\newtheorem{thm}{Theorem}[section]
\newtheorem{prop}[thm]{Proposition}
\newtheorem{lem}[thm]{Lemma}
\newtheorem{theorem}[thm]{Theorem}
\newtheorem{corollary}[thm]{Corollary}
\newtheorem{proposition}[thm]{Proposition}
\newtheorem{lemma}[thm]{Lemma}
\newtheorem*{theorem*}{Theorem}
\theoremstyle{definition}
\newtheorem*{defn*}{Definiton}
\newtheorem{example}[thm]{Example}
\newtheorem{definition}[thm]{Definition}
\newtheorem{remark}[thm]{Remark}
\newcommand{\Z}{\mathbb{Z}} 
\newcommand{\C}{\mathbb{C}} 
\newcommand{\G}{\Gamma}
\DeclareMathOperator{\interior}{int}
\newcommand{\La}{\Lambda}
\newcommand{\la}{\lambda}
\newcommand{\Sub}{\operatorname{Sub}}
\newcommand{\Stab}{\operatorname{Stab}}
\newcommand{\Rep}{\operatorname{Rep}}
\newcommand{\act}{\!\curvearrowright\!}
\newcommand{\supp}{\operatorname{supp}}
\newcommand{\cG }{\mathcal{G}}
\newcommand{\cH}{\mathcal{H}}
\newcommand{\cI}{\mathcal{I}}
\newcommand{\cP}{\mathcal{P}}
\newcommand{\one}{\boldsymbol{1}}
\renewcommand{\b}{\mathfrak{b}}
\newcommand{\Xoo}{X_{\hspace{-0.05em}\scriptscriptstyle 0}^{\hspace{-0.05em}\scriptscriptstyle 0}}
\newcommand{\X}{\bar{X}}
\title{Boundary maps and covariant representations}
\author{Mehrdad Kalantar}
\address{Mehrdad Kalantar\\ University of Houston\\ USA}
\email{mkalantar@uh.edu}
\author{Eduardo Scarparo}
\address{Eduardo Scarparo\\ Federal University of Santa Catarina\\ Brazil}
\email{eduardo.scarparo@posgrad.ufsc.br}
\thanks{MK is supported by a Simons Foundation Collaboration Grant (\# 713667). ES was partially supported by an ERCIM ‘Alain Bensoussan’ Fellowship}
\begin{document}
\maketitle
\begin{abstract}
We extend applications of Furstenberg boundary theory to the study of $C^*$-algebras associated to minimal actions $\G\act X$ of discrete groups $\G$ on locally compact spaces $X$. We introduce boundary maps on $(\G,X)$-$C^*$-algebras and investigate their applications in this context. Among other results, we completely determine when $C^*$-algebras generated by covariant representations arising from stabilizer subgroups are simple.  
We also characterize the intersection property of locally compact $\G$-spaces and simplicity of their associated crossed products.
\end{abstract}

\section{Introduction}

This work is another step in the recent rapid progress 
in understanding when $C^*$-algebras associated to groups and group actions are simple (see, e.g., \cite{KK}, \cite{BKKO}, \cite{Ken},\cite{Kaw}, \cite{KenSch}, \cite{BekKal}, \cite{KalSca20}).

In many of these results, a central technique consists of investigating equivariant ucp maps from a $C^*$-algebra that one wants to understand into some $\G$-injective $C^*$-algebra with good rigidity properties.

All the progress so far in this direction has been in the framework of compact $\G$-spaces. 
In this paper, we are interested in the $C^*$-algebras generated by covariant representations of minimal actions $\G\act X$ of discrete groups $\G$ on locally compact spaces $X$.
Our key tool is the notion of boundary maps. In case $X$ is compact, we denote by $\partial(\G,X)$ the generalized Furstenberg boundary of the pair $(\G, X)$, i.e. the spectrum of the $\G$-injective envelope of $C(X)$.  If $X$ is not compact, we define $\partial(\G,X)$ in a similar way.

\begin{definition} 
Let $X$ be a locally compact $\G$-space. By a \emph{$(\G, X)$-$C^*$-algebra} we mean a pair $(A,\iota)$, where $A$ is a $C^*$-algebra endowed with an action of $\G$ and $\iota:C_0(X)\to A$ is an equivariant injective nondegenerate $*$-homomorphism.  
A \emph{$(\G, X)$-boundary map} on $A$ is a $\G$-equivariant ccp map $\psi\colon A\to C_0(\partial(\G,X))$ such that $\psi(\iota(f))=f\circ\b_X$ for every $f\in C_0(X)$, where $\b_X\colon \partial(\G,X)\to X$ is the canonical map. 
\end{definition}

Such maps always exist (Proposition~\ref{Kaw3.3}). One of our main results is the following uniqueness result for $(\G, X)$-boundary maps on the $C^*$-algebras generated by certain covariant representations of boundary actions, called \emph{germinal representation} (see Definition~\ref{grprep})

\begin{theorem*}[Theorem~\ref{thm:uniq}]
Let $X$ be a minimal locally compact $\G$-space, and $(\pi,\rho)\in\Rep(\G, X)$ a germinal representation. Then there is a unique $(\G,X)$-boundary map on the $C^*$-algebra $C^*_{\pi\times\rho}(\G,X)$ generated by the representation $(\pi,\rho)$. 
\end{theorem*}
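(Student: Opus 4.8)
The plan is to reduce uniqueness of the boundary map to determining its values on the canonical unitaries $\pi(g)$, and then to exploit the germinal structure together with the rigidity of $\partial(\G,X)$ to force those values. The first move is to invoke the multiplicative domain: any $(\G,X)$-boundary map $\psi$ restricts on $\iota(C_0(X))$ to the $*$-homomorphism $f\mapsto f\circ\b_X$, so by Choi's theorem $\iota(C_0(X))$ lies in the multiplicative domain of $\psi$. Hence $\psi(\iota(f)\,a\,\iota(h))=(f\circ\b_X)\,\psi(a)\,(h\circ\b_X)$ for all $a\in C^*_{\pi\times\rho}(\G,X)$ and $f,h\in C_0(X)$. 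Since this algebra is densely spanned by elements $\iota(f)\pi(g)$, the map $\psi$ is completely determined by the values $\psi(\pi(g)\iota(f))$, and it suffices to show that these are independent of the choice of $\psi$.

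Second, I would analyze $\psi(\pi(g)\iota(f))$ pointwise on $\partial(\G,X)$ using equivariance and the germinal condition. For a boundary point $\omega$ with $x=\b_X(\omega)$ and $g\notin\G_x$, choosing $f$ supported near $x$ and using the covariance relation $\iota(k)\pi(g)\iota(f)=\pi(g)\iota((k\circ g)f)$ with $k$ supported near $gx\ne x$, the multiplicative-domain identity localizes the computation and reads off $\psi(\pi(g)\iota(f))(\omega)$ from the boundary data alone. When $g\in\G_x^0$ lies in the germ stabilizer, the germinal property makes $\pi(g)\iota(f)$ agree with $\iota(f)$ on the relevant spectral patch, forcing $\psi(\pi(g)\iota(f))(\omega)=f(x)$. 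The genuinely delicate regime is the nontrivial isotropy $g\in\G_x\setminus\G_x^0$, where $\pi(g)$ is separated from the scalars by the germinal representation.

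The main obstacle is to control $\psi(\pi(g)\iota(f))$ over these nontrivial-isotropy points. Here I would use the defining property that $\partial(\G,X)$ is the spectrum of the $\G$-injective envelope of $C_0(X)$: the boundary action is minimal and strongly proximal over $X$, so it is rigid in the Hamana sense, admitting no nonzero equivariant ccp perturbation fixing $\iota(C_0(X))$. Applying this rigidity to the difference $\psi_1-\psi_2$ of two boundary maps, which is an equivariant map annihilating $\iota(C_0(X))$, together with the positivity constraints coming from the germinal representation, should force the difference to vanish on each $\pi(g)\iota(f)$, yielding uniqueness. The technical heart lies in verifying that the germinal structure supplies enough compatible positivity at the isotropy points to feed into the strong-proximality argument; that is where I expect the proof to concentrate its effort.
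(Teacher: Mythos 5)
Your opening moves coincide with the paper's: the multiplicative-domain observation, the reduction to the values of $\psi$ on elements $\rho(f)\pi(g)$, and the separating-function argument at points $\omega\in\partial(\G,X)$ where $g$ actually moves $\b_X(\omega)$ (this is essentially the Cauchy--Schwarz/multiplicative-domain trick the paper borrows from Hartman--Kalantar), as well as the use of the germinal identity when $g\in\G_{\b_X(\omega)}^0$. But the step you yourself flag as the heart of the proof --- the points of nontrivial isotropy --- is where your argument breaks down. Hamana rigidity of $\cI_\G(C(X))$ constrains $\G$-equivariant ucp \emph{self-maps} of $C(\partial(\G,X))$ that fix the copy of $C_0(X)$; it says nothing about ccp maps defined on the larger algebra $C^*_{\pi\times\rho}(\G,X)$, and the difference $\psi_1-\psi_2$ of two boundary maps is not ccp, so there is no rigidity statement that applies to it. There is also a decisive structural obstruction: your rigidity argument never uses the germinal hypothesis except as vague ``positivity constraints,'' so if it worked it would prove uniqueness of boundary maps on \emph{every} $(\G,X)$-$C^*$-algebra. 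That is false: by Theorem~\ref{thm:intp}, the crossed product $C_0(X)\rtimes_r\G$ admits a unique $(\G,X)$-boundary map if and only if $\G\act\partial(\G,X)$ is topologically free. So the germinal property must enter computationally exactly at the isotropy points, not through an abstract rigidity argument.

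The idea you are missing is that the correct case division is not by the isotropy type of $x=\b_X(\omega)$ (whether $g\notin\G_x$, $g\in\G_x^0$, or $g\in\G_x\setminus\G_x^0$) but by the position of $\omega$ relative to the set $\overline{\b_X^{-1}(\interior X^g)}$; indeed, for $g\in\G_x\setminus\G_x^0$ the value $\psi(\rho(f)\pi(g))(\omega)$ genuinely takes both values $f(x)$ and $0$ depending on where $\omega$ sits over $x$, so no argument organized by the isotropy of $x$ alone can determine it. The paper's proof pins the value on $\overline{\b_X^{-1}(\interior X^g)}$ by the germinal identity (choose $h\in C_0(X)$ with $\supp h\subset X^g$ and $h(\b_X(\omega))=1$; then $\psi(\rho(f)\pi(g))\psi(\rho(h))=\psi(\rho(f)\pi(g)\rho(h))=\psi(\rho(f)\rho(h))$, and evaluating at $\omega$ gives $f(\b_X(\omega))$), and then proves a support localization (Proposition~\ref{supp}): \emph{any} boundary map sends $\pi(g)$ to a function supported in $\overline{\b_X^{-1}(\interior X^g)}$. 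The proof of that proposition is where minimality and countability of $\G$ are used: given $\omega$ outside this closure, one takes a neighborhood $U$ disjoint from $\b_X^{-1}(\interior X^g)$; by Lemma~\ref{lem:int}, $\b_X(U)$ has nonempty interior and is disjoint from $\interior X^g$, hence $U$ contains points $u$ with $\b_X(u)\notin X^g$, where your separating-function argument applies; since this works throughout every such neighborhood $U$, continuity forces $\psi(\pi(g))(\omega)=0$. Your pointwise treatment only covers boundary points whose image in $X$ is moved by $g$; the points whose image is fixed by $g$ but which lie outside $\overline{\b_X^{-1}(\interior X^g)}$ --- precisely the ones your rigidity step was meant to handle --- are never reached, and that is the gap. (One also needs Proposition~\ref{prop:ext} to pass to the one-point compactification so that Proposition~\ref{supp} applies in the non-compact case, a point your sketch does not address.)
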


As an application, we get the following generalization of the fact that the amenable radical of a group $\G$ is trivial if and only if its reduced $C^*$-algebra $C^*_r(\G)$ admits a unique tracial state if and only if $\G\act \partial_F\G$ is faithful (\cite{BKKO}, \cite{Fur}). These facts have also been generalized in \cite[Corollary 1.12]{Ur} in a different direction.

\begin{theorem*}[Theorem~\ref{faithful}]
Let $X$ be a minimal compact $\G$-space. The following conditions are equivalent:
\begin{enumerate}
\item[(i)] The action $\G\act\partial(\G,X)$ is faithful.
\item[(ii)] The canonical conditional expectation $\psi\colon C(X)\rtimes_r\G\to C(X)$ is the unique equivariant conditional expectation such that  $\psi(C^*_r(\G))\subset\C1_{C(X)}$.
\item[(iii)] For any $x\in X$, the stabilizer $\G_x$ does not contain any non-trivial amenable normal subgroup of $\G$.
\item[(iv)] There is $x\in X$ such that the open stabilizer $\G_x^0$ does not contain any non-trivial amenable normal subgroup of $\G$.
\end{enumerate}
\end{theorem*}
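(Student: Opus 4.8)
The plan is to prove the cycle (i) $\Rightarrow$ (ii) $\Rightarrow$ (iii) $\Rightarrow$ (i) together with the equivalence (iii) $\Leftrightarrow$ (iv), reducing everything to a single group-theoretic quantity: the intersection $\Rad(\G)\cap\ker(\G\act X)$ of the amenable radical with the kernel of the action on $X$. First I would record the elementary observation, valid since $X$ is minimal, that a normal subgroup $N\triangleleft\G$ contained in some stabilizer $\G_x$ automatically fixes the dense orbit $\G x$ and hence lies in $\ker(\G\act X)$. Consequently the amenable normal subgroups of $\G$ contained in some $\G_x$ are exactly those contained in $\ker(\G\act X)$, so (iii) is equivalent to $\Rad(\G)\cap\ker(\G\act X)=\{e\}$. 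Since $\ker(\G\act X)\subseteq\G_x^0\subseteq\G_x$ for every $x$, the same reduction applies verbatim to the open stabilizers, giving (iii) $\Leftrightarrow$ (iv) with essentially no further work.

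For (i) $\Leftrightarrow$ (iii) I would identify $\ker(\G\act\partial(\G,X))$ with $\Rad(\G)\cap\ker(\G\act X)$. The inclusion ``$\subseteq$'' is the clean one: the structure map $\b_X\colon\partial(\G,X)\to X$ is an equivariant surjection, and because $C(\partial(\G,X))$ is $\G$-injective, the inclusion $\mathbb C\hookrightarrow C(\partial(\G,X))$ extends along $\mathbb C\hookrightarrow C(\partial_F\G)$ to an equivariant embedding $C(\partial_F\G)\hookrightarrow C(\partial(\G,X))$, which dualizes to an equivariant surjection $\partial(\G,X)\to\partial_F\G$. Hence any $g$ acting trivially on $\partial(\G,X)$ acts trivially on both $X$ and $\partial_F\G$, and since $\ker(\G\act\partial_F\G)=\Rad(\G)$ we get $g\in\Rad(\G)\cap\ker(\G\act X)$. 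For the reverse inclusion I would invoke the relative boundary structure: an amenable normal subgroup $N$ acting trivially on $X$ preserves every fibre of $\b_X$, and by the fibrewise strong proximality of $\partial(\G,X)$ over $X$ together with normality and minimality (the usual Furstenberg argument: an $N$-fixed probability measure, pushed by $\G$, yields $N$-fixed point masses), $N$ acts trivially on $\partial(\G,X)$. This is the relative analogue of ``$\Rad(\G)$ fixes $\partial_F\G$'' and is where the boundary theory developed earlier in the paper does the work.

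The analytic heart is (i) $\Rightarrow$ (ii), which I would deduce from the uniqueness of boundary maps (Theorem~\ref{thm:uniq}). Given any equivariant conditional expectation $\psi\colon C(X)\rtimes_r\G\to C(X)$ with $\psi(C^*_r(\G))\subseteq\mathbb C1$, post-composing with $\b_X^{\,*}$ produces a $(\G,X)$-boundary map $\b_X^{\,*}\circ\psi$ on $C(X)\rtimes_r\G$ (the identity $\psi|_{C(X)}=\id$ becomes exactly the boundary-map condition). Its restriction to $C^*_r(\G)$ is a $\G$-invariant state $\tau$, determined by scalars $c_g:=\tau(\lambda_g)$ that are constant on conjugacy classes; by $C(X)$-bimodularity $\psi(f\lambda_g)=c_g f$, so $\psi$ is completely determined by $(c_g)$. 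Faithfulness of $\G\act\partial(\G,X)$ then forces $c_g=0$ for all $g\neq e$: if $g\notin\ker(\G\act X)$ one separates $\b_X(\xi)$ from $g\b_X(\xi)$ in $X$, while if $g\in\ker(\G\act X)\setminus\{e\}$ faithfulness places $g$ outside $\Rad(\G)$, so $g$ moves a point within some fibre and the fibrewise strong proximality contracts it away. Hence $\psi$ is the canonical expectation. Conversely, I would obtain (ii) $\Rightarrow$ (iii) contrapositively: if (iii) fails there is a nontrivial amenable normal $N\subseteq\ker(\G\act X)$; then $\lambda_n$ commutes with $C(X)$ for $n\in N$, the trivial character $\phi$ on $C^*_r(N)$ is a $\G$-invariant state, and $\psi':=(\id_{C(X)}\otimes\phi)\circ E$, where $E\colon C(X)\rtimes_r\G\to C(X)\rtimes_r N$ is the canonical expectation, is a second equivariant expectation with $\psi'(C^*_r(\G))\subseteq\mathbb C1$ but $\psi'(\lambda_n)=1\neq 0$, violating (ii).

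The main obstacle I anticipate is the quantitative step inside (i) $\Rightarrow$ (ii): converting faithfulness of the merely relatively strongly proximal action $\G\act\partial(\G,X)$ into the vanishing $c_g=0$. Unlike the classical case one cannot simply quote strong proximality of $\partial_F\G$, since the hypothesis is only $\Rad(\G)\cap\ker(\G\act X)=\{e\}$; the elements $g\in\ker(\G\act X)$ must be handled fibrewise, which is exactly what Theorem~\ref{thm:uniq} is designed to supply. Verifying that the boundary map $\b_X^{\,*}\circ\psi$ is covered by that theorem, that is, that it factors through or is pinned down by the germinal data, is the technical crux on which the equivalence rests.
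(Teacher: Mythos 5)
Your proposal breaks down at its self-identified ``technical crux,'' and the tool you reach for provably cannot repair it. Theorem~\ref{thm:uniq} applies only to germinal representations, and the defining representation of $C(X)\rtimes_r\G$ is \emph{not} germinal precisely in the case you need: if $g\in\ker(\G\act X)\setminus\{e\}$ then $X^g=X$, so germinality would force $\lambda_g\rho(f)=\rho(f)$ for all $f\in C(X)$, which fails in the crossed product (the canonical conditional expectation detects the difference). Worse, the conclusion you hope to import --- uniqueness of $(\G,X)$-boundary maps on $C(X)\rtimes_r\G$ --- is equivalent, by Theorem~\ref{thm:intp}, to \emph{topological freeness} of $\G\act\partial(\G,X)$, which is strictly stronger than the faithfulness hypothesis (i): Le Boudec's examples of non-$C^*$-simple groups with trivial amenable radical, taken with $X$ a point, satisfy (i)--(iv) while $C^*_r(\G)$ admits more than one boundary map. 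So no argument can derive uniqueness of boundary maps from (i). Your fallback phrase ``fibrewise strong proximality contracts it away'' is missing the actual mechanism: the map $\b_X^{\,*}\circ\psi$ is bimodular only over $C(X)$, and functions pulled back from $X$ are constant on each fibre of $\b_X$, so they cannot separate $y_0$ from $gy_0$ inside a fibre; relative strong proximality of measures does not act on $\psi$ at all. The paper's mechanism is different: extend $\b_X^{\,*}\circ\psi$ by $\G$-injectivity to a ucp equivariant map $\tilde\varphi\colon C(\partial(\G,X))\rtimes_r\G\to C(\partial(\G,X))$; by \emph{rigidity} of the injective envelope, $\tilde\varphi$ restricts to the identity on $C(\partial(\G,X))$, hence is a conditional expectation, i.e.\ bimodular over the boundary algebra; then Proposition~\ref{supp} --- which, unlike Theorem~\ref{thm:uniq}, requires no germinal hypothesis --- applied to the minimal $\G$-space $\partial(\G,X)$ itself (using $\partial(\G,\partial(\G,X))=\partial(\G,X)$) gives $\supp\tilde\varphi(\lambda_g)\subset\partial(\G,X)^g\subsetneq\partial(\G,X)$, and since $\tilde\varphi(\lambda_g)=c_g1$ is constant, $c_g=0$.

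The remainder of your plan is essentially sound, with one repairable slip. The reduction of (iii) and (iv) to $\Rad(\G)\cap\ker(\G\act X)=\{e\}$ via minimality is correct, and your (ii)$\Rightarrow$(iii) construction is valid --- indeed it produces exactly the same expectation as the paper's ($f\lambda_g\mapsto f$ for $g\in N$, $0$ otherwise), built through $C(X)\otimes C^*_r(N)$ instead of through the quotient $C(X)\rtimes_r(\G/N)$. For the inclusion $\ker(\G\act\partial(\G,X))\subset\Rad(\G)\cap\ker(\G\act X)$ (which is what (iii)$\Rightarrow$(i) needs), your route via an ``equivariant surjection $\partial(\G,X)\to\partial_F\G$'' has a gap: a ucp equivariant map $C(\partial_F\G)\to C(\partial(\G,X))$ obtained from injectivity does not dualize to a map on points unless you first prove it is multiplicative. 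This is avoidable: the kernel of $\G\act\partial(\G,X)$ is contained in a stabilizer $\G_y$, $y\in\partial(\G,X)$, hence amenable by Proposition~\ref{Kaw3.3}(iii), and it lies in $\ker(\G\act X)$ since $\b_X$ is an equivariant surjection; this one-line argument is exactly the paper's proof of (iv)$\Rightarrow$(i). Finally, your reverse inclusion $\Rad(\G)\cap\ker(\G\act X)\subset\ker(\G\act\partial(\G,X))$ (the fibrewise Furstenberg argument, made precise via Theorem~\ref{thm:genna}(ii): an $N$-invariant measure on a fibre pushed to a point mass, plus normality and minimality) is correct, but note that in your cycle it is only used inside the flawed step (i)$\Rightarrow$(ii), where in fact plain faithfulness already gives you a moved point in a fibre without any appeal to the radical.
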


This result should be seen as a complement to the characterization by Kawabe in \cite{Kaw} of topological freeness of the action $\G\act \partial(\G,X)$ in terms of properties of the crossed product $C(X)\rtimes_r\G$ and in terms of the dynamical behavior of the stabilizer subgroups.

Given a boundary action $\G\act X$, the authors investigated in \cite{KalSca20} properties of $C^*$-algebras generated by quasi-regular representations arising form stabilizers of the action. In the more general case in which $\G\act X$ is just a minimal action, many of the techniques of \cite{KalSca20} can still be applied, but the conclusions hold for \emph{covariant representations} of $(\G,X)$ instead of \emph{unitary representations} of $\G$.

In this direction, our main result is the following complete characterization of simplicity of $C^*$-algebras of such representations. Given an action on a locally compact space $\G\act X$ and $x\in X$, let $\rho_x\colon C_0(X)\to B(\ell^2(\G x))$ be the map defined by $\rho_x(f)(\delta_{y}):=f(y)\delta_y$ and $\la_{\G/\G_x}\colon\G\to B(\ell^2(\G x))$ be the unitary representation determined by $\la_{\G/\G_x}(g)(\delta_y):=\delta_{gy}$.

\begin{theorem*}[Theorem~\ref{thm:gk}]
Let $X$ be a locally compact space and $\G\act X$ be a minimal action. Given $x\in X$, the $C^*$-algebra $C^*_{\la_{\G/\G_x}\times\rho_x}(\G,X)$ is simple if and only if $\frac{\G_x}{\G_x^0}$ is amenable.
\end{theorem*}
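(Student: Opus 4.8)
The plan is to realize $A:=C^*_{\la_{\G/\G_x}\times\rho_x}(\G,X)$ (acting on $\ell^2(\G x)$) as a $(\G,X)$-$C^*$-algebra, and to show that $A$ is simple if and only if its unique boundary map is faithful, thereby reducing the theorem to the equivalence between this faithfulness and amenability of the germ group $N:=\G_x/\G_x^0$. First I would check that $(A,\rho_x)$ is a $(\G,X)$-$C^*$-algebra: minimality makes $\G x$ dense in $X$, so $\rho_x$ is equivariant, injective and nondegenerate, with image the multiplications by $f|_{\G x}$. I would then verify that $\la_{\G/\G_x}\times\rho_x$ is germinal in the sense of Definition~\ref{grprep}---this is the motivating example, since any $g\in\G_x^0$ fixes a neighborhood $U$ of $x$ and hence $\la_{\G/\G_x}(g)$ fixes $\delta_y$ for all $y\in U\cap\G x$. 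Theorem~\ref{thm:uniq} then yields a unique $(\G,X)$-boundary map $\psi\colon A\to C_0(\partial(\G,X))$, which is my main tool. Note $\psi$ is injective on $\rho_x(C_0(X))$, since there $\psi(\rho_x(f))=f\circ\b_X$ and $\b_X$ is onto $X$. Consequently, if $\psi$ is not faithful then $\ker\psi$ is a nonzero proper ideal meeting $\rho_x(C_0(X))$ trivially, whereas if $\psi$ is faithful then, combined with minimality, it forces the intersection property and hence simplicity. Thus $A$ is simple if and only if $\psi$ is faithful.

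For the direction $N$ amenable $\Rightarrow$ $\psi$ faithful, the idea is to remove the isotropy by passing to the open stabilizer. Since $\G_x^0\triangleleft\G_x$ has amenable quotient $N$, inducing the weak containment $1_N\prec\la_N$ (of the regular representation $\la_N$ of $N$) in stages gives $\la_{\G/\G_x}\prec\la_{\G/\G_x^0}$, hence a surjection $C^*_{\la_{\G/\G_x^0}\times\rho_x}(\G,X)\twoheadrightarrow A$. Because every element of $\G_x^0$ already fixes a neighborhood of $x$, the representation $\la_{\G/\G_x^0}\times\rho_x$ is germ-free, and I expect its boundary map to be faithful outright; simplicity of the germ-free algebra then passes to its nonzero quotient $A$. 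More intrinsically, amenability of $N$ provides an invariant mean for the $N$-action on the fiber $\b_X^{-1}(x)$, which I would use to average $\psi$ into an equivariant conditional expectation onto $\rho_x(C_0(X))$, from which faithfulness (and the intersection property) follows directly.

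For the converse I argue contrapositively: if $N$ is non-amenable, I would show $\psi$ is not faithful, so that $\ker\psi$ is a nonzero proper ideal and $A$ is not simple. The obstruction lives in the isotropy: $\delta_x$ is fixed by every $\la_{\G/\G_x}(g)$ with $g\in\G_x$, so the representation carries the trivial character of $\G_x$, which factors through $N$; by Hulanicki's theorem non-amenability gives $1_N\not\prec\la_N$. I expect this weak separation of the trivial character from the regular representation of $N$ to force the boundary map to collapse part of the isotropy, yielding $\ker\psi\neq\{0\}$.

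The main obstacle is precisely this equivalence between amenability of $N$ and faithfulness of the unique boundary map. In the amenable case the crux is the averaging construction: building the equivariant conditional expectation onto $\rho_x(C_0(X))$ from an invariant mean on the (non-compact) fiber, and controlling it against the diagonal of $A$, which is strictly larger than $\rho_x(C_0(X))$ precisely because it records the isotropy. In the non-amenable case the crux is promoting the weak separation $1_N\not\prec\la_N$ from a statement about states into an honest nonzero kernel of $\psi$; turning this weak-containment information into an actual ideal of $A$ is where I expect the real work to lie.
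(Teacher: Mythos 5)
Your reduction is sound and matches the paper's machinery: $(\la_{\G/\G_x},\rho_x)$ is germinal (Example~\ref{ex:grprep}), Theorem~\ref{thm:uniq} gives a unique $(\G,X)$-boundary map $\psi$ on $A:=C^*_{\la_{\G/\G_x}\times\rho_x}(\G,X)$, and Propositions~\ref{prop:ideal1} and~\ref{prop:ideal} indeed yield that $A$ is simple if and only if $\psi$ is faithful. The genuine gap is in the direction ``$\G_x/\G_x^0$ amenable $\Rightarrow$ simple''. Your plan rests on the claim that $C^*_{\la_{\G/\G_x^0}\times\rho_x}(\G,X)$ is simple for the given $x$ (``I expect its boundary map to be faithful outright''), so that simplicity passes to its quotient $A$. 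That claim is exactly the hard point, and it is not available for arbitrary $x$: the paper proves it only for $x\in\Xoo$, the continuity points of $x\mapsto\G_x^0$ (Corollary~\ref{cor:os}). The obstruction is that for $y\in\b_X^{-1}(x)$ the GNS representation of $\delta_y\circ\psi$ is $\la_{\G/\La_y}\times\rho_x$, where $\La_y=\{g\in\G: y\in\overline{\b_X^{-1}(\interior X^g)}\}$, and at a discontinuity point of the open-stabilizer map one can have $\La_y\supsetneq\G_x^0$ for \emph{every} $y$ in the fibre (e.g.\ for Thompson's $F$ at a dyadic $x$, every $\La_y$ contains all elements trivial on a one-sided neighbourhood of $x$). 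Since $I_\psi=\bigcap_y\ker(\mathrm{GNS}_{\delta_y\circ\psi})$, faithfulness of the boundary map on the $\G_x^0$-algebra would force $(\la_{\G/\G_x^0},\rho_x)$ to be weakly equivalent to these $\La_y$-representations --- this is precisely the singular-ideal phenomenon for non-Hausdorff groupoids of germs, and nothing in your sketch rules out its failure. The paper's proof avoids $\G_x^0$ altogether: Proposition~\ref{prop:la} proves simplicity of $C^*_{\la_{\G/\La_y}\times\rho_x}(\G,X)$ (every germinal representation GNS-quotients onto it, and everything weakly contained in a germinal representation is germinal), and amenability of $\G_x/\G_x^0$ is used only to see that $\La_y$ is co-amenable in $\G_x$, whence $(\la_{\G/\G_x},\rho_x)\prec(\la_{\G/\La_y},\rho_x)$ by Proposition~\ref{prop:prec} and $A$ inherits simplicity. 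Your averaging alternative (invariant mean on the fibre giving a conditional expectation onto $\rho_x(C_0(X))$, ``from which faithfulness follows directly'') is not an argument either: faithfulness of such an expectation is again the whole problem.

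The converse also needs more than you give: ``$1_N\not\prec\la_N$ should force the boundary map to collapse part of the isotropy'' states the desired conclusion, not a mechanism for producing an element of $\ker\psi$. The paper's route (Lemma~\ref{aux}) runs the implication the other way: simplicity forces the unique boundary map to be faithful; faithfulness yields, via the GNS representations at boundary points, the weak containment $(\la_{\G/\G_x},\rho_x)\prec(\la_{\G/\G_x^0},\rho_x^0)$; and then Proposition~\ref{prop:prec} --- which is the covariant version of the Hulanicki-type fact you invoke --- converts this weak containment into amenability of $\G_x/\G_x^0$. Note finally that a group-level weak containment $\la_{\G/\G_x}\prec\la_{\G/\G_x^0}$ (obtained by inducing in stages) does not by itself produce a $*$-homomorphism between the covariant algebras: both here and for your surjection $C^*_{\la_{\G/\G_x^0}\times\rho_x}(\G,X)\twoheadrightarrow A$ you need the covariant statement, and that is exactly what Proposition~\ref{prop:prec} supplies.
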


We also investigate the intersection property and simplicity of crossed products associated to actions on locally compact spaces. In this direction, our main result is the following, which is based upon and generalizes Kawabe's criterion for simplicity of crossed products associated to minimal compact $\G$-spaces.
\begin{theorem*}[Theorem~\ref{thm:scp}]
Let $X$ be a minimal locally compact $\G$-space. The following conditions are equivalent:
\begin{enumerate}
\item[(i)] The reduced crossed product $C_0(X)\rtimes_r\G$ is simple.
\item[(ii)] For every $x,z\in X$ and every amenable subgroup $\La$ of $\G_x$, there exists a net $(g_i)\subset\G$ such that $g_i\La g_i^{-1}\to\{e\}$ and $g_ix\to z$. 
\item[(iii)]There exists $x\in X$ such that for every amenable subgroup $\La$ of $\G_x$, there exists a net $(g_i)\subset\G$ such that $g_i\La g_i^{-1}\to\{e\}$ and $g_i x$ converges to a point in $X$.
\end{enumerate}
\end{theorem*}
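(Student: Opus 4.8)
The plan is to prove the cycle (ii)~$\Rightarrow$~(iii)~$\Rightarrow$~(i)~$\Rightarrow$~(ii), with the substance concentrated in the last two implications; the implication (ii)~$\Rightarrow$~(iii) is an immediate specialization (take the universally quantified $x$ as the witness for (iii) and pick any $z\in X$, whence $g_ix\to z\in X$ shows $g_ix$ converges in $X$). The first move in every direction is to trade simplicity for the intersection property: since $\G\act X$ is minimal, the only $\G$-invariant ideals of $C_0(X)$ are $0$ and $C_0(X)$, so for any nonzero ideal $J\trianglelefteq C_0(X)\rtimes_r\G$ the $\G$-invariant ideal $J\cap C_0(X)$ is $0$ or all of $C_0(X)$, and in the latter case nondegeneracy of $\iota$ forces $J=C_0(X)\rtimes_r\G$. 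Hence $C_0(X)\rtimes_r\G$ is simple if and only if it has the intersection property (every nonzero ideal meets $C_0(X)$ nontrivially), and I would run the whole argument in these terms.

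The engine is a faithfulness criterion for boundary maps, in the spirit of Theorem~\ref{thm:uniq} and the expectation rigidity of Theorem~\ref{faithful}, generalizing Kawabe's criterion to the non-unital setting. Let $\Psi\colon C_0(X)\rtimes_r\G\to C_0(\partial(\G,X))$ be a $(\G,X)$-boundary map, which exists by Proposition~\ref{Kaw3.3} and plays the role of a pseudo-expectation into the $\G$-injective envelope $C_0(\partial(\G,X))$ of $C_0(X)$. I would first show that the intersection property is equivalent to faithfulness of $\Psi$: if an ideal $J$ satisfies $J\cap C_0(X)=0$, then $C_0(X)$ still embeds into $(C_0(X)\rtimes_r\G)/J$ with the same $\G$-injective envelope, so by injectivity $\Psi$ factors through the quotient and $J\subseteq\ker\Psi$; conversely $\ker\Psi$ itself meets $C_0(X)$ trivially since $\Psi|_{\iota(C_0(X))}=\b_X^*$ is injective. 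The second half of the engine is a dictionary translating faithfulness of $\Psi$ into the dynamics of stabilizers $\G_y$ of boundary points $y\in\partial(\G,X)$ lying over $x=\b_X(y)$: faithfulness of $\Psi$ fails exactly when some amenable subgroup $\La\leq\G_x$ persists as a nontrivial germ in the fibre of $\partial(\G,X)$ over a point $z$, and this persistence is precisely the negation of the existence of a net $(g_i)$ with $g_i\La g_i^{-1}\to\{e\}$ in $\Sub(\G)$ and $g_ix\to z$.

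For (i)~$\Rightarrow$~(ii) I would argue by contraposition: supposing (ii) fails, fix $x,z\in X$ and an amenable $\La\leq\G_x$ admitting no such net. Amenability furnishes a $\La$-invariant mean, which I would use to average the canonical unitaries $u_\la$ ($\la\in\La$) into an element of $C_0(X)\rtimes_r\G$; the non-recurrence of $\La$ relative to $z$ guarantees that $\La$ survives as a nontrivial stabilizer germ over $z$, so this element is nonzero yet lies in $\ker\Psi$, making $\Psi$ non-faithful and hence violating the intersection property (so $C_0(X)\rtimes_r\G$ is not simple). For (iii)~$\Rightarrow$~(i) I would show $\Psi$ faithful using only the single base point $x_0$ of (iii) together with minimality. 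If $a\geq0$ satisfies $\Psi(a)=0$ but $a\neq0$, then faithfulness of the canonical expectation $E$ forces $E(a)\neq0$, so the off-diagonal terms of $a$, supported on nontrivial stabilizer elements, must cancel the diagonal inside $\Psi$. Translating the support of $E(a)$ near $x_0$ by minimality, the surviving off-diagonal elements form a nontrivial amenable germ $\La\leq\G_{x_0}$ (amenable because boundary stabilizer germs are, cf. the role of $\G_x/\G_x^0$ in Theorem~\ref{thm:gk}); applying (iii) to push $\La$ toward $\{e\}$ along $(g_i)$ with $g_ix_0$ convergent in $X$ kills the off-diagonal contributions in the limit while preventing mass from escaping to infinity, forcing $\Psi(a)$ to equal a nonzero diagonal value, a contradiction.

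The main obstacle I anticipate is twofold, both symptoms of working over a locally compact rather than compact $X$. First, one must keep every boundary map genuinely valued in $C_0(\partial(\G,X))$ and rule out escape of mass: the hypotheses insist that $g_ix$ converges \emph{to a point of $X$} rather than to infinity in a compactification, and it is exactly this properness that makes the disintegration of $\Psi$ over boundary fibres and the averaging step in (i)~$\Rightarrow$~(ii) delicate in the non-unital setting. Second, the upgrade from the single base point of (iii) to all pairs $(x,z)$ in (ii) is not a direct dynamical transport of stabilizers, which need not vary continuously along orbits; it is instead routed through the global statement via the cycle (iii)~$\Rightarrow$~(i)~$\Rightarrow$~(ii), and the crux is verifying that recurrence at one point already certifies faithfulness of $\Psi$ on all of $C_0(X)\rtimes_r\G$, using minimality of the stabilizer structure on $\partial(\G,X)$.
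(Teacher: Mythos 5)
Your opening reduction (simplicity $\iff$ intersection property, via minimality and nondegeneracy) and the cyclic structure of implications agree with the paper, and you correctly identify that properness/convergence in $X$ is the new feature of the non-compact setting. However, the engine of your argument has genuine gaps, and they sit exactly where the paper puts all of the real work. The paper proves this theorem in a few lines by quoting Theorem~\ref{thm:intp}: the intersection property holds if and only if every closed $\G$-invariant subset $Y\subset\mathcal{S}_a(X,\G)$ with $p_X(Y)=X$ contains $X\times\{e\}$ (this is the locally compact extension of Kawabe's characterization, obtained via the one-point compactification and Lemma~\ref{lem:intp}). With that in hand, (i)$\implies$(ii) is just: take $Y=\overline{\G\cdot(x,\La)}$, note $p_X(Y)=X$ since $p_X$ is closed and $X$ is minimal, and read off the net from $(z,\{e\})\in Y$; while (iii)$\implies$(i) is the observation that any admissible $Y$ meets $X\times\{e\}$ and is then all of it by minimality. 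Your proposal never invokes (or reconstructs) this characterization; instead it asserts the equivalent ``dictionary'' between faithfulness of boundary maps and subgroup dynamics as if it were available, which amounts to assuming the hard theorem.

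Three concrete failures. First, your claim that ``the intersection property is equivalent to faithfulness of $\Psi$'' for a fixed boundary map $\Psi$ is false as stated: composing the canonical conditional expectation $E\colon C_0(X)\rtimes_r\G\to C_0(X)$ with the pullback along $\b_X$ always yields a \emph{faithful} boundary map, regardless of simplicity. The correct statement is that \emph{every} boundary map is faithful (using Proposition~\ref{prop:ideal} to see that $I_\psi$ is an ideal meeting $C_0(X)$ trivially), and your justification ``by injectivity $\Psi$ factors through the quotient'' conflates the existence of \emph{some} boundary map on $(C_0(X)\rtimes_r\G)/J$ (Proposition~\ref{Kaw3.3}) with a factorization of the \emph{fixed} map $\Psi$, which injectivity does not give. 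Second, in your contrapositive proof of (i)$\implies$(ii), averaging the canonical unitaries $u_\la$, $\la\in\La$, against a $\La$-invariant mean does not produce an element of the reduced crossed product: such averages converge at best weak$^*$ in the bidual, never in norm in general. Producing, from a non-recurrent amenable $\La\leq\G_x$, a nonzero ideal that misses $C_0(X)$ (equivalently, a non-canonical boundary map or a non-faithful covariant representation weakly contained in the regular one, in the spirit of Proposition~\ref{prop:prec}) is precisely the hard direction of Kawabe's theorem and is missing from your sketch. Third, in (iii)$\implies$(i), the phrase ``the surviving off-diagonal elements form a nontrivial amenable germ $\La\leq\G_{x_0}$'' has no justification: the off-diagonal support of a positive element $a$ with $\Psi(a)=0$ is merely a subset of $\G$, not a subgroup, and nothing in your argument forces amenability; converting failure of faithfulness into the dynamics of a single amenable subgroup is, again, the content of the theorem you are trying to prove. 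To repair the proposal you would either have to quote Theorem~\ref{thm:intp} (at which point the paper's short argument is available) or reprove it, i.e., carry out Kawabe's construction and its reduction to the compact case.
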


Given a topologically transitive compact $\G$-space $X$,  in Theorem \ref{thm:genna} we give a dynamical characterization of $\partial(\G,X)$. This generalizes the work of Naghavi \cite{Nag}, who provided such a characterization for compact minimal $\G$-spaces. Even if one is only interested in the case of minimal actions, topological transitivity shows up naturally when one considers non-compact minimal spaces, since the one-point compactification is topologically transitive and non-minimal.  In particular, in Theorem \ref{thm:bnd-transit} we use Theorem \ref{thm:genna} for describing $\partial(\G,X)$ when $X$ is a discrete transitive $\G$-space.

This work is organized as follows. In Section 2, we collect some preliminaries about topological dynamics and covariant representations.  In Section 3, we investigate the notions of boundaries and boundary maps. We also generalize the notion of Furstenberg boundary of a $\G$-space $X$ to the case in which $X$ is locally compact.  In Section 4, we investigate boundary maps in more detail, specially aiming for uniqueness results. In Section 5, we determine conditions for simplicity of certain $C^*$-algebras associated to minimal actions.  Finally, in Section 6 we investigate the intersection property and simplicity of crossed products associated to actions on locally compact spaces.

\textbf{Acknowledgements.}
We thank the referee for their careful reading of the paper and their helpful comments.

\section{Preliminaries}

\subsection{Topological dynamics}
Throughout the paper $\G$ is a discrete group, and $\G\act X$ denotes an action of $\G$ by homeomorphisms on a locally compact Hausdorff space $X$. In this case we say $X$ is a \emph{locally compact $\G$-space}.  If $X$ is not compact, we denote by $\X$ its one-point compactification, and regard it as a $\G$-space in a natural way. If $X$ is compact, then we let $\X:=X$.

Given a locally compact $\G$-space $X$ and $x\in X$, the \emph{stabilizer subgroup} of $x$ is $\G_x:=\{g\in\G:gx=g\}$; and the \emph{open stabilizer} is the subgroup $$\G_x^0:=\{g\in \G : \text{ $g$ fixes an open neighborhood of $x$}\}.$$ 
Notice that $\G_x^0$ is a normal subgroup of $\G_x$.

We denote by $X^g$ the fixed set of $g\in\G$, that is, the set $X^g:=\{x\in X : gx=x\}$. Note that $g\in \G_x$ if and only if $x\in X^g$, and $g\in \G_x^0$ if and only if $x\in \interior X^g$.

We recall the following standard terminology for a given action $\G\act X$; we say
\begin{itemize}
\item
the action is \emph{minimal}, or $X$ is a minimal $\G$-space, if $X$ has no non-empty proper closed $\G$-invariant subsets;

\item
the action is \emph{topologically transitive}, or $X$ is a topologically transitive $\G$-space, if there is $x\in X$ with dense orbit;

\item
the action is \emph{topologically free}, or $X$ is a topologically free $\G$-space, if $\interior X^g =\emptyset$ for every non-trivial $g\in \G$;
\item
the action is \emph{faithful}, or $X$ is a faithful $\G$-space, if $X^g \neq X$ for every non-trivial $g\in \G$.
\end{itemize}

Denote by $\Sub(\G)$ the set of subgroups of $\G$, endowed with the Chabauty topology; this is the restriction to $\Sub(\G)$ of the product topology on $\{0, 1\}^\G$, where 
every subgroup $\La\in \Sub(G)$ is identified with its characteristic function $\one_\La \in \{0, 1\}^\G$. Given a locally compact $\G$-space $ X$, let $\Xoo\subset X$ be the set of points at which the map $\Stab^0: X \to \Sub(\G)$ given by $x\mapsto \G_x^0$ is continuous. If $\G$ is countable, then $\Xoo$ is dense in $X$ (\cite[Proposition 2.1]{KalSca20} or,  for a similar statement,  \cite{Vor}, \cite{GW}).

\subsection{Covariant representations}

Given a locally compact $\G$-space $X$, a \emph{nondegenerate covariant representation} of $(\G,X)$ is a pair $(\pi,\rho)$, where $\pi$ is a unitary representation of $\G$ and $\rho\colon C_0(X)\to B(\cH_\pi)$ is a $\G$-equivariant $*$-homomorphism, which is nondegenerate in the sense that $\overline{\mathrm{span}}\{\rho(f)\xi:f\in C_0(X),\xi\in\cH_\pi\}=\cH_\pi$. In this case, we let $C^*_{\pi\times\rho}(\G,X):=\overline{\mathrm{span}}\{\rho(f)\pi(g):f\in C_0(X),g\in\G\}$. We denote by $\Rep(\G,X)$ the family of all nondegenerate covariant representation of $(\G, X)$.

Given $(\pi_1,\rho_1), (\pi_2,\rho_2) \in \Rep(\G,X)$, we say that $(\pi_1,\rho_1)$ is \emph{weakly contained} in $(\pi_2,\rho_2)$, and write $(\pi_1,\rho_1)\prec(\pi_2,\rho_2)$ if there is a $*$-homomorphism $C^*_{\pi_2\times\rho_2}(\G,X)\to C^*_{\pi_1\times\rho_1}(\G,X)$ which maps $\rho_2(f)\pi_2(g)$ to $\rho_1(f)\pi_1(g)$, for every $g\in\G$ and $f\in C_0(X)$.

We are particularly interested in the following class of representations. Given $x\in X$ and $H\leq\G_x$ there is a canonical covariant representation $(\la_{\G/H}, \rho_x)\in\Rep(\G,X)$ on $B(\ell^2(\G/H))$ defined by 
\begin{equation}\label{Poisson}
\rho_x(f)(\delta_{gH}) = f(gx)\delta_{gH}
\end{equation}
 and $\la_{\G/H}(k)(\delta_{gH}) = \delta_{kgH}$ for $f\in C_0(X)$ and $g,k\in\G$. 

\begin{example}
Let $X$ be a locally compact $\G$-space and $x\in X$ a point with dense orbit. Then $C_0(X)\rtimes_r\G=C^*_{\la_\G\times\rho_x}(\G,X)$. Indeed, let $E\colon B(\ell^2(\G))\to \ell^\infty(\G)$ be the canonical conditional expectation. Given $g\in \G$ and $f\in C_0(X)$, we have that 
\begin{equation*}
E(\rho_x(f)\la_\G(g))=
\begin{cases}
0, &\text{if $g\in\G\setminus\{e\}$} \\
\rho_x(f), &\text{if $g=e$} .
\end{cases}
\end{equation*}
Since $x$ has dense orbit, we also have that $\rho_x$ is injective. From the fact that $E$ is faithful, we conclude that $C_0(X)\rtimes_r\G=C^*_{\la_\G\times\rho_x}(\G,X)$ (see e.g.  \cite[Proposition 19.8]{Exel} for how this conclusion follows from faithfulness of $E$).
\end{example}

\begin{proposition}\label{prop:prec}
Let $X$ be a locally compact $\G$-space. Given $x\in X$ and $\La_1\leq\La_2\leq\G_x$, let, for $i=1,2$, $\rho_x^i\colon C_0(X)\to B(\ell^2(\G/\La_i)$ be defined as in \eqref{Poisson}. Then $\La_1$ is co-amenable in $\La_2$ if and only if $(\la_{\G/\La_2},\rho^2_x)\prec(\la_{\G/\La_1},\rho^1_x)$.
\end{proposition}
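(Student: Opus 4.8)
The plan is to recognize both covariant representations as induced from the common base point $x$ and thereby reduce the statement to the classical characterization of co-amenability: $\Lambda_1$ is co-amenable in $\Lambda_2$ if and only if the trivial representation $1_{\Lambda_2}$ is weakly contained in the quasi-regular representation $\lambda_{\Lambda_2/\Lambda_1}$. Since $\Lambda_i\leq\G_x$, the point evaluation $\mathrm{ev}_x\colon C_0(X)\to\C$ together with the trivial representation of $\Lambda_i$ is a one-dimensional covariant representation of the restricted system $(\Lambda_i,X)$, and one checks directly that $(\la_{\G/\Lambda_i},\rho_x^i)$ is unitarily equivalent to $\Ind_{\Lambda_i}^\G(\mathrm{ev}_x)$. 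By transitivity of induction, $(\la_{\G/\Lambda_1},\rho_x^1)\cong\Ind_{\Lambda_2}^\G\tau$, where $\tau=(\la_{\Lambda_2/\Lambda_1},\mathrm{ev}_x\cdot\id)$ is the covariant representation of $(\Lambda_2,X)$ on $\ell^2(\Lambda_2/\Lambda_1)$ whose $C_0(X)$-part is the scalar character $\mathrm{ev}_x$ — all cosets sit over the fixed point $x$ — while $(\la_{\G/\Lambda_2},\rho_x^2)\cong\Ind_{\Lambda_2}^\G\sigma$ with $\sigma=(1_{\Lambda_2},\mathrm{ev}_x)$. The key simplification is that both $\sigma$ and $\tau$ have $C_0(X)$ acting through the single $\Lambda_2$-fixed character $\mathrm{ev}_x$, so their integrated forms factor through $C^*(\Lambda_2)$ and the covariant weak containment $\sigma\prec\tau$ is literally $1_{\Lambda_2}\prec\la_{\Lambda_2/\Lambda_1}$.

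For the forward implication I would invoke continuity of induction with respect to weak containment: co-amenability gives $\sigma\prec\tau$, and applying $\Ind_{\Lambda_2}^\G$ yields the desired weak containment. I would, however, present this concretely rather than abstractly. Take unit vectors $\xi_n\in\ell^2(\Lambda_2/\Lambda_1)\subseteq\ell^2(\G/\Lambda_1)$ that are almost $\Lambda_2$-invariant, and form the vector states $\omega_n=\langle(\la_{\G/\Lambda_1}\times\rho_x^1)(\cdot)\xi_n,\xi_n\rangle$. A short orbit computation shows $\omega_n(\rho_x^1(f)\la_{\G/\Lambda_1}(g))$ vanishes identically for $g\notin\Lambda_2$ (the supports of $\la(g)\xi_n$ and $\xi_n$ are disjoint), and converges to $f(x)$ for $g\in\Lambda_2$ using $sx=x$ and almost-invariance. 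Thus $\omega_n\to\phi_2$, the cyclic state of $(\la_{\G/\Lambda_2},\rho_x^2)$ at $\delta_{e\Lambda_2}$; since this vector is cyclic, weak-$*$ convergence of the $\omega_n$ produces the required $*$-homomorphism $C^*_{\la_{\G/\Lambda_1}\times\rho_x^1}(\G,X)\to C^*_{\la_{\G/\Lambda_2}\times\rho_x^2}(\G,X)$.

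The reverse implication is the real work, and the step I expect to be the main obstacle. Assuming the $*$-homomorphism exists, I would pull back the cyclic state $\phi_2$ to obtain vectors $\xi_n\in\ell^2(\G/\Lambda_1)$ with $\langle\rho_x^1(f)\la_{\G/\Lambda_1}(g)\xi_n,\xi_n\rangle\to f(x)\mathbf1_{\Lambda_2}(g)$ for all $f,g$. Taking $g=e$ shows that the measures $\nu_n=\sum_{h\Lambda_1}|\xi_n(h\Lambda_1)|^2\,\delta_{hx}$ converge weak-$*$ to $\delta_x$, so the mass of $\xi_n$ concentrates over the base point; taking $g\in\Lambda_2$ with an approximate unit $f$ shows the $\xi_n$ are asymptotically $\Lambda_2$-invariant. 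The difficulty is to convert these two facts into almost-invariant vectors for $\la_{\Lambda_2/\Lambda_1}$, i.e. to use the concentration at $x$ supplied by the $\rho_x$-component to confine the asymptotically invariant mass to the fibre $\Lambda_2/\Lambda_1$ over $x$ and prevent it from escaping to other $\Lambda_2$-orbits in $\G/\Lambda_1$. This is exactly where the covariant (rather than purely unitary) nature of the representations is essential — for plain quasi-regular representations weak containment is not reflected by induction, and it is the base point $x$, seen through $C_0(X)$, that rescues the argument. Concretely I would pass to a weak-$*$ cluster point of the states $|\xi_n|^2$ to build a $\Lambda_2$-invariant mean on $\ell^\infty(\Lambda_2/\Lambda_1)$, and the heart of the proof is to verify, via the convergence $\nu_n\to\delta_x$, that this mean is supported on the fibre and hence witnesses co-amenability of $\Lambda_1$ in $\Lambda_2$.
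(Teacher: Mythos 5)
Your forward implication (co-amenability $\Rightarrow$ weak containment) is correct and is, in substance, the paper's own argument: the paper compresses by the projection $P$ of $\ell^2(\G/\La_1)$ onto $\ell^2(\La_2/\La_1)$, notes that $P\rho_x^1(f)\la_{\G/\La_1}(g)P$ equals $f(x)\la_{\La_2/\La_1}(g)$ for $g\in\La_2$ and $0$ otherwise, composes with the state on $C^*_{\la_{\La_2/\La_1}}(\La_2)$ supplied by co-amenability, and identifies the GNS representation of the composite with $(\la_{\G/\La_2},\rho_x^2)$. Your almost-invariant vector states $\omega_n$ converge to exactly that functional, $\rho_x^1(f)\la_{\G/\La_1}(g)\mapsto f(x)\one_{\La_2}(g)$, and your GNS step is the same as the paper's; the two proofs differ only in packaging.

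The reverse implication is where your proposal has a genuine gap, and you flag it yourself: you never show that a cluster point of the densities $|\xi_n|^2$ yields a mean supported on $\La_2/\La_1$, and the route you sketch cannot be completed. First, weak-$*$ convergence $\nu_n\to\delta_x$ only says the mass of $\xi_n$ sits over cosets $g\La_1$ with $gx$ \emph{close} to $x$; in the limit nothing prevents all of the asymptotically invariant mass from living over cosets with $gx\neq x$, so the limiting mean may give the fibre measure zero. Second, the fibre over $x$ is $\{g\La_1: gx=x\}=\G_x/\La_1$, not $\La_2/\La_1$; when $\La_2\subsetneq\G_x$, even exact concentration on the fibre would only produce a $\La_2$-invariant mean on $\G_x/\La_1$, which can live entirely on $\La_2$-orbits other than $\La_2/\La_1$ and so does not witness co-amenability of $\La_1$ in $\La_2$. (A smaller point: pulling back $\phi_2$ gives a weak-$*$ limit of \emph{convex combinations} of vector states, not single vector states, though this does not affect the structure of your argument.)

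For comparison, the paper's proof of this direction is entirely different and much shorter: using an approximate unit $(u_i)\subset C_0(X)$ it deduces the unitary weak containment $\la_{\G/\La_2}\prec\la_{\G/\La_1}$ from the estimate $\|\la_{\G/\La_2}(a)\|=\sup_i\|\rho_x^2(u_i)\la_{\G/\La_2}(a)\|\leq\sup_i\|\rho_x^1(u_i)\la_{\G/\La_1}(a)\|=\|\la_{\G/\La_1}(a)\|$ for $a\in\C\G$, and then asserts that this forces $\La_1$ to be co-amenable in $\La_2$. You should be aware that this asserted implication fails for general $\La_1\leq\La_2$: take $\G=\Lambda\wr\Z$ with $\Lambda$ non-amenable, $\La_1=\bigoplus_{n\geq1}\Lambda$, $\La_2=\bigoplus_{n\in\Z}\Lambda$, and $t$ the shift; then $t^{-m}\La_1t^{m}$ increases to $\La_2$, so $\one_{\La_2}$ is a pointwise limit of diagonal coefficients $\one_{t^{-m}\La_1t^m}$ of $\la_{\G/\La_1}$ and hence $\la_{\G/\La_2}\prec\la_{\G/\La_1}$, while $\La_1\trianglelefteq\La_2$ with $\La_2/\La_1\cong\bigoplus_{n\leq0}\Lambda$ non-amenable. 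Since for a one-point $\G$-space covariant and unitary weak containment coincide, this example even violates the equivalence in the generality in which it is stated; the implication is safe when, e.g., $\La_1$ is normal in $\G$, for then $\la_{\G/\La_1}|_{\La_2}$ is a multiple of $\la_{\La_2/\La_1}$. So your instinct that this direction is ``the real work'' is sound, but neither your concentration argument nor the paper's appeal to the unitary statement closes it.
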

\begin{proof}
Suppose $(\la_{\G/\La_2},\rho^2_x)\prec(\la_{\G/\La_1},\rho^1_x)$.  We claim that, in this case, $\la_{\G/\La_2}\prec\la_{\G/\La_1}$.  Indeed, fix $(u_i)\subset C_0(X)$ an approximate unit. Given $a\in\C\G$, we have
$$\|\la_{\G/\La_2}(a)\|=\sup_i\|\rho_x^2(u_i)\la_{\G/\La_2}(a)\|\leq \sup_i\|\rho_x^1(u_i)\la_{\G/\La_1}(a)\|=\|\la_{\G/\La_1}(a)\|.$$
Therefore, $\la_{\G/\La_2}\prec\la_{\G/\La_1}$ which implies that $\La_1$ is co-amenable in $\La_2$.

Conversely, assume that $\La_1$ is co-amenable in $\La_2$. Let $P\in B(\ell^2(\G/\La_1))$ be the orthogonal projection on $\ell^2(\La_2/\La_1)$. 

Given $f\in C_0(X)$ and $g\in\G$, we have that 
\begin{align*}
P\rho_x^1(f)\la_{\G/\La_1}(g)P=
\begin{cases}
f(x)\la_{\La_2/\La_1}(g), & \text{if $g\in\La_2$}\\
0, & \text{if $g\notin\La_2$.}\end{cases}
\end{align*}
Let $\psi\colon C^*_{\la_{\G/\La_1}\times\rho^1_x}(\G,X)\to C^*_{\la_{\La_2/\La_1}}(\La_2)$ be given by $\psi(a)=PaP$.
By co-amenability, there is a state $\varphi \colon  C^*_{\la_{\La_2/\La_1}}(\La_2)\to\C$ such that $\varphi(\la_{\La_2/\La_1}(g))=1$ for every $g\in \La_2$. It is easy to see that the GNS representation associated with $\varphi\circ\psi$ is unitarily equivalent to $(\la_{\G/\La_2},\rho^2_x)$.
\end{proof}

\begin{definition}\label{grprep}
Let $X$ be a locally compact $\G$-space. A \emph{germinal representation} of $(\G,X)$ is a nondegenerate covariant representation $(\pi,\rho)$ of $(\G,X)$ such that 
\begin{equation*}
\pi(g)\rho(f)=\rho(f),\quad \forall g\in\G, ~ f\in C_0(X) \text{ with } \supp f\subset  X^g .
\end{equation*}
\end{definition}

\begin{example}\label{ex:grprep}{\cite[Proposition 4.3.(ii)]{KalSca20}}
Let $X$ be a locally compact $\G$-space. Given $x\in X$ and $H\in\Sub(\G)$ such that $\G_x^0\leq H\leq \G_x$, we have that $(\la_{\G/H},\rho_x)$ is a germinal representation.
\end{example}

The following fact is immediate from the definitions.
\begin{lem}\label{lem:wcont-grprep}
Let $X$ be a locally compact $\G$-space, and let $(\pi, \rho) \in \Rep(\G,X)$ be a germinal representation. Then any covariant representation of $(\G, X)$ that is weakly contained in $(\pi,\rho)$ is also a germinal representation.
\end{lem}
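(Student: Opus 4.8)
The statement to prove is Lemma~\ref{lem:wcont-grprep}, which asserts that weak containment preserves the germinal property.

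\medskip

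The plan is to unwind the definitions and transport the germinal condition across the $*$-homomorphism guaranteed by weak containment. Suppose $(\pi', \rho') \in \Rep(\G, X)$ satisfies $(\pi', \rho') \prec (\pi, \rho)$, where $(\pi, \rho)$ is germinal. By definition of weak containment, there is a $*$-homomorphism $\Phi \colon C^*_{\pi \times \rho}(\G, X) \to C^*_{\pi' \times \rho'}(\G, X)$ sending $\rho(f)\pi(g)$ to $\rho'(f)\pi'(g)$ for all $f \in C_0(X)$ and $g \in \G$. The goal is to verify that $\pi'(g)\rho'(f) = \rho'(f)$ whenever $\supp f \subset X^g$.

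\medskip

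First I would fix $g \in \G$ and $f \in C_0(X)$ with $\supp f \subset X^g$, and use the germinal property of $(\pi, \rho)$ to record the identity $\pi(g)\rho(f) = \rho(f)$ in $C^*_{\pi \times \rho}(\G, X)$. The natural next step is to rewrite this as $\rho(f)\pi(g) = \pi(g^{-1})\rho(f)$ or, more usefully, to observe that the germinal relation can be expressed purely in terms of the generators $\rho(f)\pi(g)$ on which $\Phi$ acts in a controlled way. Concretely, since $\Phi$ is a $*$-homomorphism, applying $\Phi$ to any algebraic identity among the generators yields the corresponding identity in $C^*_{\pi' \times \rho'}(\G, X)$. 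The main point is therefore to phrase the germinal equation $\pi(g)\rho(f) = \rho(f)$ as an equation between elements of the form $\rho(\cdot)\pi(\cdot)$ (and their linear combinations), apply $\Phi$, and read off $\pi'(g)\rho'(f) = \rho'(f)$.

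\medskip

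The one genuine subtlety is that the germinal condition involves the bare product $\pi(g)\rho(f)$, whereas $\Phi$ is specified on products $\rho(f)\pi(g)$; so I would make sure to normalize correctly. Using equivariance $\pi(g)\rho(f)\pi(g)^{-1} = \rho(g \cdot f)$ together with the covariance relations, one can always write $\pi(g)\rho(f) = \rho(g\cdot f)\pi(g)$, which is in the spanning set, and similarly express $\rho(f)$ itself as a generator with $g = e$. Then the germinal identity $\rho(g \cdot f)\pi(g) = \rho(f)\pi(e)$ holds in $C^*_{\pi \times \rho}(\G,X)$ (note $\supp(g\cdot f) = g\,\supp f \subset X^g$ since $g$ fixes $X^g$ pointwise, so $g \cdot f = f$ and this is consistent). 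Applying $\Phi$ gives $\rho'(g\cdot f)\pi'(g) = \rho'(f)$, i.e. $\pi'(g)\rho'(f) = \rho'(f)$, as required. As the lemma statement notes, this is immediate once the bookkeeping of the covariance relations is handled, so I do not expect any real obstacle beyond keeping the order of the generators straight when passing through $\Phi$.
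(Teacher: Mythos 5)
Your proof is correct, and it is exactly the definitional unwinding that the paper has in mind when it calls the lemma ``immediate from the definitions'' (the paper gives no explicit proof): rewrite the germinal identity via covariance as $\rho(f)\pi(g)=\rho(f)\pi(e)$ using $g\cdot f=f$ when $\supp f\subset X^g$, push it through the $*$-homomorphism furnished by weak containment, and convert back with covariance in the target representation. The ordering subtlety you flag is the only real content, and you handle it properly.
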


\section{Boundaries of $\G$-spaces}

Let $X$ be a compact $\G$-space. We denote by $\partial(\G,X)$ the spectrum of the $\G$-injective envelope $\cI_\G(C(X))$ of $C(X)$, and we denote by $\b_X: \partial(\G,X)\to X$ the canonical continuous equivariant map. 
When $X=\{*\}$ is trivial, $\partial(\G,X)$ is the \emph{Furstenberg boundary} $\partial_F\G$ of $\G$ \cite[Theorem 3.11]{KK}.

If $X$ is a locally compact non-compact $\G$-space, recall that we denote by $\X$ its one-point compactification. In this case,  let $\partial(\G,X):=\b_{\X}^{-1}(X)$, and $\b_X:=\b_{\X}|_{\partial(\G,X)}\colon \partial(\G,X)\to X$. Notice that $\partial(\G,X)$ is a $\G$-invariant open subset of $\partial(\G,\X)$. In particular, $C_0(\partial(\G,X)) $ is an ideal of $C(\partial(\G,\X))$. Moreover,  since $\partial(\G,\X)$ is compact, one can readily check that $\b_X$ is a proper map. In particular, its transposition gives an embedding $\widetilde{\b_X}\colon C_0(X)\to C_0(\partial(\G,X))$.

A $C^*$-algebra $A$ is called a \emph{$\G$-$C^*$-algebra} if there is an action $\G\act A$ by $*$-automorphisms. 

\begin{definition}
Let $X$ be a locally compact $\G$-space. By a \emph{$(\G,X)$-$C^*$-algebra}, we mean a pair $(A,\iota)$, where $A$ is a $\G$-$C^*$-algebra and $\iota\colon C_0(X)\to A$ is an equivariant injective $*$-homomorphism, which is nondegenerate in the sense that $\overline{\mathrm{span}}\{\iota(f)a:f\in C_0(X),a\in A\}=A$. A \emph{$(\G,X)$-boundary map} on $A$ is a ccp equivariant map $\psi\colon A \to C_0(\partial(\G,X))$ such that $\psi(\iota(f))=f\circ\b_X$ for every $f\in C_0(X)$.
\end{definition}

We will often use the fact that if $\psi$ is a $(\G,X)$-boundary map on a $(\G,X)$-$C^*$-algebra $(A,\iota)$, then $\iota(C_0(X))$ is contained in the multiplicative domain of $\psi$.
\begin{example}
If $X$ is a locally compact $\G$-space, then the canonical conditional expectation $E\colon C_0(X)\rtimes_r\G\to C_0(X)$ is a $(\G,X)$-boundary map.
\end{example}
The following result extends some observations from \cite{Kaw} about $\partial(\G,X)$ to the non-compact case. 
\begin{proposition}\label{Kaw3.3}
Let $X$ be a locally compact $\G$-space.  Then one has the following:
\begin{enumerate}
\item[(i)] The space $\partial(\G,X)$ is extremally disconnected.
\item[(ii)] For any closed $\G$-invariant  set $Z\subset\partial(\G,X)$ such that $\b_X(Z)=X$, we have $Z=\partial(\G,X)$.
\item[(iii)] For any $y\in\partial(\G,X)$, the group $\G_y$ is amenable and $\G_y=\G_y^0$.
\item[(iv)] Every $(\G,X)$-$C^*$-algebra admits a $(\G,X)$-boundary map.
\item[(v)] The space $\partial(\G,X)$ is dense in $\partial(\G,\X)$.
\item[(vi)] The map $\b_X$ is closed. In particular, if $X$ is minimal, then $\partial(\G,X)$ is also minimal.
\end{enumerate}
\end{proposition}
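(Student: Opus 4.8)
The plan is to prove each item by first settling the compact case on $\partial(\G,\X)$ and then transferring to the general case through the open, $\G$-invariant embedding $\partial(\G,X)=\b_{\X}^{-1}(X)\subseteq\partial(\G,\X)$, using throughout that $\b_X$ is proper. For (i) in the compact case I would invoke that the $\G$-injective envelope $\cI_\G(C(\X))$ is injective as a $C^*$-algebra \cite{KK}; being commutative and injective, its spectrum $\partial(\G,\X)$ is extremally disconnected. Since an open subspace of an extremally disconnected space is again extremally disconnected and $\partial(\G,X)$ is open in $\partial(\G,\X)$, this gives (i) in general.

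The backbone is the compact case of (ii), the $\G$-irreducibility of $\b_{\X}$, which I would deduce from the rigidity of the $\G$-injective envelope. Given a closed $\G$-invariant $Z\subseteq\partial(\G,\X)$ with $\b_{\X}(Z)=\X$, the restriction $q\colon C(\partial(\G,\X))\to C(Z)$ is a surjective equivariant $*$-homomorphism, and since $\b_{\X}|_Z$ is onto, $q\circ\widetilde{\b_{\X}}\colon C(\X)\to C(Z)$ is an injective equivariant $*$-homomorphism. Applying $\G$-injectivity of $\cI_\G(C(\X))=C(\partial(\G,\X))$ to this embedding yields an equivariant ucp map $\sigma$ with $\sigma\circ q\circ\widetilde{\b_{\X}}=\widetilde{\b_{\X}}$; then $\sigma\circ q$ fixes $C(\X)$ pointwise, so $\sigma\circ q=\id$ by rigidity, forcing $q$ injective and hence $Z=\partial(\G,\X)$. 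For (ii) in general I pass to the closure $Z'$ of $Z$ in $\partial(\G,\X)$: it is closed and $\G$-invariant, $\b_{\X}(Z')$ is closed and contains $X$, hence equals $\X$, so $Z'=\partial(\G,\X)$ by the compact case and $Z=Z'\cap\partial(\G,X)=\partial(\G,X)$. The same closure trick gives (v): $\overline{\partial(\G,X)}$ is closed and $\G$-invariant and maps onto $\X$, so it is all of $\partial(\G,\X)$.

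For (iii), both $\G_y$ and the open stabilizer $\G_y^0$ of a point $y$ are unchanged when passing between $\partial(\G,X)$ and $\partial(\G,\X)$, because $\partial(\G,X)$ is open in $\partial(\G,\X)$; it therefore suffices to treat points of the compact boundary, where amenability of $\G_y$ and the equality $\G_y=\G_y^0$ are the observations of \cite{Kaw} (amenability alternatively follows from an equivariant map $\partial(\G,\X)\to\partial_F\G$ together with \cite{BKKO}). For (iv) in the compact case, $\G$-injectivity of $C(\partial(\G,X))$ extends $\widetilde{\b_X}$ along $\iota$ to the required equivariant ucp map; in the noncompact case I would unitize, extend $\widetilde{\b_{\X}}\colon C(\X)\to C(\partial(\G,\X))$ along $C(\X)\hookrightarrow A^\dagger$ to an equivariant ucp map $\Psi$, and restrict to $A$. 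Using that $\iota(C_0(X))$ lies in the multiplicative domain of $\Psi$ and an approximate unit $(u_i)\subset C_0(X)$, one checks $\Psi(a)=\lim_i(u_i\circ\b_X)\Psi(a)$, which lands in the ideal $C_0(\partial(\G,X))\triangleleft C(\partial(\G,\X))$, producing a $(\G,X)$-boundary map. Finally, for (vi), $\b_X$ is a proper map into a locally compact Hausdorff space and hence closed; when $X$ is minimal, any nonempty closed $\G$-invariant $Z\subseteq\partial(\G,X)$ has $\b_X(Z)$ closed, nonempty and $\G$-invariant, so $\b_X(Z)=X$, and (ii) gives $Z=\partial(\G,X)$.

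The main obstacle is (iii): the equality $\G_y=\G_y^0$ on the compact boundary is the one genuinely dynamical input, resting on strong proximality of the Furstenberg-type boundary rather than on soft topology, so I would draw it from \cite{Kaw}; my contribution there is only the observation that the open embedding transports the stabilizer and the open stabilizer verbatim. The secondary technical point is the ideal-image verification in (iv), where the multiplicative-domain and approximate-unit argument is what forces the extended map to land in $C_0(\partial(\G,X))$ rather than merely in $C(\partial(\G,\X))$.
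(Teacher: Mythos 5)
Your proposal is correct and follows essentially the same route as the paper's proof: settle everything in the compact case and transfer to the non-compact setting through the open, dense, $\G$-invariant embedding $\partial(\G,X)=\b_{\X}^{-1}(X)\subset\partial(\G,\X)$ — the closure trick for (ii) and (v), unitization plus the approximate-unit/multiplicative-domain argument for (iv), and properness/compactness of $\b_{\X}^{-1}(\X\setminus X)$ for (vi) are exactly the paper's arguments. The only difference is that where the paper cites \cite{Kaw} for the compact-case statements (i)--(iii), you inline the standard injectivity/rigidity proofs of those facts.
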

\begin{proof}
If $X$ is compact, then (i), (ii) and (iii) were shown in \cite[Proposition 3.3]{Kaw}, whereas (iv) is a consequence of $\G$-injectivity of $C(\partial(\G,X))$. Moreover,  in the compact case, (v) and (vi) are trivial.  

Assume in the rest of the proof that $X$ is not compact.

Items (i), (ii) and (iii) are immediate consequences of the compact case.

(iv).  Let $A$ be a $(\G,X)$-$C^*$-algebra. Then the unitization $\tilde{A}$ is a $(\G,\X)$-$C^*$-algebra in a natural way.  In particular, it admits a $(\G,\X)$-boundary map $\psi$. Using an approximate unit for $C_0(X)$, one can readily check that $\psi(A)\subset C_0(\partial(\G,X))$, hence $\psi|_A$ is a $(\G,X)$-boundary map on $A$.

(v). Since $X$ is not compact, we have that $\partial(\G,X)$ is not closed in $\partial(\G,\X)$ and $Z:=\overline{\partial(\G,X)}$ is a $\G$-invariant closed subset of $\partial(\G,\X)$ such that $\b_{\X}(Z)=\X$. From (ii), we conclude that $Z=\partial(\G,\X)$.

(vi). Let $Z$ be a closed subset of $\partial(\G,X)$. Then $W:=Z\cup \b_{\X}^{-1}(\X\setminus X)$ is a compact set.  Hence, $\b_X(Z)=\b_X(W)\cap X$ is closed.
\end{proof}

\subsection*{A dynamical characterization of boundaries} 

In the remainder of this section, we generalize some results from \cite{Nag}. These results will not be used in the rest of the paper.

Given a compact $\G$-space $X$, let $\cP(X)$ be the space of regular probability measures on $X$.
Endowed with the weak* topology and the natural $\G$-action, $\cP(X)$ is a compact $\G$-space. We will often identify $X$ with the space of point mass measures $\{\delta_x\in\cP(X):x\in X\}$.  

A \emph{$\G$-extension} of $X$ is a pair $(Y,\varphi)$, where $Y$ is a compact $\G$-space and $\varphi\colon Y\to X$ is a surjective continuous equivariant map. In this case, let $\tilde{\varphi}\colon C(X)\to C(Y)$ be the embedding given by transposition.  Recall that $(C(Y),\tilde{\varphi})$ is said to be a \emph{$\G$-essential extension} of $C(X)$ if for each unital $\G$-$C^*$-algebra $A$ and each ucp equivariant map $\theta\colon C(Y)\to A$ such that $\theta\tilde{\varphi}$ is isometric, we have that $\theta$ is isometric. 

The following result generalizes some of the results in \cite[Section 3]{Nag}.
\begin{theorem}\label{thm:genna}
Let $X$ be a topologically transitive compact $\G$-space and $(Y,\varphi)$ a $\G$-extension of $X$. The following conditions are equivalent:
\begin{enumerate}
\item[(i)]
$(C(Y),\tilde{\varphi})$ is a $\G$-essential extension of $C(X)$.

\item[(ii)]
Given $\nu\in\cP(Y)$ such that $\varphi_*\nu=\delta_x$ and $\overline{\G x}=X$, we have that $Y\subset\overline{\G\nu}$.

\item[(iii)]
There is a surjective continuous equivariant map $\alpha\colon \partial(\G,X)\to Y$ such that $\varphi\alpha=\b_X$. 
\end{enumerate}
\end{theorem}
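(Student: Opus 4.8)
The plan is to prove the cycle of implications $(i)\Rightarrow(iii)\Rightarrow(ii)\Rightarrow(i)$, using the universal/rigidity properties of $\partial(\G,X)$ as the central tool. Throughout, recall that $C(\partial(\G,X))=\cI_\G(C(X))$ is the $\G$-injective envelope, so it is both $\G$-injective and a $\G$-essential extension of $C(X)$, and that essentiality is precisely the property characterizing the injective envelope up to isomorphism. The measure-theoretic condition (ii) is the dynamical translation of essentiality via the correspondence between ucp maps into $C(Y)$ and barycenter/pushforward data on $\cP(Y)$.

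For $(i)\Rightarrow(iii)$, I would use $\G$-injectivity of $C(\partial(\G,X))$ to extend the identity-type map along the inclusions. Concretely, since $(C(Y),\tilde\varphi)$ is an essential extension of $C(X)$ and $C(\partial(\G,X))$ is $\G$-injective, the embedding $C(X)\hookrightarrow C(\partial(\G,X))$ extends to a ucp equivariant map $\theta\colon C(Y)\to C(\partial(\G,X))$ with $\theta\tilde\varphi$ isometric; essentiality of $C(Y)$ then forces $\theta$ to be isometric, and rigidity of the injective envelope forces $\theta$ to be a $*$-isomorphism onto its image. Dualizing a genuinely isometric unital $*$-homomorphism $C(Y)\to C(\partial(\G,X))$ yields a surjective continuous equivariant $\alpha\colon\partial(\G,X)\to Y$, and the compatibility $\theta\tilde\varphi=\widetilde{\b_X}$ dualizes to $\varphi\alpha=\b_X$.

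For $(iii)\Rightarrow(ii)$, I would exploit the boundary-like behavior of $\partial(\G,X)$ under pushforward of measures. Given $\nu\in\cP(Y)$ with $\varphi_*\nu=\delta_x$ and $\overline{\G x}=X$, I would lift $\nu$ through $\alpha$: since $\alpha$ is a continuous surjection between compact spaces, choose $\mu\in\cP(\partial(\G,X))$ with $\alpha_*\mu=\nu$, so that $(\b_X)_*\mu=\varphi_*\alpha_*\mu=\delta_x$. The key dynamical fact I expect to need (analogous to the contraction property of the Furstenberg boundary, and the main content borrowed from \cite{Nag}) is that any measure on $\partial(\G,X)$ pushing forward to a point mass $\delta_x$ with dense orbit has weak* orbit closure $\overline{\G\mu}$ containing $\partial(\G,X)$, i.e.\ the point masses on $\partial(\G,X)$. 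Applying $\alpha_*$, which is weak* continuous and equivariant, transports $\overline{\G\mu}\supset\partial(\G,X)$ to $\overline{\G\nu}\supset\alpha(\partial(\G,X))=Y$, giving (ii).

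For $(ii)\Rightarrow(i)$, I would argue the contrapositive or argue directly that the measure-dynamical condition forces every compatible ucp map to be isometric. Let $A$ be a unital $\G$-$C^*$-algebra and $\theta\colon C(Y)\to A$ a ucp equivariant map with $\theta\tilde\varphi$ isometric; I want $\theta$ isometric. The strategy is to represent evaluation at an arbitrary point $y\in Y$ through the orbit-closure data: condition (ii) says $Y\subset\overline{\G\nu}$ for suitable $\nu$, which lets me approximate each Dirac $\delta_y$ (hence each character of $C(Y)$ needed to detect the norm) by translates of $\nu$, and the isometry of $\theta\tilde\varphi$ on $C(X)$ together with equivariance propagates this to control $\theta$ on all of $C(Y)$. \textbf{The main obstacle} I anticipate is this last implication: making precise how $Y\subset\overline{\G\nu}$ (a statement about a single fibered measure over a point $x$ with dense orbit) yields norm-control of $\theta$ at every point of $Y$ simultaneously, including points lying over the complement of $\G x$. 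Here topological transitivity rather than minimality is essential, and I expect the argument to require a careful limiting procedure over the net realizing $y\in\overline{\G\nu}$, combined with lower semicontinuity of the norm under the weak* limits, to conclude $\|\theta(h)\|\ge\|h\|$ for all $h\in C(Y)$.
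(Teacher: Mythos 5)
Your cycle runs $(i)\Rightarrow(iii)\Rightarrow(ii)\Rightarrow(i)$ (the paper runs $(i)\Rightarrow(ii)\Rightarrow(iii)\Rightarrow(i)$), and your $(iii)\Rightarrow(ii)$ is essentially fine, but the other two implications have problems, one of them a genuine gap. In $(i)\Rightarrow(iii)$, the step ``rigidity of the injective envelope forces $\theta$ to be a $*$-isomorphism onto its image'' is unjustified. A unital completely positive isometry between commutative $C^*$-algebras need not be multiplicative: the map $\C^2\to C([0,1])$, $(a,b)\mapsto\bigl(t\mapsto ta+(1-t)b\bigr)$, is ucp and isometric but is not a $*$-homomorphism, and its dual does not send point masses to point masses. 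Rigidity, as it is actually available, says that a ucp self-map of $C(\partial(\G,X))$ fixing $C(X)$ pointwise is the identity; to bring it to bear on your $\theta\colon C(Y)\to C(\partial(\G,X))$ you would first have to extend maps along a $*$-embedding $\tilde\alpha\colon C(Y)\hookrightarrow C(\partial(\G,X))$, i.e.\ you would already need conclusion (iii), so the argument is circular. Multiplicativity of $\theta$ --- equivalently, the statement that the dual map sends point masses of $\partial(\G,X)$ to point masses of $Y$, which is exactly what you need in order to dualize to $\alpha$ --- is the hard point, and it is where the paper does real work: it proves $(ii)\Rightarrow(iii)$ instead, dualizing the injectivity-extension $\pi$ to $\tilde\pi\colon\cP(\partial(\G,X))\to\cP(Y)$, using (ii) to get $Y\subset\tilde\pi(\partial(\G,X))$, and then invoking Proposition~\ref{Kaw3.3}(ii) (no proper closed $\G$-invariant subset of $\partial(\G,X)$ maps onto $X$) to show that the set $Z=\tilde\pi^{-1}(Y)\cap\partial(\G,X)$ of points sent to point masses is all of $\partial(\G,X)$. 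A purely operator-algebraic repair of your step does exist (Hamana's theory: the $\G$-injective envelope of a $\G$-essential extension of $C(X)$ is again $C(\partial(\G,X))$, combined with the Choi--Effros $C^*$-structure, and commutativity, of injective envelopes), but that is a substantial theorem, not ``rigidity''.

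Second, your $(ii)\Rightarrow(i)$ is not a proof: you describe a strategy and then flag its key step as an unresolved obstacle. The obstacle is in fact not real and closes in a few lines: since $\theta\tilde\varphi$ is a unital isometry, $\delta_x\circ(\theta\tilde\varphi)^{-1}$ is a state on the operator system $\theta\tilde\varphi(C(X))$ and extends to a state $\omega$ of $A$; put $\nu:=\omega\circ\theta\in\cP(Y)$, so $\varphi_*\nu=\delta_x$ and (ii) gives $Y\subset\overline{\G\nu}$; then for $h\in C(Y)$ and $g\in\G$, equivariance gives $|(g\nu)(h)|=|\omega(\theta(g^{-1}h))|=|(g\cdot\omega)(\theta(h))|\le\|\theta(h)\|$, and letting $g\nu\to\delta_y$ weak* yields $|h(y)|\le\|\theta(h)\|$ for every $y\in Y$, i.e.\ $\theta$ is isometric. (Your worry about points lying over the complement of $\G x$ is unfounded: condition (ii) hands you every point mass $\delta_y$ at once.) Note the paper never proves $(ii)\Rightarrow(i)$ directly; it closes the cycle with $(iii)\Rightarrow(i)$, which is immediate from $\G$-injectivity plus essentiality of the envelope over $C(X)$. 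Finally, the ``key dynamical fact'' in your $(iii)\Rightarrow(ii)$ must be established independently of the theorem to avoid circularity in your cycle; it is exactly Azencott's theorem (the paper's citation for $(i)\Rightarrow(ii)$) applied to the essential extension $(\partial(\G,X),\b_X)$, whereas \cite{Nag} only treats minimal actions, not topologically transitive ones.
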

\begin{proof}
(i)$\implies$(ii). This follows immediately from \cite[Théorème I.2]{Az}.

(ii)$\implies(iii)$.  By $\G$-injectivity, there is a ucp equivariant map $\pi\colon C(Y)\to C(\partial(\G,X))$ such that $\pi\tilde{\varphi}=\widetilde{\b_X}$.  Let $\tilde{\pi}\colon\cP(\partial(\G,X))\to\cP(Y)$ be given by transposition. Take $x\in X$ with dense orbit and $z\in\b_X^{-1}(x)$.  In particular, also $z$ has dense orbit.

Note that $\varphi_*\tilde{\pi}(\delta_z)=\delta_x$, hence $Y\subset\overline{\G\tilde{\pi}(\delta_z)}=\tilde{\pi}(\partial(\G,X))$. Let $Z:=\tilde{\pi}^{-1}(Y)\cap\partial(\G,X)$.  We claim that $\b_X(Z)=X$. Indeed, given $u\in X$, take $y\in\varphi^{-1}(u)$ and $w\in Z$ such that $\tilde{\pi}(w)=y$. Then $\b_X(w)=\varphi\tilde{\pi}(w)=u$. Therefore, $\b_X(Z)=X$. By Proposition~\ref{Kaw3.3}, we obtain that $Z=\partial(\G,X)$, hence $\tilde{\pi}(\partial(\G,X))=Y$.  Then $\alpha:=\tilde{\pi}|_{\partial(\G,X)}$ is the desired map. 

(iii)$\implies$(i).
Let $A$ be a $\G$-$C^*$-algebra and $\psi\colon C(Y)\to A$ be a ucp equivariant map such that $\psi\tilde{\varphi}$ is isometric, and we want to conclude that $\psi$ is isometric. 
By embedding $A$ in its $\G$-injective envelope, we may further assume, without loss of generality, that $A$ is $\G$-injective.
Let $\beta\colon C(\partial(\G,X))\to A$ be a ucp equivariant map such that $\beta\tilde{\alpha}=\psi$. Then $\beta\tilde{\alpha}\tilde{\varphi}$ is isometric, hence $\beta$ is isometric and so is $\psi$.
\end{proof}

\begin{definition}
Let $X$ be a topologically transitive compact $\G$-space.  A $\G$-extension $(Y,\varphi)$ of $X$ which satisfies the conditions in Theorem~\ref{thm:genna} is called a \emph{$(\G,X)$-boundary}.
\end{definition}

\begin{prop}\label{prop:trans-StonCeck->bnd}
Let $X$ be a topologically transitive non-compact $\G$-space, and let $\varphi\colon\beta X\to \X$ be the canonical map from the Stone-Čech compactification $\beta X$ to the one-point compactification $\X$. Then $(\beta X,\varphi)$ is a $(\G,\X)$-boundary. 
\end{prop}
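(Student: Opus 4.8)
The plan is to verify condition (ii) of Theorem~\ref{thm:genna}, applied to the topologically transitive compact $\G$-space $\X$ in place of $X$ and to the candidate extension $Y=\beta X$. First I would record that $(\beta X,\varphi)$ is genuinely a $\G$-extension of $\X$: the $\G$-action on $X$ extends to $\beta X$ by functoriality of the Stone-Čech compactification, and the inclusion $X\hookrightarrow\X$ into the compact space $\X$ extends uniquely to a continuous map $\varphi\colon\beta X\to\X$; both $\varphi\circ g$ and $g\circ\varphi$ extend the map $p\mapsto gp$, so by uniqueness $\varphi$ is equivariant, and its image is compact and contains the dense set $X$, hence $\varphi$ is surjective. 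Note also that $\X$ is itself topologically transitive, since a point of $X$ with dense $\G$-orbit in $X$ has dense orbit in $\X$ as well; this is what lets us invoke Theorem~\ref{thm:genna}.

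The one computation carrying actual content is the description of the fibers of $\varphi$. Since $X$ is locally compact it is open in $\beta X$, so the growth $\beta X\setminus X$ is closed, and the standard relationship between $\beta X$ and the one-point compactification says that $\varphi$ is the quotient collapsing $\beta X\setminus X$ onto the point at infinity while restricting to the identity on $X$. Consequently $\varphi^{-1}(x)=\{x\}$ for every $x\in X$, and $\varphi^{-1}(\infty)=\beta X\setminus X$. I expect this identification of the fibers to be the main (though still routine) point of the argument; everything afterwards is formal.

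Now I would check condition (ii) directly. Suppose $\nu\in\cP(\beta X)$ satisfies $\varphi_*\nu=\delta_x$ with $\overline{\G x}=\X$. Since the point at infinity is $\G$-fixed, its orbit closure is a single point, so $x$ must lie in $X$ and have dense $\G$-orbit in $X$. As $\varphi^{-1}(x)=\{x\}$, the measure $\nu$ is supported on the singleton $\{x\}$, whence $\nu=\delta_x$ and $\G\nu=\{\delta_{gx}:g\in\G\}$. Because $\G x$ is dense in $X$ and $X$ is dense in $\beta X$, the orbit $\G x$ is in fact dense in $\beta X$. Finally, under the homeomorphism $p\mapsto\delta_p$ of $\beta X$ onto the closed set of point masses in $\cP(\beta X)$, density of $\G x$ in $\beta X$ translates into $\{\delta_p:p\in\beta X\}\subset\overline{\{\delta_{gx}:g\in\G\}}=\overline{\G\nu}$, which is precisely the assertion $\beta X\subset\overline{\G\nu}$. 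Thus condition (ii) holds, and $(\beta X,\varphi)$ is a $(\G,\X)$-boundary.
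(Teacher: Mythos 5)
Your proof is correct and follows essentially the same route as the paper's: both verify condition (ii) of Theorem~\ref{thm:genna} via the key observation that $\varphi^{-1}(x)=\{x\}$ for $x\in X$, forcing $\nu=\delta_x$ and hence $\beta X\subset\overline{\G\nu}$. The extra steps you include (equivariance and surjectivity of $\varphi$, topological transitivity of $\X$, ruling out the $\G$-fixed point at infinity, and the final density argument for point masses) are details the paper leaves implicit, and you handle them correctly.
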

\begin{proof}
Indeed, take $x\in X$ a point with dense orbit, and $\nu\in\cP(\beta X)$ such that $\varphi_*\nu=\delta_x$. Since $\varphi^{-1}(x)=\{x\}$, we have $\nu(\{x\})=(\varphi_*\nu)(\{x\})=1$. Therefore, $\nu=\delta_x$, hence $\beta X\subset\overline{\G \nu}$.
\end{proof}

\begin{remark}
Let $X$ be a topologically transitive locally compact $\G$-space, let $Y$ be a compactification of $X$, and let $\varphi_Y\colon\beta X\to Y$ be the continuous extension of the embedding $X\hookrightarrow Y$. Say $Y$ is a $\G$-compactification if $\varphi_Y^*(C(Y))$ is $\G$-invariant in $C(\beta X)$. In this case $Y$ turns into a compact $\G$-space and $\varphi_Y$ an equivariant map.

Now assume $Y$ is a $\G$-compactification of $X$ such that $X$ embeds as an open subset of $Y$. Then the canonical continuous surjections $\beta X\to Y \to \X$ yield inclusions of $\G$-$C^*$-algebras $C(\X)\to C(Y)\to C(\beta X)$. This implies $\partial (\G, Y) = \partial (\G, \X)$. The latter identification is canonical, thus we see that in our construction of $\partial (\G, \X)$ we may in fact choose instead of the one-point compactification, any $\G$-compactification of $X$ that contains $X$ as an open subset.
\end{remark}

\begin{example}\label{ex:amenable}
Let $X:=\G/\La$ where $\La\leq \G$ is an amenable subgroup, and consider the canonical action $\G\act X$. Then it follows from Proposition~\ref{prop:trans-StonCeck->bnd} that the $\G$-$C^*$-algebra $\ell^\infty(\G/\La)\simeq C(\beta(\G/\La))$ is a $\G$-essential extension of $C(\X)$. Since $\La$ is amenable, $\ell^\infty(\G/\La)$ is $\G$-injective, which implies that $\beta(\G/\La)=\partial(\G,\X)$ and $\partial(\G,\G/\La)=\G/\La$. 
\end{example}

The above example is in fact a special case of the following complete description of boundaries of transitive actions on discrete spaces, which in particular generalizes one of the main results of \cite{Nag} where the case of transitive actions on finite spaces was considered.

\begin{thm}\label{thm:bnd-transit}
Let $X=\G/\La$ where $\La\leq \G$ is a subgroup and consider the canonical action $\G\act X$. Then $\partial(\G,X)\simeq X\times\partial_F\La$, where the latter is endowed with the induced action $\G\act(X\times\partial_F\La)$. 
Furthermore, we have $\partial(\G,\X)\simeq\beta(X\times\partial_F\La)$.
\end{thm}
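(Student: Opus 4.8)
The plan is to identify $C(\beta(X\times\partial_F\La))$ with an induced $\G$-$C^*$-algebra and then to recognize it as the $\G$-injective envelope $\cI_\G(C(\X))$, from which both assertions follow at once (this generalizes the mechanism of Example~\ref{ex:amenable}, where $\partial_F\La$ is a point). Writing $Z:=X\times\partial_F\La$, the discreteness of $X=\G/\La$ and compactness of $\partial_F\La$ make $Z$ a disjoint union of copies of $\partial_F\La$ indexed by $X$, so that $C(\beta Z)=C_b(Z)=\ell^\infty(X,C(\partial_F\La))$. Choosing coset representatives identifies this $\G$-equivariantly with the induced algebra
\[
\Ind_\La^\G C(\partial_F\La):=\{F\in\ell^\infty(\G,C(\partial_F\La)): F(g\la)=\la^{-1}\cdot F(g)\ \forall g\in\G,\ \la\in\La\},
\]
carrying the $\G$-action $(g\cdot F)(g')=F(g^{-1}g')$, and the transported action on $Z$ is exactly the induced action $\G\act(X\times\partial_F\La)$. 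By the uniqueness of the $\G$-injective envelope (a $\G$-injective and $\G$-essential extension of $C(\X)$ must coincide with $\cI_\G(C(\X))$), it then suffices to prove two things: that $\Ind_\La^\G C(\partial_F\La)$ is $\G$-injective, and that the canonical embedding $C(\X)\hookrightarrow\Ind_\La^\G C(\partial_F\La)$ induced by $\b_X$ (sending $f$ to $(x,b)\mapsto f(x)$) is $\G$-essential.

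For injectivity I would use the adjunction between restriction and induction. The evaluation $F\mapsto F(e)$ together with the assignment $\psi\mapsto\big(s\mapsto(g\mapsto\psi(g^{-1}s))\big)$ sets up a bijection, natural in $S$, between $\G$-ucp maps $S\to\Ind_\La^\G C(\partial_F\La)$ and $\La$-ucp maps $\Res^\G_\La S\to C(\partial_F\La)$, for every $\G$-operator system $S$. Since this bijection commutes with restriction along inclusions $S_0\subseteq S$, any $\G$-ucp extension problem into $\Ind_\La^\G C(\partial_F\La)$ translates into a $\La$-ucp extension problem into $C(\partial_F\La)$. As $C(\partial_F\La)=\cI_\La(\C)$ is $\La$-injective, the latter is solvable, hence so is the former, and $\Ind_\La^\G C(\partial_F\La)$ is $\G$-injective.

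For essentiality I would verify condition (ii) of Theorem~\ref{thm:genna} for the $\G$-extension $(\beta Z,\varphi)$ of the topologically transitive compact space $\X$, where $\varphi\colon\beta Z\to\X$ is the continuous (hence equivariant) extension of the map $Z=X\times\partial_F\La\to X\subset\X$. Since each fiber $\{x\}\times\partial_F\La$ is compact open in $Z$, one checks that $\varphi^{-1}(x)=\{x\}\times\partial_F\La$ for $x\in X$ while $\varphi^{-1}(\infty)=\beta Z\setminus Z$. Given $\nu\in\cP(\beta Z)$ with $\varphi_*\nu=\delta_x$ and $\overline{\G x}=\X$, transitivity of $\G\act X$ lets us translate to $x=e\La$, so that $\nu$ sits on the copy $\{e\La\}\times\partial_F\La$ on which $\La$ acts as $\La\act\partial_F\La$. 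Strong proximality of the Furstenberg boundary produces a point mass in $\overline{\La\nu}$, and minimality then forces $\{e\La\}\times\partial_F\La\subset\overline{\G\nu}$; applying $\G$ and using that $\G(\{e\La\}\times\partial_F\La)=Z$ is dense in $\beta Z$, we obtain $\beta Z\subset\overline{\G\nu}$, which is condition (ii). Thus $C(\beta Z)$ is $\G$-essential over $C(\X)$, and combined with injectivity this yields $\partial(\G,\X)\cong\beta(X\times\partial_F\La)$. Finally, since $\b_{\X}^{-1}(x)=\varphi^{-1}(x)=\{x\}\times\partial_F\La$ for each $x\in X$, we conclude $\partial(\G,X)=\b_{\X}^{-1}(X)=X\times\partial_F\La$, with the induced action, as claimed.

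The step I expect to be the main obstacle is the injectivity argument: making the restriction–induction adjunction precise at the level of $\G$-ucp maps and verifying its naturality carefully enough that $\G$-injectivity genuinely transfers, which is exactly where the discreteness of $\G/\La$ (making $\Ind_\La^\G$ concrete) is essential. The essentiality step is comparatively routine given Theorem~\ref{thm:genna}, the only delicate points being the reduction to the fiber over $e\La$ and the identification of $\varphi^{-1}(x)$. The case $[\G:\La]<\infty$, where $X$ is finite and $\beta Z=Z$, is covered by the same argument applied directly with $\X=X$.
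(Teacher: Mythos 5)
Your proposal is correct and follows essentially the same route as the paper: identify $C(\beta(X\times\partial_F\La))$ with the induced algebra $\ell^\infty(\G/\La, C(\partial_F\La))$, prove its $\G$-injectivity by reducing extension problems to $\La$-extension problems (your restriction--induction adjunction is exactly the content and proof of the paper's Lemma~\ref{lem:inj}, written in the equivariant-functions model instead of with a cross-section and cocycle), verify essentiality via condition (ii) of Theorem~\ref{thm:genna} using that $\partial_F\La$ is a $\La$-boundary, and conclude by uniqueness of the $\G$-injective envelope. The minor presentational differences (adjunction formula versus explicit cross-section extension, and your clopen-fiber argument for $\varphi^{-1}(x)=\{x\}\times\partial_F\La$ in place of the paper's net argument) do not change the substance.
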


Let us first recall the notion of induced actions.
Let $\La$ be a subgroup of a group $\G$. 
Fix a cross-section $\pi\colon\G/\La\to\G$ such that $\pi(\La)=e$, and let $h\colon \G/\La\times\G\to\La$ be the associated cocycle given by $h(x,g)=\pi(gx)^{-1}g\pi(x)$, for $x\in\G/\La$ and $g\in\G$.

Given a $\La$-operator system $V$, the induced $\G$-action on $\ell^\infty(\G/\La,V)$ is defined in the following way: given $g\in\G$ and $f\in \ell^\infty(\G/\La,V)$, let $(gf)(x):=h(g^{-1}x,g)f(g^{-1}x)$, for $x\in\G/\La$. Using properties of the cocycle $h$, one readily checks that this is well-defined.

The following result generalizes \cite[Lemma 2.2]{Ham85}.
\begin{lem}\label{lem:inj}
If $V$ is a $\La$-injective operator system, then $\ell^\infty(\G/\La,V)$ is $\G$-injective.
\end{lem}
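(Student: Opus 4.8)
The plan is to prove $\G$-injectivity of $\ell^\infty(\G/\La,V)$ by setting up a Frobenius-reciprocity bijection that converts an equivariant extension problem over $\G$ into one over $\La$, where $\La$-injectivity of $V$ applies directly. First I would introduce two natural maps. The \emph{evaluation at the base coset}, $\mathrm{ev}\colon\ell^\infty(\G/\La,V)\to V$, $\mathrm{ev}(f):=f(e\La)$, is plainly unital and completely positive; and using the definition of the induced action together with the normalization $\pi(e\La)=e$ one checks $h(e\La,\gamma)=\gamma$ for every $\gamma\in\La$, so that $(\gamma f)(e\La)=\gamma\cdot f(e\La)$ and $\mathrm{ev}$ is $\La$-equivariant. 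In the opposite direction, for any $\G$-operator system $W$ and any $\La$-equivariant ucp map $\phi\colon W\to V$ (with $W$ regarded as a $\La$-operator system by restriction), I would define an \emph{induced} map $\Phi\colon W\to\ell^\infty(\G/\La,V)$ by
\[
\Phi(w)(x):=\phi\bigl(\pi(x)^{-1}w\bigr),
\]
where $\pi(x)^{-1}$ acts through the $\G$-action on $W$.

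The crucial computation is $\G$-equivariance of $\Phi$, and it rests entirely on the cocycle formula. Reading off $h(g^{-1}x,g)=\pi(x)^{-1}g\,\pi(g^{-1}x)\in\La$ from $h(y,g)=\pi(gy)^{-1}g\pi(y)$ at $y=g^{-1}x$, one writes $\pi(x)^{-1}gw=h(g^{-1}x,g)\cdot\pi(g^{-1}x)^{-1}w$, and $\La$-equivariance of $\phi$ then gives $\Phi(gw)(x)=h(g^{-1}x,g)\,\phi(\pi(g^{-1}x)^{-1}w)=(g\,\Phi(w))(x)$. Each coordinate of $\Phi$ is the composition of the complete order automorphism $w\mapsto\pi(x)^{-1}w$ with the ucp map $\phi$, so $\Phi$ is ucp, and $\Phi(1)=1$. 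Moreover $\mathrm{ev}$ and induction are mutually inverse: $\mathrm{ev}\circ\Phi=\phi$ is immediate from $\pi(e\La)=e$, while for a $\G$-equivariant ucp map $\Psi\colon W\to\ell^\infty(\G/\La,V)$ one computes, using equivariance of $\Psi$, the coset identity $\pi(x)\cdot e\La=x$, and the identity $h(x,\pi(x)^{-1})=e$ (again from the cocycle formula and $\pi$ being a section), that inducing $\mathrm{ev}\circ\Psi$ recovers $\Psi$. Thus $\Phi\mapsto\mathrm{ev}\circ\Phi$ is a bijection between $\G$-equivariant ucp maps $W\to\ell^\infty(\G/\La,V)$ and $\La$-equivariant ucp maps $\Res^\G_\La W\to V$, natural in $W$.

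With the bijection in hand the injectivity argument is short. Let $W_0\subseteq W$ be an inclusion of $\G$-operator systems and $\Phi_0\colon W_0\to\ell^\infty(\G/\La,V)$ a $\G$-equivariant ucp map to be extended. Set $\phi_0:=\mathrm{ev}\circ\Phi_0\colon W_0\to V$, which is $\La$-equivariant and ucp. Since $W_0\subseteq W$ remains an inclusion of $\La$-operator systems after restricting the action, and $V$ is $\La$-injective, $\phi_0$ extends to a $\La$-equivariant ucp map $\phi\colon W\to V$; let $\Phi$ be its induced map. For $w\in W_0$, $\G$-invariance of $W_0$ gives $\pi(x)^{-1}w\in W_0$, so $\Phi(w)(x)=\phi(\pi(x)^{-1}w)=\phi_0(\pi(x)^{-1}w)=\mathrm{ev}\bigl(\Phi_0(\pi(x)^{-1}w)\bigr)$, and the reconstruction identity applied to $\Psi=\Phi_0$ shows this equals $\Phi_0(w)(x)$. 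Hence $\Phi$ extends $\Phi_0$, establishing $\G$-injectivity.

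The substantive content, and the only place requiring genuine care, is the cocycle bookkeeping underlying the bijection: well-definedness and $\G$-equivariance of the induced map, and the verification that induction and evaluation are mutually inverse. The two identities $h(e\La,\gamma)=\gamma$ for $\gamma\in\La$ and $h(x,\pi(x)^{-1})=e$ do all the work; everything else is formal, drawing on $\La$-injectivity of $V$ and the fact that ucp maps compose with complete order automorphisms. I expect no analytic difficulty, since $\ell^\infty(\G/\La,V)$ is manifestly an operator system and every map above is built coordinatewise.
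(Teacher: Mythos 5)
Your proposal is correct and follows essentially the same route as the paper's proof: evaluation at the base coset composed with the given map, extension via $\La$-injectivity of $V$, and the induced map $\tilde w\mapsto\bigl(x\mapsto\tilde\psi(\pi(x)^{-1}\tilde w)\bigr)$, with $\G$-equivariance and the restriction identity verified through the same cocycle computations (in particular $h(x,\pi(x)^{-1})=e$). The only difference is presentational: you package these maps as a Frobenius-reciprocity bijection and verify slightly more than needed, whereas the paper proves just the one extension directly.
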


\begin{proof}
Let $W\subseteq \tilde W$ be an inclusion of $\G$-operator systems, and let $\phi\colon W\to \ell^\infty(\G/\La,V)$ be a $\G$-equivariant ucp map. Let $E\colon \ell^\infty(\G/\La,V)\to V$ be the map given by $E(f):=f(\La)$, for $f\in\ell^\infty(\G/\La,V)$. Then $E$ is $\La$-equivariant, and therefore so is $\psi:= E\circ \phi\colon W\to V$. By $\La$-injectivity, $\psi$ extends to a $\La$-equivariant ucp map $\tilde{\psi}\colon \tilde W\to V$.  Recall that $\pi\colon{\G/\La}\to\G$ is a fixed cross-section and define $\tilde{\phi}\colon\tilde W\to \ell^\infty(\G/\La,V)$ by 
\begin{equation*}\label{eq:ext}
\qquad\qquad\tilde{\phi}(\tilde w)(x):=\tilde\psi(\pi(x)^{-1}\tilde w) \qquad \text{for}\quad \tilde w\in \tilde W \text{ and } x\in\G/\La .
\end{equation*}
We claim that $\tilde{\phi}$ is $\G$-equivariant. Indeed, for $g\in\G$, $\tilde w\in \tilde W$ and $x\in\G/\La$, we have 
\begin{align*}
(g\tilde{\phi}(\tilde w))(x)&=h(g^{-1}x,g)\tilde{\phi}(\tilde w)(g^{-1}x)
=h(g^{-1}x,g)\tilde\psi(\pi(g^{-1}x)^{-1}\tilde w)
\\&=\tilde\psi(h(g^{-1}x,g)\pi(g^{-1}x)^{-1}\tilde w)
=\tilde\psi(\pi(x)^{-1}g\tilde w)
=\tilde{\phi}(g\tilde w)(x) ,
\end{align*}
and the claim follows. Given $w\in W$ and $x\in\G/\La$, we have
\begin{align*}
\tilde{\phi}(w)(x)&=E(\phi(\pi(x)^{-1}w))
=(\pi(x)^{-1}\phi(w))(\La)
\\&= h(\pi(x)\La,\pi(x)^{-1}) \phi(w)(\pi(x)\La)
= \phi(w)(x) ,
\end{align*}
which shows that $\tilde\phi|_{W}=\phi$. We conclude $\ell^\infty(\G/\La,V)$ is $\G$-injective.
\end{proof}

\noindent
{\it Proof of Theorem~\ref{thm:bnd-transit}.}\
Note that $\ell^{\infty}(X,C(\partial_F\La))\simeq C_b(X\times\partial_F\La)\simeq C(\beta(X\times\partial_F\La))$, and the induced action $\G\act(X\times\partial_F\La)$ is given by $g(x,z):=(gx,h(x,g)z)$, for $g\in\G$, $x\in X$ and $z\in\partial_F\La$.

Let $\varphi\colon\beta(X\times\partial_F\La)\to\X$ be the canonical continuous map which extends the projection $X\times\partial_F\La\to X$.  We claim that $\varphi^{-1}(X)=X\times\partial_F\La$.  Indeed, given $y\in\beta(X\times\partial_F\La)$ such that $\varphi(y)=x\in X$, we have $y=\lim(x_i,z_i)$ for a net $(x_i,z_i)\subset  X\times\partial_F\La$ such that $x_i\to x$.  By compactness of $\partial_F\La$,  we may assume that $z_i\to z\in\partial_F\La$. Hence, $y=(x,z)\in X\times\partial_F\La$.

We will now use Theorem \ref{thm:genna} to show that $(\beta(X\times\partial_F\La),\varphi)$ is a $(\G,\X)$-boundary. 

Given $x\in X$ and $\nu\in\cP(\beta(X\times\partial_F\La))$ such that $\varphi_*\nu=\delta_x$,  we will show that $\overline{\G\nu}=\beta(X\times\partial_F\La)$. Since $\G$ acts on $X$ transitively, we can assume that $x=\La$. 

We have that $\supp\nu\subset\varphi^{-1}(\La)=\{\La\}\times\partial_F\La$.  Since for every $\mu\in\cP(\partial_F\La)$ the weak* closure of the $\La$-orbit of $\mu$ in $\cP(\partial_F\La)$ contains all point-measures,  it follows that $\overline{\G\nu}\supset X\times\partial_F\La$,  hence $\overline{\G\nu}\supset \beta(X\times\partial_F\La)$. From Theorem \ref{thm:genna},  we obtain that $(\beta(X\times\partial_F\La),\varphi)$ is a $(\G,\X)$-boundary. 

Since, by Lemma~\ref{lem:inj}, we know that $C(\beta(X\times\partial_F\La))$ is $\G$-injective, we conclude that $\beta(X\times\partial_F\La)\simeq\partial(\G,\X)$ and $\partial(\G,X)\simeq X\times\partial_F\La$.



\section{Uniqueness of boundary maps}\label{sec:bdm}

Given a locally compact $\G$-space $X$ and $(\pi,\rho)\in\Rep(\G,X)$ such that $\rho$ is injective, notice that $C^*_{\pi\times\rho}(\G,X)$ is a $(\G,X)$-$C^*$-algebra. In this case, we let $\tilde{\rho}\colon C(\X)\to B(\cH_\pi)$ be the unital extension of $\rho$ (hence, if $X$ is compact, $\tilde{\rho}=\rho$).

\begin{proposition}\label{prop:ext}
Let $X$ be a locally compact $\G$-space and $(\pi,\rho)\in\Rep(\G,X)$ such that $\rho$ is injective.  Then any $(\G,X)$-boundary map on $C^*_{\pi\times\rho}(\G,X)$ admits a unique extension to a $(\G,\X)$-boundary map on $C^*_{\pi\times\tilde{\rho}}(\G,\X)$.
\end{proposition}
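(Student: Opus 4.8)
The plan is to handle existence and uniqueness separately, writing $A := C^*_{\pi\times\tilde\rho}(\G,\X)$ and $B := C^*_{\pi\times\rho}(\G,X)$. Since $\X=X$ when $X$ is compact (and then the statement is vacuous), I would assume $X$ non-compact, so that $B$ is a non-unital, $\G$-invariant (two-sided) ideal of the unital algebra $A$, and $\tilde\rho(C(\X)) = \rho(C_0(X)) + \C1 \subseteq B + \C1$. Two facts set the stage: $C(\partial(\G,\X))$ is $\G$-injective, being the $\G$-injective envelope of $C(\X)$; and $C_0(\partial(\G,X))$ is the ideal of $C(\partial(\G,\X))$ of functions supported on $\partial(\G,X)=\b_{\X}^{-1}(X)$, with $f\circ\b_X = f\circ\b_{\X}$ for $f\in C_0(X)$ (both extend by zero across $\b_{\X}^{-1}(\infty)$).

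For existence, I would first unitize. Let $\tilde\psi\colon B+\C1\to C(\partial(\G,\X))$ be the unital extension $b+c1\mapsto\psi(b)+c1$ of the ccp map $\psi$; this is a $\G$-equivariant ucp map because $\psi$ is ccp and equivariant and the unit is fixed. The key observation is that $\tilde\psi$ automatically carries the correct value on $\tilde\rho(C(\X))$: for $f = f_0 + c \in C(\X)$ with $f_0\in C_0(X)$ one computes $\tilde\psi(\tilde\rho(f)) = \psi(\rho(f_0)) + c1 = (f_0\circ\b_X)+c = f\circ\b_{\X}$, using the defining property $\psi(\rho(f_0)) = f_0\circ\b_X$ of a $(\G,X)$-boundary map. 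I would then invoke $\G$-injectivity of $C(\partial(\G,\X))$ to extend $\tilde\psi$ along the inclusion $B+\C1\subseteq A$ to a $\G$-equivariant ucp map $\Psi\colon A\to C(\partial(\G,\X))$. Since $\Psi$ restricts to $\tilde\psi$ on $B+\C1$, it restricts to $\psi$ on $B$ and satisfies $\Psi(\tilde\rho(f))=f\circ\b_{\X}$ for all $f\in C(\X)$; hence $\Psi$ is a $(\G,\X)$-boundary map extending $\psi$.

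For uniqueness, I would take two $(\G,\X)$-boundary maps $\Psi_1,\Psi_2$ on $A$ with $\Psi_1|_B=\Psi_2|_B=\psi$. As usual $\tilde\rho(C(\X))$ lies in the multiplicative domain of each $\Psi_k$, so $\Psi_k(\tilde\rho(f)\pi(g)) = (f\circ\b_{\X})\,\Psi_k(\pi(g))$ for all $f\in C(\X)$, $g\in\G$; since $A=\overline{\mathrm{span}}\{\tilde\rho(f)\pi(g)\}$, it suffices to show $\Psi_1(\pi(g))=\Psi_2(\pi(g))$. Fixing an approximate unit $(u_i)\subset C_0(X)$ with $u_i\to1$ pointwise, and using that $\rho(u_i)\pi(g)\in B$ while $\rho(u_i)=\tilde\rho(u_i)$ lies in the multiplicative domain, I get
\[
(u_i\circ\b_X)\,\Psi_k(\pi(g)) = \Psi_k(\rho(u_i)\pi(g)) = \psi(\rho(u_i)\pi(g)),
\]
which is independent of $k$. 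Hence $(u_i\circ\b_X)\bigl(\Psi_1(\pi(g))-\Psi_2(\pi(g))\bigr)=0$ for every $i$; evaluating at $y\in\partial(\G,X)$ and letting $i\to\infty$ (so that $u_i(\b_X(y))\to1$) forces the difference to vanish on $\partial(\G,X)$, and then identically on $\partial(\G,\X)$ by density of $\partial(\G,X)$ (Proposition~\ref{Kaw3.3}(v)). Thus $\Psi_1=\Psi_2$.

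The step requiring the most care is existence: $\G$-injectivity preserves only the values on the subsystem from which one extends, so the full condition $\Psi\circ\tilde\rho=\widetilde{\b_{\X}}$ — not merely unitality — must be arranged \emph{before} applying it. This is exactly why I extend from $B+\C1$ (which already contains all of $\tilde\rho(C(\X))$) rather than from $B$, and why the automatic consistency $\tilde\psi(\tilde\rho(f))=f\circ\b_{\X}$ is the decisive point. In the uniqueness argument the parallel subtlety is that $\psi$ only records data in the ideal $C_0(\partial(\G,X))$, whereas $\Psi(\pi(g))$ a priori lives in the larger algebra $C(\partial(\G,\X))$; this gap is precisely what the density statement in Proposition~\ref{Kaw3.3}(v) is used to close.
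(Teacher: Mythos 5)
Your proof is correct and follows essentially the same route as the paper's: unitize, observe that $\tilde\rho(C(\X))\subseteq C^*_{\pi\times\rho}(\G,X)+\C 1$ so the boundary-map condition holds on the unitization before extending, apply $\G$-injectivity of $C(\partial(\G,\X))$, and prove uniqueness via the multiplicative domain together with density of $\partial(\G,X)$ in $\partial(\G,\X)$. The only cosmetic differences are that the paper evaluates against a single $f\in C_0(X)$ with $f(\b_X(y))=1$ at each point $y$ rather than an approximate unit, and that the compact case is trivial rather than vacuous.
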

\begin{proof}
If $X$ is compact then there is nothing to prove. So, assume $X$ is non-compact.
Let us first show that there exists an extension. Let $\psi$ be a $(\G,X)$-boundary map on $A:=C^*_{\pi\times\rho}(\G,X)$. By \cite[Proposition 2.2.1]{BO}, we can extend $\psi$ to a ucp equivariant map $\tilde{\psi}$ from the unitization $\tilde{A}$ to $C(\partial(\G,\X))$. Using $\G$-injectivitiy of $C(\partial(\G,\X))$, we can extend $\tilde{\psi}$ to a $(\G,\X)$-boundary map on $C^*_{\pi\times\tilde{\rho}}(\G,\X)$.

Now let us prove uniqueness. Let $\varphi$ be a $(\G,\X)$-boundary map on $C^*_{\pi\times\tilde{\rho}}(\G,\X)$ which extends $\psi$. Given $y\in\partial(\G,X)$, take $f\in C_0(X)$ such that $f(\b_X(y))=1$. Given $g\in\G$, we have 
$$\varphi(\pi(g))(y)=(f\circ\b_X)(y)\varphi(\pi(g))(y)=\varphi(\rho(f)\pi(g))(y)=\psi(\rho(f)\pi(g))(y).$$
Since $\partial(\G,X)$ is dense in $\partial(\G,\X)$ (Proposition~\ref{Kaw3.3}), we conclude that $\varphi$ is uniquely determined by $\psi$.
\end{proof}

Let $X$ be a locally compact $\G$-space and $A$ a $(\G,X)$-$C^*$-algebra. Given a $(\G,X)$-boundary map on $A$, let $I_\psi:=\{a\in A:\psi(a^*a)=0\}$. It follows from the Cauchy-Schwartz inequality that $I_\psi$ is a left ideal of $A$.

The next two results give a connection between the ideal structure of a $(\G,X)$-$C^*$-algebra and $(\G,X)$-boundaries map on it.

\begin{proposition}\label{prop:ideal1}
Let $X$ be a minimal locally compact $\G$-space and $A$ a $(\G,X)$-$C^*$-algebra. For every every proper ideal $J\unlhd A$, there is a boundary map $\psi$ on $A$ such that $J\subset I_\psi$.  

In particular, if $A$ admits a unique boundary map $\psi$, then $I_\psi$ contains all proper ideals of $A$.
\end{proposition}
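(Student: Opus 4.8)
The plan is to push the problem to the quotient $A/J$ and reduce to the existence statement already established in Proposition~\ref{Kaw3.3}(iv). Since $J$ is a $\G$-invariant ideal, the quotient $A/J$ carries an induced $\G$-action for which the quotient $*$-homomorphism $q\colon A\to A/J$ is equivariant; thus $A/J$ is a $\G$-$C^*$-algebra and $q\circ\iota\colon C_0(X)\to A/J$ is an equivariant $*$-homomorphism. If I can show that $(A/J,\,q\circ\iota)$ is again a $(\G,X)$-$C^*$-algebra, then Proposition~\ref{Kaw3.3}(iv) yields a $(\G,X)$-boundary map $\bar\psi\colon A/J\to C_0(\partial(\G,X))$, and I will take $\psi:=\bar\psi\circ q$.

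The crux is to verify that $q\circ\iota$ is injective and nondegenerate. Its kernel is $\iota^{-1}(J)$, which by equivariance of $\iota$ and $\G$-invariance of $J$ is a $\G$-invariant ideal of $C_0(X)$, hence of the form $C_0(U)$ for a $\G$-invariant open set $U\subseteq X$. Minimality of $X$ leaves only $U=\emptyset$ or $U=X$. The case $U=X$ means $\iota(C_0(X))\subseteq J$; since $J$ is an ideal and $A=\overline{\mathrm{span}}\{\iota(f)a:f\in C_0(X),\,a\in A\}$ by nondegeneracy, this would force $A\subseteq J$, contradicting that $J$ is proper. Hence $U=\emptyset$ and $q\circ\iota$ is injective, while its nondegeneracy is inherited from surjectivity of $q$. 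This is the step where minimality, properness, and nondegeneracy are all genuinely used, and it is the main point of the argument; everything else is formal.

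It remains to record that $\psi=\bar\psi\circ q$ is a boundary map and to read off the conclusion. As a composition of an equivariant $*$-homomorphism with an equivariant ccp map, $\psi$ is equivariant and ccp, and $\psi(\iota(f))=\bar\psi(q(\iota(f)))=f\circ\b_X$ because $\bar\psi$ is a boundary map for $(A/J,\,q\circ\iota)$. For $a\in J$ we have $q(a)=0$, so $\psi(a^*a)=\bar\psi\bigl(q(a)^*q(a)\bigr)=0$; thus $J\subseteq I_\psi$. Finally, if $A$ admits a unique boundary map $\psi$, then for every proper ideal $J$ the boundary map constructed above necessarily coincides with $\psi$, and therefore $J\subseteq I_\psi$, giving the stated conclusion.
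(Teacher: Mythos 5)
Your proof is correct and follows essentially the same route as the paper's: pass to the quotient $A/J$, observe that $(A/J,\,q\circ\iota)$ is again a $(\G,X)$-$C^*$-algebra (your verification via minimality, nondegeneracy and properness of $J$ is exactly the content the paper compresses into the word ``clearly''), apply Proposition~\ref{Kaw3.3}(iv) to obtain a boundary map on the quotient, and pull it back along $q$. One caveat, which you share with the paper's own proof rather than deviate from it: your opening assertion that $J$ is $\G$-invariant is not among the stated hypotheses, yet it is indispensable for the $\G$-action to descend to $A/J$ (and the statement genuinely fails for non-invariant ideals, e.g.\ $\G=\Z$ acting on $A=C(\Z\cup\{\pm\infty\})$ with $X$ a point and $J=\ker(\mathrm{ev}_0)$, where the only state killing $J$ is the non-invariant $\delta_0$); this is harmless in the paper's applications, such as Theorem~\ref{thm:C*irr}, where the $\G$-action is implemented by unitaries in the multiplier algebra and hence every ideal is automatically invariant.
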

\begin{proof}
Given a proper ideal $J\unlhd A$, let $q\colon A\to A/J$ be the canonical quotient map. Clearly, ${A}/{J}$ is also a $(\G,X)$-$C^*$-algebra since $\G\act X$ is minimal. Let $\varphi$ be a $(\G,X)$-boundary map on ${A}/{J}$. Then $\psi:=\varphi\circ q$ is a $(\G,X)$-boundary map on ${A}$ and $J\subset I_{\psi}$.
\end{proof}

\begin{proposition}\label{prop:ideal}
Let $X$ be a minimal locally compact $\G$-space. Given $(\pi,\rho)\in\Rep(\G, X)$ and a $(\G, X)$-boundary map $\psi$ on $C^*_{\pi\times\rho}(\G,X)$, we have that $I_\psi$ is an ideal of $C^*_{\pi\times\rho}(\G,X)$.

In particular, if $C^*_{\pi\times\rho}(\G,X)$ admits a unique boundary map $\psi$, then $C^*_{\pi\times\rho}(\G,X)/I_\psi$ is the unique non-zero simple quotient of $C^*_{\pi\times\rho}(\G,X)$.
\end{proposition}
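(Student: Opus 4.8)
The plan is to prove the first assertion by upgrading the (automatically closed) left ideal $I_\psi$ to a two-sided ideal, and to deduce the ``in particular'' statement from Proposition~\ref{prop:ideal1}. Throughout write $A:=C^*_{\pi\times\rho}(\G,X)$, and recall that its $\G$-action is implemented by $\mathrm{Ad}\,\pi(g)$, so that covariance reads $\pi(g)\rho(f)\pi(g)^*=\rho(g f)$ and equivariance of $\psi$ reads $\psi(\pi(g)a\pi(g)^*)=g\cdot\psi(a)$. Since $a\mapsto\psi(a^*a)$ is continuous and $I_\psi$ is already known to be a left ideal, it suffices to show that $I_\psi$ is a right ideal. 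For this the key claim I would establish is: \emph{if $c\in A$ satisfies $c\geq 0$ and $\psi(c)=0$, then $\psi(b^*cb)=0$ for every $b\in A$.} Granting the claim and taking $c=a^*a$ with $a\in I_\psi$, one gets $\psi\bigl((ab)^*(ab)\bigr)=\psi(b^*a^*ab)=0$, so $ab\in I_\psi$; hence $I_\psi$ is a right ideal, and therefore a two-sided (closed) ideal.

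To prove the claim I would first check it on the generators $b=\rho(f)\pi(g)$. Here $b^*cb=\pi(g)^*\,d\,\pi(g)$ with $d:=\rho(\bar f)\,c\,\rho(f)$, and since $\rho(C_0(X))$ lies in the multiplicative domain of $\psi$ one has $\psi(d)=|f\circ\b_X|^2\,\psi(c)=0$; equivariance then gives
\[
\psi(b^*cb)=\psi\bigl(\pi(g)^*\,d\,\pi(g)\bigr)=\psi\bigl(g^{-1}\!\cdot d\bigr)=g^{-1}\!\cdot\psi(d)=0 .
\]
Importantly, this uses only the defining properties of a boundary map (the multiplicative domain of $\rho(C_0(X))$ and equivariance), so there is no circularity with what we are trying to prove.

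The passage from generators to arbitrary $b\in A$ is the step I expect to be the main obstacle, because $b\mapsto\psi(b^*cb)$ is quadratic rather than linear, so vanishing on a spanning set is not by itself enough. I would handle this by fixing $y\in\partial(\G,X)$ and considering the form $S_y(b_1,b_2):=\psi(b_1^*cb_2)(y)$. As $c\geq 0$ and $\mathrm{ev}_y\circ\psi$ is a positive functional, $S_y$ is a positive semidefinite sesquilinear form, jointly continuous since $|S_y(b_1,b_2)|\leq\|b_1\|\,\|c\|\,\|b_2\|$, hence it satisfies $|S_y(b_1,b_2)|^2\leq S_y(b_1,b_1)\,S_y(b_2,b_2)$. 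The generator computation gives $S_y(b,b)=0$ for every $b$ in the spanning set $\{\rho(f)\pi(g)\}$; Cauchy--Schwarz then forces $S_y$ to vanish on all pairs of generators, sesquilinearity extends this to their linear span, and continuity extends it to all of $A$. Since $y$ was arbitrary, $\psi(b^*cb)=0$ for every $b\in A$, completing the claim and thus the proof that $I_\psi$ is an ideal.

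Finally, for the ``in particular'' statement I would argue as follows. The ideal $I_\psi$ is proper: for $f\neq 0$ one has $\psi(\rho(f)^*\rho(f))=\widetilde{\b_X}(|f|^2)\neq 0$ by injectivity of the embedding $\widetilde{\b_X}$, so $\rho(f)\notin I_\psi$. By Proposition~\ref{prop:ideal1}, uniqueness of the boundary map forces every proper ideal of $A$ to be contained in $I_\psi$; hence $I_\psi$ is the unique maximal ideal and $A/I_\psi$ is simple. Moreover, any nonzero simple quotient $A/J$ has $J$ proper, so $J\subseteq I_\psi$ and there is a surjection $A/J\twoheadrightarrow A/I_\psi$ onto a nonzero algebra; simplicity of $A/J$ then forces $J=I_\psi$. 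This identifies $A/I_\psi$ as the unique nonzero simple quotient of $A$.
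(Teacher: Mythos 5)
Your proof is correct and follows essentially the same route as the paper: the paper's (one-sentence) argument shows $I_\psi$ is a right ideal using exactly your three ingredients — equivariance of $\psi$, the multiplicative domain containing $\rho(C_0(X))$, and generation by the elements $\rho(f)\pi(g)$ — and then combines this with Proposition~\ref{prop:ideal1} for the ``in particular'' claim. Your Cauchy--Schwarz argument on the form $S_y$ simply fills in the generators-to-general-elements step that the paper leaves implicit (one could also note that $\{b\in A: ab\in I_\psi\}$ is a closed subspace, since $I_\psi$ is), so there is no substantive difference in approach.
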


\begin{proof}
The left ideal $I_\psi$ is a right ideal because $\psi$ is $\G$-equivariant, $\rho(C_0(X))$ is in the multiplicative domain of $\psi$ and $C^*_{\pi\times\rho}(\G,X)$ is generated by elements of the form $\rho(f)\pi(g)$, for $f\in C_0(X)$ and $g\in\G$.
\end{proof}


Because of the next result,  we assume in the rest of Section~\ref{sec:bdm} and in Section~\ref{sec:simpl} that $\G$ is a \emph{countable} group. Notice that, if the $\G$-spaces in Lemma~\ref{lem:int} are compact, then it is not necessary to assume that $\G$ is countable (\cite[Lemma 3.2]{BKKO}).  

\begin{lemma}\label{lem:int}
Let $\G$ be a countable group. A continuous equivariant map $\pi\colon Y\to X$ between minimal locally compact $\G$-spaces sends sets of non-empty interior to sets of non-empty interior.
\end{lemma}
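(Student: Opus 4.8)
The plan is to prove the contrapositive in spirit: I want to show that if $U\subset Y$ is open and non-empty, then $\pi(U)$ has non-empty interior in $X$. The natural strategy is to exploit minimality of both spaces together with the countability of $\G$ via a Baire category argument. First I would observe that since $\pi$ is equivariant and continuous, the orbit $\G U=\bigcup_{g\in\G}gU$ is an open, $\G$-invariant, non-empty subset of $Y$; by minimality of $Y$ it must be dense, and in fact (being open and invariant in a minimal space) equal to all of $Y$. Consequently $\pi(\G U)=\G\,\pi(U)=X$, so the countable family of translates $\{g\,\pi(U):g\in\G\}$ covers $X$.

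The key step is then a Baire category application. Here I must be slightly careful, because $\pi(U)$ need not be open a priori, so I cannot directly invoke Baire on open sets. Instead I would work with the closures: from $X=\bigcup_{g\in\G}g\,\pi(U)$ we get $X=\bigcup_{g\in\G}\overline{g\,\pi(U)}=\bigcup_{g\in\G}g\,\overline{\pi(U)}$. Since $X$ is locally compact Hausdorff, it is a Baire space, and $\G$ is countable, so at least one of the closed sets $g\,\overline{\pi(U)}$ must have non-empty interior. As the $\G$-action is by homeomorphisms, this forces $\overline{\pi(U)}$ itself to have non-empty interior, i.e. there is a non-empty open set $V\subset\overline{\pi(U)}$.

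The remaining, and I expect main, obstacle is upgrading ``$\pi(U)$ is somewhere dense'' (i.e. $\overline{\pi(U)}$ has interior) to ``$\pi(U)$ has interior.'' This is where minimality of $Y$ and properness/compactness considerations should enter. The plan is to pick a point $x$ in the interior $V$ of $\overline{\pi(U)}$ and a preimage $y\in\pi^{-1}(x)$. Using minimality of $Y$, the orbit of any point meets $U$, and I would try to choose $y$ so that some translate $g y$ lands in $U$; replacing $U$ by a translate (and $x$ by $g x$, which stays in an open subset of $\overline{\pi(U)}$ by equivariance) I may assume $y\in U$. Then I want to produce an open neighborhood of $x$ contained in $\pi(U)$ by a local surjectivity or open-mapping type argument. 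The cleanest route is to shrink $U$ to a relatively compact open set with $\overline{U}$ compact, so that $\pi(\overline{U})$ is compact hence closed, and argue that the interior of $\pi(\overline{U})$ is contained in $\pi(U)$; combined with the Baire step this interior is non-empty. The delicate point to verify is precisely that the interior of $\pi(\overline U)$ lands inside the image of the \emph{open} set $U$ rather than only its closure, and here one uses that boundary points of $U$ map to a set that is, by the category argument applied to the complement, nowhere dense—so they cannot fill up an open subset of $\operatorname{int}\pi(\overline U)$.

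I expect the Baire category reduction to be routine; the genuine difficulty is the final passage from closures back to images of open sets, and I would be prepared to invoke local compactness essentially (relatively compact neighborhoods, properness of $\pi$ on closures) to make the open set appear, rather than hoping $\pi$ is an open map, which it need not be.
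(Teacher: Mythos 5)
Your overall strategy is the paper's: minimality gives a countable cover of $X$ by $\G$-translates of a closed set built from $\pi(U)$, and the Baire property of the locally compact space $X$ forces that closed set to have non-empty interior. (Both your argument and the paper's take for granted, at the step $\G\pi(U)=X$, that $\pi$ is surjective; this is not a point of difference, and it does hold for the map $\b_X$, which is the only map the lemma is applied to.) The genuine gap is your final step. You try to pass from ``$\overline{\pi(U)}$ has interior'' to ``$\pi(U)$ has interior'' by proving $\interior\pi(\overline{U})\subset\pi(U)$ for a relatively compact shrinking of $U$, justified by the assertion that $\pi(\partial U)$ is nowhere dense. Both the claim and the justification are false in general. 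Take $\G=\Z$ acting on $Y=\T^2$ by translation by $(\alpha,\beta)$ with $1,\alpha,\beta$ rationally independent (a minimal action), $X=\T$ with the rotation by $\alpha$, and $\pi$ the first-coordinate projection. For $U=(\T\setminus\{p\})\times J$, with $J$ a small open arc and $p\in\T$, one has $\pi(U)=\T\setminus\{p\}$ while $\pi(\overline{U})=\T$, so $\interior\pi(\overline{U})\not\subset\pi(U)$; moreover $\pi(\partial U)=\T$, so the image of a nowhere dense set under an equivariant map of minimal systems can be everything. Equivariance gives no local openness of any kind, and no argument along the lines you sketch can close this step.

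The reason the paper's proof is three lines is that it never compares the image of an open set with the image of its closure. Using local compactness, choose a non-empty open $V$ with $\overline{V}$ compact and $\overline{V}\subset U$, where $U$ is the \emph{original} set of non-empty interior. Minimality gives $Y=\bigcup_{g\in\G}gV$, hence $X=\bigcup_{g\in\G}g\pi(\overline{V})$, a countable union of compact (hence closed) sets; Baire gives $\interior\pi(\overline{V})\neq\emptyset$; and one is done immediately, because $\pi(\overline{V})\subset\pi(U)$ holds by the choice $\overline{V}\subset U$. The conclusion of the lemma concerns $\pi(U)$, not $\pi(V)$, so no upgrading from closures back to images of open sets is ever needed. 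Your argument becomes correct, and essentially identical to the paper's, if you apply Baire to the sets $g\pi(\overline{V})$ and invoke the inclusion $\overline{V}\subset U$ instead of attempting to prove $\interior\pi(\overline{U})\subset\pi(U)$.
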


\begin{proof}
Let $U\subset Y$ be a set of non-empty interior and take $V\subset U$ a non-empty open subset such that $\overline{V}\subset U$ and $\overline{V}$ is compact. By minimality,  we have that $Y=\bigcup_{g\in\G}gV$, hence $X=\bigcup_{g\in\G}g\pi(\overline{V})$.  Since $\G$ is countable, we conclude that $\pi(\overline{V})\subset\pi(U)$ has non-empty interior.
\end{proof}

\begin{prop}\label{supp}
Let $X$ be a minimal locally compact $\G$-space, $(\pi,\rho)\in\Rep(\G,X)$, and $\psi$ a $(\G,\X)$-boundary map on $C^*_{\pi\times\tilde{\rho}}(\G,\X)$. Then for every $g\in\G$ we have  
$$\supp\psi(\pi(g))\subset \overline{\b_{\hspace{-0.1em} X}^{-1}(\interior X^g)}^{\partial(\G,\X)}.$$
\end{prop}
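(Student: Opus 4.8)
The plan is to combine the covariance relation, which controls $\psi(\pi(g))$ in terms of the image point $\b_{\X}(y)\in\X$, with the structural properties of $\partial(\G,\X)$ from Proposition~\ref{Kaw3.3}, which let me upgrade ``$g$ fixes $\b_{\X}(y)$'' to ``$g$ fixes a whole neighborhood of $y$, whose image under $\b_X$ lands in $\interior X^g$''.

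First I would extract the pointwise constraint coming from the multiplicative domain. Write $h:=\psi(\pi(g))\in C(\partial(\G,\X))$ and $\b:=\b_{\X}$. Since $\tilde\rho(C(\X))$ lies in the multiplicative domain of $\psi$, for every $f\in C(\X)$ one has
\[
\psi(\tilde\rho(f)\pi(g))=(f\circ\b)\,h,\qquad \psi(\pi(g)\tilde\rho(f))=h\,(f\circ\b),
\]
while the covariance relation $\pi(g)\tilde\rho(f)=\tilde\rho(g\cdot f)\pi(g)$ rewrites the second expression as $((g\cdot f)\circ\b)\,h$. Comparing these gives $h(y)\,[f(\b(y))-f(g^{-1}\b(y))]=0$ for all $f$ and all $y$, so $h(y)\neq0$ forces $g\b(y)=\b(y)$; that is, $\supp h\subset\b^{-1}(\X^g)$. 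Applying equivariance to $\pi(g)=\pi(g)\pi(g)\pi(g)^{-1}$ also shows that $h$ is $g$-invariant, a fact I expect to need below.

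Next I would exploit Proposition~\ref{Kaw3.3}(iii): since $\G_y=\G_y^0$ for every $y\in\partial(\G,\X)$, the fixed set $\Fix(g):=\{y:gy=y\}$ is open (each of its points is fixed together with a neighborhood), hence clopen, and its complement $O$ is a clopen $g$-invariant set on which $g$ acts without fixed points. I would then dispose of $\Fix(g)$ directly: for $y\in\Fix(g)$ choose a neighborhood $W$ fixed pointwise by $g$; by density of $\partial(\G,X)$ (Proposition~\ref{Kaw3.3}(v)) every neighborhood $N\subset W$ of $y$ meets the minimal space $\partial(\G,X)$, and $\b_X(N\cap\partial(\G,X))\subset X^g$ has nonempty interior by Lemma~\ref{lem:int}, applied to $\b_X$ between the minimal spaces $\partial(\G,X)$ and $X$ (using Proposition~\ref{Kaw3.3}(vi)). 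Hence $N$ meets $\b_X^{-1}(\interior X^g)$, so $\Fix(g)\subset\overline{\b_X^{-1}(\interior X^g)}$, the closure taken in $\partial(\G,\X)$. It therefore remains to prove that $h$ vanishes on the free part $O$.

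The main obstacle is precisely this last step. Since $O$ is a clopen, $g$-invariant subset of the Stonean (hence zero-dimensional) space $\partial(\G,\X)$ on which $g$ acts freely, a standard compactness argument produces a finite clopen partition $O=\bigsqcup_{j}P_j$ with $gP_j\cap P_j=\emptyset$, and it suffices to show $h|_{P_j}=0$ for each $j$. The difficulty is that the indicators $\one_{P_j}$ are \emph{not} available in the multiplicative domain of $\psi$: that domain contains only $\tilde\rho(C(\X))$, whose elements factor through $\b$ and therefore cannot separate points lying in a common fibre of $\b$, which is exactly the situation on $O$ (there $\b(y)\in\X^g$ is $g$-fixed while $y$ is moved). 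I expect to overcome this not by a naive compression but by combining the $g$-invariance and fibrewise support of $h$ with a positivity (Cauchy--Schwarz) estimate for $\psi$ across the disjoint clopen sets $P_j$ and $gP_j$; alternatively, by localizing at a fibre $\b^{-1}(x_0)$ with $x_0\in\X^g$, considering the states $\phi_y:=\mathrm{ev}_y\circ\psi$, and using amenability of the stabilizers $\G_y$ (Proposition~\ref{Kaw3.3}(iii)) to force $h(y)=\phi_y(\pi(g))$ to vanish when $g\notin\G_y$. Carrying out this fibrewise vanishing is the crux of the argument.
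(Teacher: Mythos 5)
Your reduction is partially correct but the proof is genuinely incomplete, at exactly the point you flag as the crux. Step 1 (the covariance/multiplicative-domain computation giving $\supp\psi(\pi(g))\subset\b_{\X}^{-1}(\X^g)$) is right, and so is your argument that $\Fix(g)\subset\overline{\b_X^{-1}(\interior X^g)}$ via density and Lemma~\ref{lem:int}. But nothing is actually proved on the free part $O$: the Cauchy--Schwarz idea across $P_j$ and $gP_j$ is not carried out (and the compression by $\one_{P_j}$ is blocked for the reason you yourself give), while the fibrewise strategy via amenability of $\G_y$ has no evident mechanism --- amenability of the stabilizers plays no role in controlling the scalar $\psi(\pi(g))(y)$, and in fact the paper never needs any fixed/free decomposition or any fibrewise analysis at all.

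The missing idea is simply to apply your pointwise constraint on a whole open set rather than at a single point: nonvanishing of $\psi(\pi(g))$ is an open condition. Suppose $y\in\supp\psi(\pi(g))$ but $y\notin\overline{\b_X^{-1}(\interior X^g)}$. By density of $\partial(\G,X)$ in $\partial(\G,\X)$ (Proposition~\ref{Kaw3.3}(v)) and continuity, there is a nonempty open set $U\subset\partial(\G,X)$ with $U\cap\b_X^{-1}(\interior X^g)=\emptyset$ on which $\psi(\pi(g))$ is nowhere zero. Your Step 1, applied at every $z\in U$, gives $\b_X(U)\subset X^g$. But Lemma~\ref{lem:int} (applied to $\b_X$ between the minimal spaces $\partial(\G,X)$ and $X$; recall $\G$ is countable in this section and $\partial(\G,X)$ is minimal by Proposition~\ref{Kaw3.3}(vi)) says $\b_X(U)$ has nonempty interior, so $\emptyset\neq\interior\b_X(U)\subset\interior X^g$; any $z\in U$ with $\b_X(z)\in\interior\b_X(U)$ then lies in $U\cap\b_X^{-1}(\interior X^g)$, a contradiction. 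This is precisely the paper's proof, phrased contrapositively: there one finds $u\in U$ with $\b_X(u)\notin X^g$ (possible since $\b_X(U)$ has nonempty interior disjoint from $\interior X^g$) and concludes $\psi(\pi(g))(u)=0$ by the state argument of \cite[Lemma 2.2]{HartKal}, which is your Step 1 at the point $u$. So the obstacle you isolate on $O$ dissolves once continuity is used to propagate the constraint ``$\b_X(z)$ is $g$-fixed'' over an open set, which Lemma~\ref{lem:int} then converts into a point of $\b_X^{-1}(\interior X^g)$.
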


\begin{proof}
Fix $g\in \G$. For the sake of contradiction, assume that there is $y\in \partial(\G,\X)$ such that
$y\notin \overline{\b_{\hspace{-0.1em} X}^{-1}(\interior X^g)}^{\partial(\G,\X)}$ and  $\psi(\pi(g))(y)\neq 0$.  Since $\partial(\G,X)$ is dense in $\partial(\G,\X)$, we may assume $y\in\partial(\G,X)$. Then there is an open neighborhood $U\subset\partial(\G,X)$ of $y$ such that $U\cap \b_{\hspace{-0.1em} X}^{-1}(\interior X^g)=\emptyset$ and 
\begin{equation}\label{eq:piz}
\psi(\pi(g))(z)\neq 0
\end{equation}
 for any $z\in U$.

By Lemma~\ref{lem:int}, $\b_{\hspace{-0.1em} X}(U)$ has non-empty interior. Since $$\b_{\hspace{-0.1em} X}(U)\cap\interior X^g=\emptyset,$$ there is $u\in U$ such that $\b_{\hspace{-0.1em} X}(u)\notin X^g$. Let $f\in C_0(X)$ such that $0\leq f\leq 1$, $f(\b_{\hspace{-0.1em} X}(u))=1$ and $f(g^{-1}\b_{\hspace{-0.1em} X}(u))=0$.

Notice that $\psi(\rho(f))(u)=f(\b_{\hspace{-0.1em} X}(u))=1$ and $$\psi(\pi(g)\rho(f)\pi(g)^*)(u)=\psi(\rho(f))(g^{-1}u)=f(g^{-1}\b_{\hspace{-0.1em} X}(u))=0.$$
Applying \cite[Lemma 2.2]{HartKal} to the state $\delta_u\circ\psi$, we conclude that $\psi(\pi(g))(u)=0$, which contradicts \eqref{eq:piz}.
\end{proof}

Before we proceed to our main result on uniqueness of boundary maps and its applications, let us prove as a first application of boundary maps the following generalization of the fact that a group has the unique trace property if and only if its Furstenberg boundary action is faithful if and only if its amenable radical is trivial (\cite{BKKO}, \cite{Fur}). 

\begin{theorem}\label{faithful}
Let $X$ be a minimal compact $\G$-space. The following conditions are equivalent:
\begin{enumerate}
\item[(i)] The action $\G\act\partial(\G,X)$ is faithful.
\item[(ii)] The canonical conditional expectation $\psi\colon C(X)\rtimes_r\G\to C(X)$ is the unique equivariant conditional expectation such that  $\psi(C^*_r(\G))\subset\C1_{C(X)}$.
\item[(iii)] For any $x\in X$, the stabilizer $\G_x$ does not contain any non-trivial amenable normal subgroup of $\G$.
\item[(iv)] There is $x\in X$ such that $\G_x^0$ does not contain any non-trivial amenable normal subgroup of $\G$.
\end{enumerate}
\end{theorem}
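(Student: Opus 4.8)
The plan is to prove the three equivalences (iii)$\Leftrightarrow$(iv), (i)$\Leftrightarrow$(iii) and (ii)$\Leftrightarrow$(iii) separately. Throughout I would record one elementary observation first: if $N\unlhd\G$ is amenable and $N\subseteq\G_x$ for some $x\in X$, then by normality $N=gNg^{-1}\subseteq\G_{gx}$ for every $g$, so $N$ fixes the orbit $\G x$, which is dense by minimality; since each fixed set $X^n$ is closed, $N$ acts trivially on all of $X$, and in particular $N\subseteq\G_z^0$ for every $z\in X$. Granting this, (iii)$\Leftrightarrow$(iv) is immediate: (iii)$\Rightarrow$(iv) holds because $\G_z^0\leq\G_z$, while for (iv)$\Rightarrow$(iii) a nontrivial amenable normal $N\subseteq\G_x$ would, by the observation, lie in $\G_z^0$ for the witnessing $z$, contradicting (iv).

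For (i)$\Leftrightarrow$(iii) I would argue by contraposition in both directions. If (i) fails, the kernel $K$ of $\G\act\partial(\G,X)$ is a nontrivial normal subgroup contained in every $\G_y$ ($y\in\partial(\G,X)$), hence amenable by Proposition~\ref{Kaw3.3}(iii); pushing forward along the surjection $\b_X$ gives $K\subseteq\G_x$ for all $x\in X$, so (iii) fails. Conversely, if (iii) fails, pick a nontrivial amenable normal $N\subseteq\G_x$, which by the observation acts trivially on $X$. The crux is to upgrade this to triviality on $\partial(\G,X)$: choosing $y_0$ with $\b_X(y_0)=x_0$, the fibre $\b_X^{-1}(x_0)$ is a compact $N$-invariant set, so amenability of $N$ yields an $N$-invariant $\nu\in\cP(\b_X^{-1}(x_0))$ with $(\b_X)_*\nu=\delta_{x_0}$. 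Since $(\partial(\G,X),\b_X)$ is a $(\G,X)$-boundary (condition~(iii) of Theorem~\ref{thm:genna} holds with $\alpha=\mathrm{id}$), condition~(ii) of that theorem gives $\partial(\G,X)\subseteq\overline{\G\nu}$, i.e.\ every point mass lies in the orbit closure of $\nu$. As the set of $N$-invariant measures is weak* closed, convex and $\G$-invariant (normality of $N$), it contains $\overline{\G\nu}$ and hence all point masses; thus $N$ fixes every point of $\partial(\G,X)$, so (i) fails.

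For (ii)$\Leftrightarrow$(iii) I would first note that any equivariant conditional expectation $\psi\colon C(X)\rtimes_r\G\to C(X)$ has $C(X)$ in its multiplicative domain, so it is determined by the scalars $c_g$ with $\psi(\la_\G(g))=c_g\cdot 1$, and that $\phi:=\psi|_{C^*_r(\G)}$ (landing in $\C1$ and conjugation-invariant by equivariance) is a trace. For (iii)$\Rightarrow$(ii), composing with $\b_X$ gives a boundary map $\Psi:=\widetilde{\b_X}\circ\psi$ on $C(X)\rtimes_r\G=C^*_{\la_\G\times\rho_x}(\G,X)$; since $\Psi(\la_\G(g))=c_g\cdot 1$, Proposition~\ref{supp} forces $\b_X^{-1}(\interior X^g)$, hence $\interior X^g$, to be dense whenever $c_g\neq0$, so $X^g=X$ and $g$ lies in $K:=\ker(\G\act X)$. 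Thus $\phi$ is supported on $K$; but $\Rad(K)$ is amenable, normal in $\G$ and contained in every $\G_x$, so (iii) forces $\Rad(K)=\{e\}$, whence by \cite{BKKO} the group $K$ has the unique trace property, $\phi|_{C^*_r(K)}$ is canonical, $c_g=\delta_{g,e}$, and $\psi$ equals the canonical expectation $E$. For the converse, if (iii) fails take a nontrivial amenable normal $N\subseteq\G_x$, acting trivially on $X$, so $C(X)\rtimes_r N\cong C(X)\otimes_{\mathrm{min}}C^*_r(N)$; choosing by amenability a state $\omega$ on $C^*_r(N)$ equal to $1$ on all canonical unitaries, the map $\psi:=(\mathrm{id}\otimes\omega)\circ E^\G_N$, with $E^\G_N$ the canonical expectation onto $C(X)\rtimes_r N$, is an equivariant conditional expectation (equivariance uses $\omega\equiv1$ on $N$ together with normality) satisfying $\psi(\la_\G(n))=1\neq0$ for $n\in N\setminus\{e\}$, so (ii) fails.

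The main obstacle I anticipate is the step promoting triviality of $N$ on $X$ to triviality on $\partial(\G,X)$ in the contrapositive of (i)$\Rightarrow$(iii): one must locate $N$-invariant point masses, and a naive invariant measure on the whole space carries no information since $N$ already fixes $X$ pointwise. The resolution is to work fibrewise and feed an $N$-invariant measure on a single fibre into the strong-proximality statement Theorem~\ref{thm:genna}(ii) for the boundary itself; checking that $(\partial(\G,X),\b_X)$ qualifies as a $(\G,X)$-boundary and that the orbit closure stays inside the closed convex set of $N$-invariant measures is where the argument really turns. A secondary delicate point is the localisation of the trace in (iii)$\Rightarrow$(ii): Proposition~\ref{supp} only controls supports through $\interior X^g$, and it is the density dichotomy (a dense open fixed set forces $X^g=X$) together with the reduction to the unique trace property of $\ker(\G\act X)$ that closes the gap.
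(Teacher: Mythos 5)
Your proof is correct, but its architecture and two of its key steps differ genuinely from the paper's. The paper proves the cycle (i)$\Rightarrow$(ii)$\Rightarrow$(iii)$\Rightarrow$(iv)$\Rightarrow$(i): for (i)$\Rightarrow$(ii) it extends a competing expectation $\varphi$ by $\G$-injectivity to a ucp equivariant map on $C(\partial(\G,X))\rtimes_r\G$, uses rigidity to see that the extension is again a conditional expectation, and then applies Proposition~\ref{supp} \emph{on the boundary itself} (where $\partial(\G,\partial(\G,X))=\partial(\G,X)$), so faithfulness of $\G\act\partial(\G,X)$ forces $\varphi(\delta_g)=0$ for all $g\neq e$; its (ii)$\Rightarrow$(iii) is your counterexample expectation, built there as $E_N\circ\theta$ through the quotient $C(X)\rtimes_r\G\twoheadrightarrow C(X)\rtimes_r\frac{\G}{N}$ coming from Proposition~\ref{prop:prec} rather than as $(\id\otimes\omega)\circ E^\G_N$ --- the two constructions give literally the same map, so this portion is essentially identical. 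Where you genuinely depart is in using (iii) as a hub: your substitute for the paper's route (i)$\Rightarrow$(ii)$\Rightarrow$(iii) is a purely dynamical proof of $\neg$(iii)$\Rightarrow\neg$(i), feeding an $N$-invariant measure on a fibre of $\b_X$ into Theorem~\ref{thm:genna}(ii) (after observing that $(\partial(\G,X),\b_X)$ satisfies condition (iii) there with $\alpha=\id$) and trapping all point masses inside the weak*-closed $\G$-invariant set of $N$-invariant measures, so that $N$ acts trivially on $\partial(\G,X)$. The paper never makes this argument --- indeed it states that Theorem~\ref{thm:genna} is not used in the rest of the paper --- and it buys a direct equivalence of the dynamical conditions (i), (iii), (iv) with no $C^*$-theory at all. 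The price appears in your (iii)$\Rightarrow$(ii): after applying Proposition~\ref{supp} downstairs on $X$ to localize the trace to $K=\ker(\G\act X)$, you must quote the unique trace theorem of \cite{BKKO} for $K$. This is legitimate (the result is external and cited, so there is no circularity), but it makes your argument a reduction to the group case: when $X$ is a point, your (iii)$\Rightarrow$(ii) simply \emph{is} the quoted theorem, whereas the paper's rigidity argument is self-contained given Proposition~\ref{supp} and $\G$-injectivity, and specializes to a proof of the BKKO unique-trace theorem rather than an appeal to it. In short, your proof is more elementary on the dynamical side and less self-contained on the $C^*$ side.
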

\begin{proof}
(i)$\implies$(ii): Let $\varphi\colon C(X)\rtimes_r\G\to C(X)$ be an equivariant conditional expectation such that $\varphi(C^*_r(\G))\subset\C1_{C(X)}$.  By $\G$-injectivity, we can extend $\varphi$ to a ucp equivariant map $\tilde{\varphi}\colon C(\partial(\G,X))\rtimes_r\G\to C(\partial(\G,X))$. Furthermore, by rigidity, we have that $\tilde{\varphi}$ is a conditional expectation.  

Fix $g\in\G\setminus\{e\}$. Since $\G\act\partial(\G,X)$ is faithful,  we have that 
$$\partial(\G,X)^g\subsetneq\partial(\G,X).$$
 Note that, since $\partial(\G,X)$ is $\G$-injective, we have $\partial(\G,\partial(\G,X))=\partial(\G,X)$ and Proposition~\ref{supp} applied to the minimal $\G$-space $\partial(\G,X)$ implies that 
$$\supp\tilde{\varphi}(\delta_g)\subset\partial(\G,X)^g\subsetneq\partial(\G,X).$$
 Hence, $\varphi(\delta_g)=0$. Since $g$ was arbitrary, we conclude that $\varphi$ is the canonical conditional expectation.

(ii)$\implies$(iii): If $N$ is an amenable non-trivial normal subgroup of $\G$ such that $N\leq\G_x$ for some $x\in X$, then $(\la_{\G/N},\cP_x)\prec(\la_\G,\cP_x)$ by Proposition~\ref{prop:prec}. Hence, there is a canonical quotient $\theta\colon C(X)\rtimes_r\G\twoheadrightarrow C(X)\rtimes_r\frac{\G}{N}$. Let $E_N\colon C(X)\rtimes_r\frac{\G}{N}\to C(X)$ be the canonical conditional expectation. Then $E_N\circ\theta$ is not the canonical conditional expectation on $C(X)\rtimes_r\G$ and $E_N\circ\theta(C^*_r(\G))\subset\C 1_{C(X)}$.

(iii)$\implies$(iv) is evident.

(iv)$\implies$(i) follows from the fact that $\ker(\G\act\partial(\G,X))\subset\G_x^0$ for any $x\in X$, and is amenable by \cite[Proposition 3.3(i)]{Kaw}.
\end{proof}

\begin{corollary}
Let $X$ be a minimal compact $\G$-space such that $\G_x^0$ is amenable for some $x\in X$. Then $X$ is faithful if and only if $\partial(\G,X)$ is faithful.
\end{corollary}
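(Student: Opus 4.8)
The plan is to derive the corollary directly from Theorem~\ref{faithful}, which characterizes faithfulness of $\G\act\partial(\G,X)$ in terms of the stabilizer subgroups. The central object is the kernel of the action on $X$, that is, the normal subgroup $\ker(X):=\{g\in\G: gy=y \text{ for all } y\in X\}=\bigcap_{y\in X}\G_y$, so that $X$ is faithful precisely when $\ker(X)=\{e\}$. I would begin by recording the relevant properties of $\ker(X)$ under our hypothesis. Every element of $\ker(X)$ fixes all of $X$, hence fixes an open neighborhood of each point; in particular $\ker(X)\subseteq\G_x^0$ for the point $x$ supplied by the hypothesis, and amenability of $\G_x^0$ forces $\ker(X)$ to be amenable. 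Since $\G_x^0\subseteq\G_x$, we also have $\ker(X)\subseteq\G_x$.

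For the implication that faithfulness of $\partial(\G,X)$ forces faithfulness of $X$, I would argue as follows. If $\partial(\G,X)$ is faithful, then by Theorem~\ref{faithful} condition (iii) holds, so in particular the stabilizer $\G_x$ of the hypothesis point contains no non-trivial amenable normal subgroup of $\G$. But $\ker(X)$ is, by the previous paragraph, an amenable normal subgroup of $\G$ contained in $\G_x$; hence $\ker(X)=\{e\}$ and $X$ is faithful. This is precisely where the amenability assumption on $\G_x^0$ enters: it is what promotes $\ker(X)$ to an \emph{amenable} normal subgroup, so that condition (iii) can detect it.

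For the reverse implication, no amenability assumption is needed. The canonical map $\b_X\colon\partial(\G,X)\to X$ is continuous and equivariant, and its image is a non-empty closed $\G$-invariant subset of the minimal space $X$, hence equal to $X$; thus $\b_X$ is surjective. Consequently, any $g\in\G$ acting trivially on $\partial(\G,X)$ also acts trivially on $X=\b_X(\partial(\G,X))$, and if $X$ is faithful this yields $g=e$. Therefore $\G\act\partial(\G,X)$ is faithful.

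I do not expect a genuine obstacle here, since the substantive content is already carried by Theorem~\ref{faithful}; the corollary amounts to the observation that, under the stated hypothesis, the kernel of the action on $X$ is an amenable normal subgroup lying inside the stabilizer $\G_x$. The only point requiring minor care is to invoke the hypothesis on $\G_x^0$ solely to guarantee amenability of $\ker(X)$, while faithfulness of $\partial(\G,X)$ (via condition (iii)) is what guarantees its triviality.
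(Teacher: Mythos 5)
Your proposal is correct and follows essentially the same route as the paper: both reduce the corollary to Theorem~\ref{faithful} by observing that $\ker(\G\act X)$ lies in $\G_x^0$, hence is an amenable normal subgroup, which the theorem's stabilizer condition then forces to be trivial (the paper phrases this as the contrapositive). The reverse direction, via surjectivity and equivariance of $\b_X$, is the same "immediate" observation the paper makes.
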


\begin{proof}
Suppose $X$ is not faithful. Since $\ker(\G\act X)\leq\G_x^0$ is amenable, it follows from Theorem~\ref{faithful} that $\partial(\G,X)$ is not faithful. The other direction is immediate.
\end{proof}

The following theorem generalizes \cite[Theorem 4.4]{KalSca20}, where the same conclusion was proved for boundary actions, i.e. for minimal strongly proximal compact $\G$-spaces $X$.
\begin{theorem}\label{thm:uniq}
Let $X$ be a minimal locally compact $\G$-space and $(\pi,\rho)\in\Rep(\G, X)$ a germinal representation. Then there is a unique $(\G,X)$-boundary map $\psi$ on $ C^*_{\pi\times\rho}(\G,X)$. Furthermore, $\psi$ is given by
\[
\psi(\rho(f)\pi(g)) = (f\circ\b_X)\,\cdot\, \one_{\overline{\b_X^{-1}(\interior X^g)}}
\]
for every $g\in\G$ and $f\in C_0(X)$.
\end{theorem}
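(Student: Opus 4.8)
The plan is to reduce the entire statement to computing a single value, $\tilde\psi(\pi(g))$, and to pin this down using the germinal relation together with Proposition~\ref{supp}. Existence of \emph{some} $(\G,X)$-boundary map is already guaranteed by Proposition~\ref{Kaw3.3}(iv), so the real content is uniqueness and the explicit formula. Since $C^*_{\pi\times\rho}(\G,X)=\overline{\mathrm{span}}\{\rho(f)\pi(g):f\in C_0(X),\,g\in\G\}$ and any boundary map is contractive, it suffices to show that an arbitrary $(\G,X)$-boundary map $\psi$ takes the asserted value on every generator $\rho(f)\pi(g)$; two such maps would then agree on a dense subspace and hence coincide, yielding uniqueness.

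First I would deal with the fact that $\pi(g)$ itself need not lie in $A:=C^*_{\pi\times\rho}(\G,X)$ when $X$ is non-compact. By Proposition~\ref{prop:ext}, $\psi$ extends uniquely to a $(\G,\X)$-boundary map $\tilde\psi$ on $C^*_{\pi\times\tilde\rho}(\G,\X)$, in which $\pi(g)=\tilde\rho(1)\pi(g)$ is a genuine element and $\tilde\rho(C(\X))$ lies in the multiplicative domain of $\tilde\psi$. Since $\rho(f)$ is in the multiplicative domain, $\tilde\psi(\rho(f)\pi(g))=(f\circ\b_X)\cdot\tilde\psi(\pi(g))$, so everything reduces to identifying the element $p_g:=\tilde\psi(\pi(g))\in C(\partial(\G,\X))$.

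The key step is to show $p_g\equiv 1$ on $\b_X^{-1}(\interior X^g)$. Fix $y\in\partial(\G,X)$ with $\b_X(y)\in\interior X^g$ and choose $f\in C_0(X)$ with $0\le f\le 1$, $f(\b_X(y))=1$ and $\supp f\subset\interior X^g\subseteq X^g$. The germinal property (Definition~\ref{grprep}) gives $\pi(g)\rho(f)=\rho(f)$; applying $\tilde\psi$ and again using that $\rho(f)$ is in the multiplicative domain yields $\tilde\psi(\pi(g))\cdot(f\circ\b_X)=f\circ\b_X$. Evaluating at $y$ gives $p_g(y)=1$, and as $y$ was arbitrary, $p_g\equiv 1$ on the open set $\b_X^{-1}(\interior X^g)$, hence on its closure by continuity.

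On the other hand, Proposition~\ref{supp}—which is where minimality enters, via Lemma~\ref{lem:int}—gives $\supp p_g\subset\overline{\b_X^{-1}(\interior X^g)}$, so $p_g=0$ off this set. Because $\partial(\G,\X)$ is extremally disconnected (Proposition~\ref{Kaw3.3}(i)), the closure $\overline{\b_X^{-1}(\interior X^g)}$ is clopen and its indicator is a genuinely continuous function; combining the two containments gives $p_g=\one_{\overline{\b_X^{-1}(\interior X^g)}}$ exactly. Restricting $\tilde\psi$ back to $A$ and multiplying by $f\circ\b_X\in C_0(\partial(\G,X))$ then recovers the stated formula on $\partial(\G,X)$, and hence uniqueness. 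I expect the main delicacy to be the bookkeeping around the non-compact case: verifying that the germinal relation and the multiplicative-domain identity survive the passage to $\tilde\psi$ on $C^*_{\pi\times\tilde\rho}(\G,\X)$, and that the clopen indicator computed in $\partial(\G,\X)$ restricts correctly to an element of $C_0(\partial(\G,X))$ once it is multiplied by $f\circ\b_X$ (which vanishes off $\partial(\G,X)$ and so makes the closure harmless to read within $\partial(\G,X)$).
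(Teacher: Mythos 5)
Your proposal is correct and follows essentially the same route as the paper's proof: the germinal relation combined with the multiplicative-domain property of the boundary map pins down the value on $\b_X^{-1}(\interior X^g)$ (hence on its closure), while Proposition~\ref{prop:ext} and Proposition~\ref{supp} give the vanishing off that closure. The only cosmetic difference is that you isolate $p_g=\tilde\psi(\pi(g))$ and multiply by $f\circ\b_X$ afterwards, whereas the paper computes $\psi(\rho(f)\pi(g))$ directly against an auxiliary function $h$ supported in $X^g$; both arguments are otherwise identical.
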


\begin{proof}
Take $f\in C_0(X)$ and $g\in\G$.  Given $y\in \b_{\hspace{-0.1em} X}^{-1}(\interior X^g)$, take $h\in C_0(X)$ such that $h(\b_{\hspace{-0.1em} X}(y))=1$, and $\supp h\subset X^g$. 

Since $(\pi,\rho)$ is a germinal representation, we have that $$\psi(\rho(f){\pi}(g)){\psi}(\rho(h))=\psi(\rho(f)){\psi}(\rho(h)).$$ Evaluating both sides of this equation at $y$, we conclude that $$\psi(\rho(f)\pi(g))(y)=\psi(\rho(f))(y)=f(\b_X(y)).$$ This implies that 
$$\psi(\rho(f)\pi(g))|_{\overline{\b_X^{-1}(\interior X^g)}}=(f\circ\b_X)|_{\overline{\b_X^{-1}(\interior X^g)}}.$$
Finally, by considering the extension $\tilde{\psi}$ of $\psi$ to $C^*_{\pi\times\tilde{\rho}}(\G,X)$ as in Proposition~\ref{prop:ext} and applying Proposition~\ref{supp}, we conclude that $\psi(\rho(f)\pi(g))$ vanishes on $\partial(\G,X)\setminus\overline{\b_{\hspace{-0.1em} X}^{-1}(\interior X^g)}$.
\end{proof}

\section{Simplicity of $C^*$-algebras of covariant representations}\label{sec:simpl}

In this section, we investigate simplicity of $C^*$-algebras generated by certain covariant representations coming from stabilizers. In particular, we generalize several $C^*$-simplicity results proved in \cite[Section 5]{KalSca20}, where the case of boundary actions was considered.
Throughout the section, we assume that $\G$ is a countable group.

\begin{proposition}\label{prop:la}
Let $X$ be a minimal locally compact $\G$-space. Given $x\in X$ and $y\in\b_X^{-1}(x)$, we have that $\La:=\{g\in\G:y\in\overline{\b_X^{-1}(\interior X^g)}\}$ is a subgroup of $\G$ such that $\G_x^0\leq\La\leq\G_x$ and $C^*_{\la_{\G/\La}\times\rho_x}(\G,X)$ is simple.
\end{proposition}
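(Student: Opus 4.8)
The plan is to first pin down the algebraic structure of $\La$ and then deduce simplicity from the uniqueness of the boundary map, by showing that map is faithful. Throughout write $K_g:=\overline{\b_X^{-1}(\interior X^g)}$ for the closure taken in $\partial(\G,X)$, so that $\La=\{g\in\G: y\in K_g\}$. Since $\b_X$ is continuous, $\b_X^{-1}(\interior X^g)$ is open, and since $\partial(\G,X)$ is extremally disconnected (Proposition~\ref{Kaw3.3}(i)), each $K_g$ is clopen. The two containments are immediate: from $K_g\subseteq\b_X^{-1}(X^g)$ we get that $y\in K_g$ forces $x=\b_X(y)\in X^g$, so $\La\le\G_x$; and $g\in\G_x^0$ means $x\in\interior X^g$, whence $y\in\b_X^{-1}(\interior X^g)\subseteq K_g$, giving $\G_x^0\le\La$.

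For the group structure, $e\in\La$ because $K_e=\partial(\G,X)$, and $K_g=K_{g^{-1}}$ because $X^g=X^{g^{-1}}$, which handles inverses. The one nonroutine point — and the step I expect to be the main obstacle — is closure under products. Here I would invoke the fact that in an extremally disconnected space one has $\overline{A\cap B}=\overline A\cap\overline B$ for all open $A,B$ (this is false in general, so extremal disconnectedness of $\partial(\G,X)$ is genuinely used; it can be checked directly from the fact that closures of open sets are clopen). Granting this, since $\interior X^g\cap\interior X^h$ is open and contained in $X^g\cap X^h\subseteq X^{gh}$, hence in $\interior X^{gh}$, I obtain
\[
K_g\cap K_h=\overline{\b_X^{-1}(\interior X^g)\cap\b_X^{-1}(\interior X^h)}=\overline{\b_X^{-1}(\interior X^g\cap\interior X^h)}\subseteq K_{gh}.
\]
Thus $y\in K_g$ and $y\in K_h$ force $y\in K_{gh}$, so $\La$ is a subgroup with $\G_x^0\le\La\le\G_x$.

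With this in hand, Example~\ref{ex:grprep} shows that $(\la_{\G/\La},\rho_x)$ is a germinal representation, and minimality of $X$ (giving $\overline{\G x}=X$) makes $\rho_x$ injective, so $C^*_{\la_{\G/\La}\times\rho_x}(\G,X)$ is a $(\G,X)$-$C^*$-algebra. By Theorem~\ref{thm:uniq} it carries a unique $(\G,X)$-boundary map $\psi$, given on generators by $\psi(\rho_x(f)\la_{\G/\La}(g))=(f\circ\b_X)\cdot\one_{K_g}$. By Proposition~\ref{prop:ideal}, $I_\psi$ is a ($\G$-invariant, by equivariance of $\psi$) ideal and $C^*_{\la_{\G/\La}\times\rho_x}(\G,X)/I_\psi$ is its unique nonzero simple quotient; so it suffices to prove $I_\psi=\{0\}$.

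The heart of the simplicity statement is then the faithfulness of $\psi$, which I would extract by evaluating $\psi$ at the distinguished point $y$. Since $g\in\La\iff y\in K_g$, the formula for $\psi$ gives $(\delta_y\circ\psi)(\rho_x(f)\la_{\G/\La}(g))=f(x)\,\one_\La(g)$, which is exactly the vector state $\omega_\La(T)=\langle T\delta_\La,\delta_\La\rangle$ on the generators; hence $\delta_y\circ\psi=\omega_\La$ and $\psi(a^*a)(y)=\|a\delta_\La\|^2$ for all $a$. Now take $a\in I_\psi$. For each $g\in\G$, $\G$-invariance of $I_\psi$ gives $\la_{\G/\La}(g)^*a\,\la_{\G/\La}(g)=g^{-1}\!\cdot a\in I_\psi$, so evaluating at $y$ yields $\|a\,\delta_{g\La}\|^2=\|a\la_{\G/\La}(g)\delta_\La\|^2=\|(g^{-1}\!\cdot a)\delta_\La\|^2=0$. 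Since $\{\delta_{g\La}:g\in\G\}$ is total in $\ell^2(\G/\La)$, we conclude $a=0$, so $I_\psi=\{0\}$ and $C^*_{\la_{\G/\La}\times\rho_x}(\G,X)$ is simple.
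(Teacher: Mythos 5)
Your proof is correct, and at both substantive steps it takes a genuinely different route from the paper's (the containments $\G_x^0\leq\La\leq\G_x$ are handled the same way in both). For the subgroup property, the paper applies Theorem~\ref{thm:uniq} to an arbitrary germinal representation $(\pi,\rho)$ and observes that $\delta_y\circ\tilde{\psi}|_\G=\one_\La$ is a positive-definite function, so that the support of this characteristic function must be a subgroup; you instead argue purely topologically, via the identity $\overline{A}\cap\overline{B}=\overline{A\cap B}$ for open subsets of the extremally disconnected space $\partial(\G,X)$. That identity is indeed valid there: it follows from applying twice the general fact that $\overline{A}\cap B\subseteq\overline{A\cap B}$ when $B$ is open, using that $\overline{A}$ is itself open, and your computation $K_g\cap K_h\subseteq K_{gh}$ is right. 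For simplicity, the paper again goes through representation theory: for every germinal $(\pi,\rho)$, the GNS representation of the state $\delta_y\circ\tilde{\psi}$ is unitarily equivalent to $\la_{\G/\La}\times\rho_x$, so $C^*_{\la_{\G/\La}\times\rho_x}(\G,X)$ is a quotient (generators to generators) of $C^*_{\pi\times\rho}(\G,X)$ for \emph{every} germinal $(\pi,\rho)$; since by Lemma~\ref{lem:wcont-grprep} every representation weakly contained in $(\la_{\G/\La},\rho_x)$ is germinal, every quotient map of $C^*_{\la_{\G/\La}\times\rho_x}(\G,X)$ is then forced to be injective. You instead prove that the unique boundary map $\psi$ is faithful, by identifying $\delta_y\circ\psi$ with the vector state at $\delta_\La$ (legitimate: both functionals are bounded and agree on the dense span of the generators, using $\La\subseteq\G_x$) and then translating by $\G$-equivariance to get $a\,\delta_{g\La}=0$ for all $g$; Propositions~\ref{prop:ideal1} and~\ref{prop:ideal} then yield simplicity, as you say. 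The trade-off is this: the paper's argument is less elementary (GNS constructions, positive-definite functions) but establishes as a byproduct the universal property that $(\la_{\G/\La},\rho_x)$ is weakly contained in every germinal representation of $(\G,X)$, a fact the paper reuses later (e.g.\ in the remark following Corollary~\ref{cor:os}); your argument is more self-contained given Theorem~\ref{thm:uniq} and exhibits the unique boundary map concretely as a faithful ``diagonal coefficient'' map, but it proves only the simplicity statement itself.
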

\begin{proof}
Let us first prove that $\G_x^0\subset\La\subset\G_x$. Given $g\in\G_x^0$, we have that $x\in\interior X^g$, hence $y\in\b_X^{-1}(\interior X^g)$. Therefore, $\G_x^0\subset\La$. Given $g\in\La$, there exists a net $(z_i)\subset\b_X^{-1}(\interior X^g)$ such that $z_i\to y$. Then $gx=g\b_X(y)=\lim g\b_X(z_i)=\lim\b_X(z_i)=x$. Hence, $\La\subset\G_x$.

Let $(\pi,\rho)$ be a germinal representation of $(\G,X)$. By Theorem~\ref{thm:uniq},  $C^*_{\pi\times\tilde{\rho}}(\G,\X)$ admits a unique $(\G,\X)$-boundary map $\tilde{\psi}$, and $\tilde{\psi}(\pi(g))=\one_{\overline{\b_X^{-1}(\interior X^g)}^{\partial(\G,\X)}}$ for every $g\in\G$. The positive-definite function $\delta_y\circ\tilde{\psi}|_\G$ coincides with $\one_\La$, and therefore, 
$\La$ is a subgroup. Hence, $(\la_{\G/\La},\rho_x)$ is a germinal representation by Example~\ref{ex:grprep}. Moreover, since the GNS representation associated with $\delta_y\circ\psi$ is unitarily equivalent to $\la_{\G/\La}\times\rho_x$, we conclude that $(\pi,\rho)\prec(\la_{\G/\La},\rho_x)$. 

Since $(\pi,\rho)$ was an arbitrary germinal representation, and any $(\sigma,\varphi)\in\Rep(\G,X)$ such that $(\sigma,\varphi)\prec(\la_{\G/\La},\rho_x)$ is a germinal representation by Lemma~\ref{lem:wcont-grprep}, we conclude that $C^*_{\la_{\G/\La}\times\rho_x}(\G,X)$ is simple.
\end{proof}

\begin{corollary}\label{cor:os}
Let $X$ be a minimal locally compact $\G$-space. Then $C^*_{\la_{\G/\G_x^0}\times\rho_x}(\G,X)$ is simple for every $x\in\Xoo$.
\end{corollary}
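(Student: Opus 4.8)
The goal is to deduce Corollary~\ref{cor:os} from Proposition~\ref{prop:la}. The plan is to show that for a point $x$ in the continuity set $\Xoo$ of the open-stabilizer map, the subgroup $\La$ produced in Proposition~\ref{prop:la} coincides exactly with $\G_x^0$. Since Proposition~\ref{prop:la} already guarantees that $\G_x^0\leq\La\leq\G_x$ and that $C^*_{\la_{\G/\La}\times\rho_x}(\G,X)$ is simple, it suffices to prove the reverse inclusion $\La\subset\G_x^0$; then $\La=\G_x^0$ and the simplicity of $C^*_{\la_{\G/\G_x^0}\times\rho_x}(\G,X)$ follows immediately.

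First I would fix $y\in\b_X^{-1}(x)$ and recall that by definition $\La=\{g\in\G:y\in\overline{\b_X^{-1}(\interior X^g)}\}$. Take $g\in\La$; then there is a net $(z_i)\subset\b_X^{-1}(\interior X^g)$ with $z_i\to y$. Writing $x_i:=\b_X(z_i)$, continuity of $\b_X$ gives $x_i\to x$, and by construction $x_i\in\interior X^g$, i.e. $g\in\G_{x_i}^0$. The key step is now to use that $x\in\Xoo$: the map $\Stab^0\colon X\to\Sub(\G)$, $u\mapsto\G_u^0$, is continuous at $x$. Since $x_i\to x$, we get $\G_{x_i}^0\to\G_x^0$ in the Chabauty topology, which is the topology of pointwise convergence of characteristic functions $\one_{\G_u^0}\in\{0,1\}^\G$. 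Because $g\in\G_{x_i}^0$ for every $i$ (so $\one_{\G_{x_i}^0}(g)=1$), pointwise convergence at the coordinate $g$ forces $\one_{\G_x^0}(g)=1$, that is $g\in\G_x^0$. This establishes $\La\subset\G_x^0$, hence $\La=\G_x^0$, and Proposition~\ref{prop:la} then yields the desired simplicity.

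I expect the main (and essentially only) obstacle to be the correct handling of the Chabauty convergence: one must be careful that the net $(x_i)$ genuinely converges to $x$ and lands in the domain where $\Stab^0$ is continuous, so that membership $g\in\G_{x_i}^0$ passes to the limit. A subtle point worth double-checking is that continuity of $\Stab^0$ at $x$ only controls the \emph{limiting} value $\G_x^0$, and Chabauty convergence is coordinatewise; since we only need the single coordinate $g$ and we have $\one_{\G_{x_i}^0}(g)=1$ along the whole net, the argument is robust and no uniformity is needed. Everything else is a direct appeal to Proposition~\ref{prop:la}, so the proof should be short.
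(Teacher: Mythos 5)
Your proposal is correct and follows essentially the same route as the paper: fix $y\in\b_X^{-1}(x)$, take a net $(z_i)\subset\b_X^{-1}(\interior X^g)$ converging to $y$, use continuity of $\Stab^0$ at $x\in\Xoo$ to get $\G_{\b_X(z_i)}^0\to\G_x^0$ in the Chabauty topology, and conclude $g\in\G_x^0$ from pointwise convergence of the characteristic functions, so that $\La=\G_x^0$ and Proposition~\ref{prop:la} applies. Your extra care about the coordinatewise nature of Chabauty convergence is exactly the (implicit) content of the paper's one-line deduction.
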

\begin{proof}
Fix $y\in\b_X^{-1}(x)$. Given $g\in\G$ such that $y\in\overline{\b_X^{-1}(\interior X^g)}$, take a net $(z_i)\subset\b_X^{-1}(\interior X^g)$ such that $z_i\to y$. Since $x\in\Xoo$, we have that $\G_{\b_x(z_i)}^0\to\G_x^0$. Moreover, since $g\in\G_{\b_X(z_i)}^0$ for every $i$, it follows that $g\in\G_x^0$. Therefore, we conclude $\G_x^0=\{g\in\G:y\in\overline{\b_X^{-1}(\interior X^g)}\}$ and the result follows from Proposition~\ref{prop:la}. 
\end{proof}
\begin{remark}
Corollary \ref{cor:os} can also be deduced from the results in \cite{KM21} in the following way: Let $\cG$ be the groupoid of germs associated to $\G\act X$.  It follows from \cite[Theorem 7.26]{KM21} that the essential $C^*$-algebra $C^*_{\mathrm{ess}}(\cG)$ is simple.  Moreover, for each $x\in X$,  we have that $\cG_x$ can be identified with $\G/\G_x^0$ and, if $x\in \Xoo$, by arguing as in \cite[Remark 9.3]{KKLRU}, we get that there is a surjective $*$-homomorphism $C^*_{\mathrm{ess}}(\cG)\to C^*_{\la_{\G/\G_x^0}\times\rho_x}(\G,X)$. 
\end{remark}

\begin{remark}
In \cite[Theorem 7.7]{KalSca20}, the authors showed that, given a $\G$-boundary $X$ with Hausdorff germs (i.e., such that $\Xoo=X$) and a germinal representation $(\pi,\rho)$ of $(\G,X)$, then $(\pi,\rho)$ weakly contains the regular representation of the groupoid of germs of the action (which is given by $\bigoplus_{x\in X}(\la_{\G/\G_x^0},\rho_x)$). Corollary~\ref{cor:os} and the proof of Proposition~\ref{prop:la} imply that it suffices to assume that $X$ is a minimal compact $\G$-space with Hausdorff germs (i.e., $\G\act X$ does not necessarily have to be a boundary action).
\end{remark}

\begin{lemma}\label{aux}
Let $X$ be a minimal locally compact $\G$-space. Given $x\in X$ and $\G_x^0\leq\La\leq\G_x$, if $C^*_{\la_{\G/\La}\times\rho_x}(\G,X)$ is simple, then $\frac{\La}{\G_x^0}$ is amenable.
\end{lemma}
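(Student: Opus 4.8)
The plan is to prove the statement directly by showing that simplicity of $C^*_{\la_{\G/\La}\times\rho_x}(\G,X)$ forces the weak containment $(\la_{\G/\La},\rho_x)\prec(\la_{\G/\G_x^0},\rho_x)$, and then to read off amenability of $\La/\G_x^0$ from Proposition~\ref{prop:prec}. The key structural observation is that, since $\G_x^0\leq\La\leq\G_x$, both $(\la_{\G/\La},\rho_x)$ and $(\la_{\G/\G_x^0},\rho_x)$ are germinal representations by Example~\ref{ex:grprep}, so the machinery of Proposition~\ref{prop:la} applies to the two of them simultaneously.

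Concretely, I would first fix a point $y\in\b_X^{-1}(x)$ and set $\La_y:=\{g\in\G:y\in\overline{\b_X^{-1}(\interior X^g)}\}$ as in Proposition~\ref{prop:la}, recalling that $\La_y$ depends only on $y$ (and not on the chosen representation, since by Theorem~\ref{thm:uniq} the relevant boundary map is given on generators by $\rho(f)\pi(g)\mapsto(f\circ\b_X)\cdot\one_{\overline{\b_X^{-1}(\interior X^g)}}$), that $\G_x^0\leq\La_y\leq\G_x$, and that $C^*_{\la_{\G/\La_y}\times\rho_x}(\G,X)$ is simple. Applying the construction of Proposition~\ref{prop:la} to the germinal representation $(\la_{\G/\G_x^0},\rho_x)$, the boundary-map state at $y$ produces a surjective $*$-homomorphism $q_1\colon C^*_{\la_{\G/\G_x^0}\times\rho_x}(\G,X)\twoheadrightarrow C^*_{\la_{\G/\La_y}\times\rho_x}(\G,X)$ sending $\rho_x(f)\la_{\G/\G_x^0}(g)\mapsto\rho_x(f)\la_{\G/\La_y}(g)$; applying the same construction to $(\la_{\G/\La},\rho_x)$ gives a surjection $q_2\colon C^*_{\la_{\G/\La}\times\rho_x}(\G,X)\twoheadrightarrow C^*_{\la_{\G/\La_y}\times\rho_x}(\G,X)$ matching generators in the same way. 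Since $C^*_{\la_{\G/\La}\times\rho_x}(\G,X)$ is simple by hypothesis and $q_2$ is nonzero, $q_2$ is an isomorphism, so $q_2^{-1}\circ q_1$ is a $*$-homomorphism $C^*_{\la_{\G/\G_x^0}\times\rho_x}(\G,X)\to C^*_{\la_{\G/\La}\times\rho_x}(\G,X)$ sending $\rho_x(f)\la_{\G/\G_x^0}(g)\mapsto\rho_x(f)\la_{\G/\La}(g)$. By definition, this is exactly the weak containment $(\la_{\G/\La},\rho_x)\prec(\la_{\G/\G_x^0},\rho_x)$.

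The main obstacle I anticipate is the bookkeeping around Proposition~\ref{prop:la}: I must verify that $q_1$ and $q_2$ are genuinely the surjections matching generators (so that $q_2^{-1}\circ q_1$ does too), and that the \emph{same} subgroup $\La_y$ serves both representations — which is precisely why it matters that $\La_y$ is defined purely in terms of the boundary dynamics, independently of the representation. Once the weak containment $(\la_{\G/\La},\rho_x)\prec(\la_{\G/\G_x^0},\rho_x)$ is established, Proposition~\ref{prop:prec} applied with $\La_1=\G_x^0\leq\La_2=\La$ yields that $\G_x^0$ is co-amenable in $\La$; and since $\G_x^0$ is normal in $\G_x$, hence in $\La$, co-amenability of $\G_x^0$ in $\La$ is equivalent to amenability of the quotient $\frac{\La}{\G_x^0}$, which is the desired conclusion.
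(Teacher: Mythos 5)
Your proof is correct, and it establishes the paper's key intermediate fact by a genuinely different, self-contained mechanism. Both proofs reduce the lemma to the weak containment $(\la_{\G/\La},\rho_x)\prec(\la_{\G/\G_x^0},\rho_x^0)$ and then apply Proposition~\ref{prop:prec} together with normality of $\G_x^0$ in $\La$. The paper obtains this containment by first using simplicity to deduce that the unique $(\G,X)$-boundary map on $C^*_{\la_{\G/\La}\times\rho_x}(\G,X)$ is \emph{faithful} (Theorem~\ref{thm:uniq} plus Proposition~\ref{prop:ideal}) and then ``arguing as in \cite[Lemma 5.5]{KalSca20}'' --- an argument imported from outside the paper. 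You instead run the GNS step from the proof of Proposition~\ref{prop:la} twice, for the two germinal representations $(\la_{\G/\G_x^0},\rho_x^0)$ and $(\la_{\G/\La},\rho_x)$ (Example~\ref{ex:grprep}), producing surjections $q_1,q_2$ onto one and the same algebra $C^*_{\la_{\G/\La_y}\times\rho_x}(\G,X)$; simplicity is then used to invert $q_2$, and $q_2^{-1}\circ q_1$ witnesses the containment. Your emphasis that $\La_y$ is defined purely by the boundary dynamics --- hence is the same for both representations, thanks to the representation-independent formula in Theorem~\ref{thm:uniq} --- is precisely what makes the two targets coincide, and it is the real content of your argument. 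Two remarks for a final write-up. First, you invoke the interior of the proof of Proposition~\ref{prop:la} (that the GNS representation of $\delta_y\circ\tilde{\psi}$ is unitarily equivalent to $\la_{\G/\La_y}\times\rho_x$), not merely its statement; this step should be cited or verified directly (the state sends $\rho(f)\pi(g)$ to $f(x)\one_{\La_y}(g)$, which is the vector state of $(\la_{\G/\La_y},\rho_x)$ at the cyclic vector $\delta_{e\La_y}$, so uniqueness of GNS applies). Second, your surjections map \emph{out of} $C^*_{\pi\times\rho}(\G,X)$ onto $C^*_{\la_{\G/\La_y}\times\rho_x}(\G,X)$, i.e.\ they witness $(\la_{\G/\La_y},\rho_x)\prec(\pi,\rho)$ in the paper's convention; that is indeed the direction the GNS construction yields (and the one the simplicity argument in Proposition~\ref{prop:la} actually needs), even though the containment displayed at the end of that proof is written with the two sides interchanged --- so your reading resolves that inconsistency in the correct way.
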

\begin{proof}
We denote by $\rho_x^0\colon C_0(X)\to B(\ell^2(\G/\G_x^0))$ and $\rho_x\colon C_0(X)\to B(\ell^2(\G/\La))$ the maps defined as in \eqref{Poisson}.

If $C^*_{\la_{\G/\La}\times\rho_x}(\G,X)$ is simple, then by Proposition~\ref{prop:ideal} the unique boundary map on $C^*_{\la_{\G/\La}\times\rho_x}(\G,X)$ is faithful. Now, arguing as in \cite[Lemma 5.5]{KalSca20}, we conclude that $(\la_{\G/\La},\rho_x)\prec(\la_{\G/\G_x^0},\rho_x^0)$, hence $\frac{\La}{\G_x^0}$ is amenable by Proposition~\ref{prop:prec}.
\end{proof}

\begin{theorem}\label{thm:gk}
Let $X$ be a minimal locally compact $\G$-space. Given $x\in X$, the $C^*$-algebra $C^*_{\la_{\G/\G_x}\times\rho_x}(\G,X)$ is simple if and only if $\frac{\G_x}{\G_x^0}$ is amenable.
\end{theorem}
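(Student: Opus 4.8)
The plan is to prove the two implications separately. The forward (``only if'') direction is immediate: if $C^*_{\la_{\G/\G_x}\times\rho_x}(\G,X)$ is simple, then applying Lemma~\ref{aux} with $\La=\G_x$ (so that the hypothesis $\G_x^0\leq\La\leq\G_x$ holds trivially) yields that $\frac{\G_x}{\G_x^0}$ is amenable.

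For the converse, assume $\frac{\G_x}{\G_x^0}$ is amenable. I would fix a point $y\in\b_X^{-1}(x)$ and form the subgroup $\La:=\{g\in\G:y\in\overline{\b_X^{-1}(\interior X^g)}\}$ of Proposition~\ref{prop:la}, which simultaneously provides the containments $\G_x^0\leq\La\leq\G_x$ and the simplicity of $C^*_{\la_{\G/\La}\times\rho_x}(\G,X)$. The idea is then to transport this simplicity to $C^*_{\la_{\G/\G_x}\times\rho_x}(\G,X)$ by comparing the two representations. The crux, which I expect to be the only genuine obstacle, is to show that $\La$ is co-amenable in $\G_x$. Here I would exploit that $\G_x^0$ is normal in $\G_x$ and contained in $\La$, so that the transitive $\G_x$-action on $\G_x/\La$ factors through $\G_x/\G_x^0$; concretely, $\ell^\infty(\G_x/\La)$ is $\G_x$-equivariantly identified with $\ell^\infty\big((\G_x/\G_x^0)/(\La/\G_x^0)\big)$, with $\G_x$ acting through $\G_x/\G_x^0$. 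Since $\G_x/\G_x^0$ is amenable, its action on the homogeneous space $(\G_x/\G_x^0)/(\La/\G_x^0)$ carries an invariant mean, whose pullback is a $\G_x$-invariant mean on $\ell^\infty(\G_x/\La)$; thus $\La$ is co-amenable in $\G_x$. The care required at this step is that $\La$ need not be normal in $\G_x$, so the argument must go through invariant means on homogeneous spaces rather than through amenability of a quotient group.

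With co-amenability established, Proposition~\ref{prop:prec} (applied with $\La_1=\La$ and $\La_2=\G_x$) yields the weak containment $(\la_{\G/\G_x},\rho_x)\prec(\la_{\G/\La},\rho_x)$, that is, a $*$-homomorphism
\[
\theta\colon C^*_{\la_{\G/\La}\times\rho_x}(\G,X)\longrightarrow C^*_{\la_{\G/\G_x}\times\rho_x}(\G,X)
\]
carrying $\rho_x(f)\la_{\G/\La}(g)$ to $\rho_x(f)\la_{\G/\G_x}(g)$. This $\theta$ is surjective, since its image contains all the generators, and it is nonzero because the codomain is a nonzero $(\G,X)$-$C^*$-algebra (minimality of $X$ forces $\rho_x$ to be injective, as $f\mapsto(f(gx))_g$ vanishes only when $f|_{\overline{\G x}}=f|_X=0$). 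Since the domain $C^*_{\la_{\G/\La}\times\rho_x}(\G,X)$ is simple, $\ker\theta$ is a proper ideal and hence $\{0\}$, so $\theta$ is an isomorphism and $C^*_{\la_{\G/\G_x}\times\rho_x}(\G,X)$ is simple. This completes the converse.
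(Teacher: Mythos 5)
Your proposal is correct and follows the paper's proof essentially verbatim: forward direction via Lemma~\ref{aux}, and for the converse the subgroup $\La$ of Proposition~\ref{prop:la}, its co-amenability in $\G_x$ (which the paper asserts and you justify, correctly, by pushing an invariant mean through the amenable quotient $\G_x/\G_x^0$), and Proposition~\ref{prop:prec} to transfer simplicity along the resulting surjection. The only difference is that you fill in details the paper leaves implicit; there is no gap.
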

\begin{proof}
The forward implication follows from Lemma~\ref{aux}. 

Conversely, assume $\frac{\G_x}{\G_x^0}$ is amenable. Take $\La$ as in Proposition~\ref{prop:la}. Since $\frac{\G_x}{\G_x^0}$ is amenable, we have that $\La$ is co-amenable in $\G_x$ and, $(\la_{\G/\G_x},\rho_x)\prec(\la_{\G/\La},\rho_x)$ by Proposition~\ref{prop:prec}. Therefore, $C^*_{\la_{\G/\G_x}\times\rho_x}(\G,X)$ is simple.
\end{proof}

For the following statement we refer the reader to \cite{Anan02} for the definition of topologically amenable actions.
\begin{corollary}
Let $X$ be a topologically amenable locally compact minimal $\G$-space.
Then $C^*_{\la_{\G/\G_x}\times\rho_x}(\G,X)$ is simple for every $x\in X$.

In particular, for any minimal action $\G\act X$ of an amenable group $G$ on a locally compact space $X$, and any $x\in X$, $C^*_{\la_{\G/\G_x}\times\rho_x}(\G,X)$ is simple.
\end{corollary}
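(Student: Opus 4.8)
The plan is to read the result off Theorem~\ref{thm:gk}, which characterizes simplicity of $C^*_{\la_{\G/\G_x}\times\rho_x}(\G,X)$ by amenability of the quotient $\frac{\G_x}{\G_x^0}$. Since a quotient of an amenable group is amenable, it suffices to prove that for a topologically amenable action every stabilizer $\G_x$ is amenable; then $\frac{\G_x}{\G_x^0}$ is amenable and Theorem~\ref{thm:gk} yields simplicity at every $x\in X$.

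The main step is therefore the standard fact that topologically amenable actions have amenable stabilizers. I would argue as follows. Topological amenability provides a net of continuous maps $m_i\colon X\to\Prob(\G)$ that is asymptotically equivariant, i.e.\ $\|g\cdot m_i(z)-m_i(gz)\|_1\to 0$ uniformly for $z$ in compact subsets of $X$ and $g$ in finite subsets of $\G$. Fixing $x\in X$ and specializing to $g\in\G_x$, the relation $gx=x$ gives $\|g\cdot m_i(x)-m_i(x)\|_1\to 0$, so the probability measures $m_i(x)\in\Prob(\G)$ are asymptotically invariant under left translation by $\G_x$. Because $\G_x$ acts freely on $\G$ by left translation, any weak$^*$ cluster point of the associated means is a $\G_x$-invariant mean on $\ell^\infty(\G)$, which (as $\G$ is a free $\G_x$-set) restricts to a left-invariant mean on $\ell^\infty(\G_x)$; hence $\G_x$ is amenable. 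This is precisely the content of the stabilizer statement in \cite{Anan02}.

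Combining the two steps: for each $x\in X$ the group $\G_x$ is amenable, hence so is its quotient $\frac{\G_x}{\G_x^0}$, and Theorem~\ref{thm:gk} shows $C^*_{\la_{\G/\G_x}\times\rho_x}(\G,X)$ is simple. The ``in particular'' clause is the special case where $\G$ itself is amenable: then every subgroup $\G_x$ is amenable (equivalently, every action of an amenable group is topologically amenable), so the same conclusion applies verbatim.

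The only genuine obstacle is the passage from topological amenability to amenability of stabilizers; everything else is a direct quotation of Theorem~\ref{thm:gk} together with the permanence of amenability under quotients. Since this passage is a known result of Anantharaman-Delaroche, I expect the proof to be short, essentially a citation followed by an application of Theorem~\ref{thm:gk}.
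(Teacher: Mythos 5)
Your proposal is correct and takes essentially the same approach as the paper: the paper's proof consists of exactly the two steps you identify, namely that topological amenability of $\G\act X$ forces each stabilizer $\G_x$ to be amenable (hence so is the quotient $\frac{\G_x}{\G_x^0}$), followed by an application of Theorem~\ref{thm:gk}. The only difference is that you spell out the standard Anantharaman-Delaroche argument for amenability of stabilizers, which the paper leaves implicit as a citation.
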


\begin{proof}
If $\G\act X$ is topologically amenable, then $\G_x$ is amenable for every $x\in X$. Thus the claim follows from Theorem~\ref{thm:gk}.
\end{proof}

\begin{remark}
This result generalizes \cite[Theorem 8.1]{Kaw}, in which Kawabe showed that, given a minimal compact $\G$-space $X$ and $y\in\partial(\G,X)$, it holds that $C^*_{\la_{\G/\G_y}\times\rho_y}(\G,\partial(\G,X))$ is simple. Indeed, this statement is a consequence of Theorem~\ref{thm:gk}, since $\G_y=\G_y^0$ for every $y\in \partial(\G,X)$ (Proposition~\ref{Kaw3.3}).
\end{remark}

\begin{example}
Recall that Thompson's group $F$ is the group of homeomorphisms on $(0,1)$ which are piecewise linear, with finitely many breakpoints, all lying in $\Z[1/2]\cap(0,1)$, and whose slopes are integer powers of $2$. Clearly, $F\act (0,1)$ is minimal. 

Given $x\in \Z[1/2]\cap(0,1)$, one can readily check that $\frac{F_x}{F_x^0}\simeq \Z\times\Z$ (where the identification involves taking $\log_2$ of the left and right derivatives at $x$ of an element $g\in F_x$). Hence,  by Theorem~\ref{thm:gk},  we obtain that $C^*_{\la_{F/F_x}\times\rho_x}(F,(0,1))$ is simple.
\end{example}

\subsection*{$C^*$-irreducible inclusions}
In his recent paper \cite{Ror-C*-irr} R\o{}rdam introduced and studied the notion of irreducible inclusions of $C^*$-algebras, namely inclusions of unital simple $C^*$-algebras with the property that all intermediate $C^*$-algebras are also simple. Among the canonical examples of such inclusions are the inclusions arising from crossed products of dynamical systems. 

Our results provide more examples of such inclusions (see also \cite[Theorem 1.3]{AmruKal} and \cite[Theorem 1.5]{AmrUrs}).

In order to apply our results, we first draw the following conclusion of Theorem~\ref{thm:uniq}.

\begin{theorem}\label{thm:C*irr}
Let $Y$ be a minimal compact $\G$-space and $C(Y)\subset C(X) \subset C(\partial(\G, Y))$ be an inclusion of $\G$-$C^*$-algebras.  Given a germinal representation $(\pi,\rho)$ such that $ C^*_{\pi\times\rho}(\G,X)$ is simple, we have that the inclusion $C^*_{\pi\times\rho}(\G,Y)\subset C^*_{\pi\times\rho}(\G,X)$ is $C^*$-irreducible.  
\end{theorem}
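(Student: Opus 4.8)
The plan is to apply R\o{}rdam's characterization of $C^*$-irreducible inclusions and reduce everything to a single averaging estimate, which I would then establish through the boundary dynamics supplied by Theorem~\ref{thm:uniq}. Write $B:=C^*_{\pi\times\rho}(\G,X)$ and $A:=C^*_{\pi\times\rho}(\G,Y)$; since $C(Y)\subseteq C(X)$ we have $A\subseteq B$, and both contain the unit $\rho(1)$ and the copy $\pi(\G)$. By \cite[Theorem~3.1]{Ror-C*-irr}, the inclusion $A\subseteq B$ is $C^*$-irreducible if and only if for every $b\in B_+\setminus\{0\}$ and every $\varepsilon>0$ there exist $x_1,\dots,x_n\in A$ with $\bigl\|\sum_j x_j^* b x_j-1\bigr\|<\varepsilon$. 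I emphasize that this single condition already subsumes simplicity of $A$ and of $B$ (take $b\in A_+\setminus\{0\}$) and of every intermediate algebra $A\subseteq D\subseteq B$: if $I\lhd D$ is a nonzero ideal and $0\neq d\in I_+$, then the $x_j\in A\subseteq D$ produce an element of $I$ within $\varepsilon$ of $1$, forcing $1\in I$ and hence $I=D$. So the endpoints need no separate treatment.

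First I would record the structural facts. Since $C(Y)\subseteq C(X)\subseteq C(\partial(\G,Y))=\cI_\G(C(Y))$ and the injective envelope is the minimal $\G$-injective (essential) extension, the injective envelopes of $C(Y)$ and $C(X)$ coincide; hence $\partial(\G,X)=\partial(\G,Y)$, and the canonical maps fit into $\b_Y=p\circ\b_X$, where $p\colon X\to Y$ is the factor map dual to $C(Y)\subseteq C(X)$. As $\partial(\G,Y)$ is minimal (Proposition~\ref{Kaw3.3}(vi)) and $X$ is a $\G$-factor of it, $X$ is minimal as well, so Theorem~\ref{thm:uniq} applies to the germinal representation $(\pi,\rho)$ on $X$: the algebra $B$ carries a unique $(\G,X)$-boundary map $\psi\colon B\to C(\partial(\G,Y))$, given by the explicit formula there. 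Because $B$ is simple, Proposition~\ref{prop:ideal} gives $I_\psi=0$, i.e.\ $\psi$ is faithful; and the identity $\psi(\rho(f\circ p))=f\circ\b_Y$ shows that $\psi|_A$ is a (faithful) $(\G,Y)$-boundary map on $A$.

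The core of the argument is to produce the averaging estimate using only elements of $A$, that is, only the cut-downs $\rho(h)$ with $h\in C(Y)$ and the unitaries $\pi(g)$. Given $0\neq b\in B_+$, faithfulness of $\psi$ yields a point $\omega\in\partial(\G,Y)$ with $\psi(b)(\omega)>0$. The mechanism I would use is a Powers-type averaging over the boundary, exactly as in \cite{KalSca20} and in the intermediate-algebra results \cite[Theorem~1.3]{AmruKal} and \cite[Theorem~1.5]{AmrUrs}: conjugating $b$ by a suitable net of $\pi(g_i)$ and forming convex combinations drives the part of $b$ in $\ker\psi$ to arbitrarily small norm, leaving an element whose behaviour is governed by the diagonal function $\psi(b)$; one then localizes with cut-downs $\rho(h)$, $h\in C(Y)$, over a region of $Y$ on which $\psi(b)$ is bounded below, and scales up to reach the identity in norm. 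The two inputs that make this run are the faithfulness of $\psi$ (so that $b$ is genuinely detected on the boundary) and the relative strong proximality of the extension $\b_Y\colon\partial(\G,Y)\to Y$, which contracts the fibres of $\partial(\G,Y)\to Y$ and thereby lets the coarser $C(Y)$-localization still concentrate the relevant mass.

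The hard part, and the reason the hypothesis ``$C^*_{\pi\times\rho}(\G,X)$ simple'' rather than its $(\G,Y)$-analogue is the correct one, is precisely that the averaging must be carried out inside the smaller algebra $A$: the element $b$ and the boundary map $\psi$ see the finer space $X$ (indeed all of $\partial(\G,Y)$), whereas $A$ only supplies $C(Y)$-functions. What rescues the argument is that the extra coordinates of $X$ over $Y$ are ``boundary directions'' that the $\G$-action contracts, so that $C(Y)$-localization suffices after conjugation. Converting the boundary-value estimates coming from $\psi$ (a merely ccp map) into genuine norm estimates is the technical heart; I expect to handle it as in the norm-averaging step of \cite[Lemma~5.5]{KalSca20} (compare Lemma~\ref{aux}), together with the faithfulness of $\psi$ and the germinal formula of Theorem~\ref{thm:uniq}.
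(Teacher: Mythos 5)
Your framework (R\o{}rdam's averaging characterization of $C^*$-irreducibility) and your structural preliminaries ($\partial(\G,X)=\partial(\G,Y)$, minimality of $X$, faithfulness of the unique $(\G,X)$-boundary map on $B:=C^*_{\pi\times\rho}(\G,X)$) are fine, but there is a genuine gap at exactly the point you call the technical heart: the averaging estimate $\bigl\|\sum_j x_j^*bx_j-1\bigr\|<\varepsilon$ with $x_j\in A:=C^*_{\pi\times\rho}(\G,Y)$ is never established, only referred to analogues in \cite{KalSca20}, \cite{AmruKal}, \cite{AmrUrs}. Worse, the mechanism you sketch --- killing the $\ker\psi$-part of $b$ by convex combinations of conjugates $\pi(g_i)b\pi(g_i)^*$ --- fails in the stated generality, because Powers-type averaging over \emph{group} unitaries is a strong-proximality phenomenon, and here $Y$ is only assumed minimal. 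Concretely, let $\G=\Z$ act on $Y=X=\T$ by an irrational rotation and let $(\pi,\rho)=(\la_\Z,\rho_x)$: the action is free, so the representation is germinal, and $B=C(\T)\rtimes_r\Z$ is simple, so all hypotheses of the theorem hold. Take $b=2+fu+u^*\bar f\geq 0$, where $u:=\pi(1)$ and $f\in C(\T)$ satisfies $\|f\|_\infty\le 1$ and $\int f\,dm\neq 0$. Its off-diagonal part is $c=fu+u^*\bar f$, and the canonical trace $\tau$ of $B$ satisfies $\tau\bigl(\pi(n)c\pi(n)^*u^*\bigr)=\tau(cu^*)=\int f\,dm$ for every $n$ (traciality, since $u$ commutes with $\pi(n)=u^n$); hence every element of the closed convex hull of $\{\pi(n)c\pi(n)^*\}$ has norm at least $\bigl|\int f\,dm\bigr|>0$. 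So group-unitary averaging cannot make the off-diagonal part small; this is precisely why \cite{AmrUrs} must average over unitaries of the crossed product rather than group unitaries, and why their results concern crossed products rather than quotients $C^*_{\pi\times\rho}(\G,X)$. Appealing to ``relative strong proximality of $\b_Y$'' does not repair this: the example above has $X=Y$, so the fibres of $\partial(\G,Y)\to Y$ play no role.

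The theorem needs no averaging at all; the paper proves it by a soft rigidity argument whose ingredients you listed but did not combine. Given an intermediate algebra $A\subseteq D\subseteq B$, note that $D$ is a $(\G,Y)$-$C^*$-algebra; take \emph{any} $(\G,Y)$-boundary map $\psi$ on $D$ and extend it, by $\G$-injectivity of $C(\partial(\G,Y))$, to an equivariant ucp map $\tilde\psi\colon B\to C(\partial(\G,Y))$. Since $C(Y)\subset C(X)\subset C(\partial(\G,Y))=\cI_\G(C(Y))$ is an essential, hence rigid, extension, $\tilde\psi$ automatically restricts to the canonical embedding on all of $C(X)$, not merely on $C(Y)$; thus $\tilde\psi$ is a $(\G,X)$-boundary map on $B$, hence equals the unique one (Theorem~\ref{thm:uniq}), which is faithful by simplicity of $B$ and Proposition~\ref{prop:ideal}. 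Therefore every $(\G,Y)$-boundary map on $D$ is faithful, and Proposition~\ref{prop:ideal1} (every proper ideal of $D$ lies in $I_\psi$ for some boundary map $\psi$ on $D$) forces $D$ to be simple. This rigidity step is where the hypothesis $C(X)\subset C(\partial(\G,Y))$ is actually used, and it replaces your averaging estimate entirely; note also that it yields simplicity of each intermediate algebra directly, without passing through R\o{}rdam's fullness criterion.
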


\begin{proof}
First note that $\partial(\G, X) = \partial(\G, Y)$ canonically. By Theorem~\ref{thm:uniq}, there is a unique $(\G, X)$-boundary map on $C^*_{\pi\times\rho}(\G,X)$, which is faithful by Proposition~\ref{prop:ideal}. Let $C^*_{\pi\times\rho}(\G,Y) \subset A \subset C^*_{\pi\times\rho}(\G,X)$ be an intermediate $C^*$-algebra. Then $A$ is in particular a $(\G, Y)$-$C^*$-algebra. Given a $(\G,Y)$-boundary map $\psi$ on $A$, by $\G$-injectivity, we may extend $\psi$ to a $\G$-equivariant ucp map $\tilde\psi: C^*_{\pi\times\rho}(\G,X) \to C(\partial(\G, X))$. Since $\psi(\iota(f))=f\circ\b_Y$ for every $f\in C(Y)$, then by rigidity, $\psi(\iota(f))=f\circ\b_X$ for every $f\in C(X)$, and so $\tilde\psi$ is a $(\G,X)$-boundary map, hence faithful. This implies by Proposition~\ref{prop:ideal1} that $A$ is simple.
\end{proof}

The following is a consequence of Corollary~\ref{cor:os} and Theorems~\ref{thm:gk} and~\ref{thm:C*irr}.
\begin{thm}
Let $Y$ be a minimal compact $\G$-space and $C(Y)\subset C(X) \subset C(\partial(\G, Y))$ be an inclusion of $\G$-$C^*$-algebras. Then the following inclusions are $C^*$-irreducible:
\begin{enumerate}
\item[(i)]For every $x\in\Xoo$,  the inclusion $$C^*_{\la_{\G/\G_x^0}\times\rho_x}(\G,Y) \subset C^*_{\la_{\G/\G_x^0}\times\rho_x}(\G,X).$$
\item[(ii)]For every $x\in X$ such that $\frac{\G_x}{\G_x^0}$ is amenable, the inclusion $$C^*_{\la_{\G/\G_x}\times\rho_x}(\G,Y) \subset C^*_{\la_{\G/\G_x}\times\rho_x}(\G,X).$$
\end{enumerate}
\end{thm}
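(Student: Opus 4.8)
The plan is to derive this statement as a direct application of Theorem~\ref{thm:C*irr}, verifying its two hypotheses — that the relevant representation is germinal, and that the ambient $C^*$-algebra over $X$ is simple — for each of the two families of representations, and supplying the simplicity from Corollary~\ref{cor:os} and Theorem~\ref{thm:gk} respectively.

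First I would record that $X$ is itself a minimal compact $\G$-space. Indeed, since $Y$ is minimal, $\partial(\G,Y)$ is minimal by Proposition~\ref{Kaw3.3}(vi), and the inclusions $C(Y)\subset C(X)\subset C(\partial(\G,Y))$ correspond to continuous equivariant surjections $\partial(\G,Y)\to X\to Y$; a continuous equivariant image of a minimal space is minimal, so $X$ is minimal. This observation is essential, because both Corollary~\ref{cor:os} and Theorem~\ref{thm:gk} are stated for representations over a \emph{minimal} locally compact $\G$-space.

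For part (i), I would fix $x\in\Xoo$ (the continuity set of $\Stab^0$ for the action $\G\act X$). The representation $(\la_{\G/\G_x^0},\rho_x)$ of $(\G,X)$ is germinal by Example~\ref{ex:grprep} (taking $H=\G_x^0$, which trivially satisfies $\G_x^0\leq H\leq\G_x$), and $C^*_{\la_{\G/\G_x^0}\times\rho_x}(\G,X)$ is simple by Corollary~\ref{cor:os}. Hence both hypotheses of Theorem~\ref{thm:C*irr} hold for $(\pi,\rho)=(\la_{\G/\G_x^0},\rho_x)$, and its conclusion yields that $C^*_{\la_{\G/\G_x^0}\times\rho_x}(\G,Y)\subset C^*_{\la_{\G/\G_x^0}\times\rho_x}(\G,X)$ is $C^*$-irreducible. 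For part (ii), I would fix $x\in X$ with $\frac{\G_x}{\G_x^0}$ amenable; then $(\la_{\G/\G_x},\rho_x)$ is germinal by Example~\ref{ex:grprep} (now with $H=\G_x$), and $C^*_{\la_{\G/\G_x}\times\rho_x}(\G,X)$ is simple by Theorem~\ref{thm:gk}, so Theorem~\ref{thm:C*irr} again applies verbatim.

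I expect no genuine obstacle: the statement is essentially a bookkeeping corollary of the three cited results. The only points that require care are the implicit verification that $X$ is minimal (so that the two simplicity results are applicable) and the matching of conventions — namely that $(\pi,\rho)$ in Theorem~\ref{thm:C*irr} is a single covariant representation of $(\G,X)$ whose restriction of $\rho$ to $C(Y)$ produces the intermediate algebra $C^*_{\pi\times\rho}(\G,Y)$, which is precisely the role played here by $(\la_{\G/\G_x^0},\rho_x)$ and $(\la_{\G/\G_x},\rho_x)$. Once these are in place, both inclusions are $C^*$-irreducible.
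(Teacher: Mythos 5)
Your proposal is correct and follows exactly the route the paper intends: the paper proves this theorem simply by citing Corollary~\ref{cor:os}, Theorem~\ref{thm:gk}, and Theorem~\ref{thm:C*irr}, and you carry out precisely that combination, checking germinality via Example~\ref{ex:grprep} and simplicity via the two cited results. Your explicit verification that $X$ is minimal (as an equivariant continuous image of the minimal space $\partial(\G,Y)$) is a detail the paper leaves implicit, and it is handled correctly.
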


\section{Simplicity of crossed products}

In this section, we investigate simplicity of crossed products associated to actions on locally compact spaces.

A locally compact $\G$-space $X$ is said to have the \emph{intersection property} if for any non-zero ideal $I\unlhd C_0(X)\rtimes_r\G$ we have that $I\cap C_0(X)\neq\{0\}$.

\begin{lem}\label{lem:intp} Let $Y$ be a locally compact $\G$-space and $X$ an open $\G$-invariant subset of $Y$.  Consider the following properties.
\begin{enumerate}
\item[(i)]The space $Y$ has the intersection property.
\item[(ii)] The space $X$ has the intersection property.
\end{enumerate}
Then (i)$\implies$(ii).  If $X$ is dense in $Y$, then (ii)$\implies$(i).
\end{lem}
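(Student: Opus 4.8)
The plan is to work at the level of the crossed product $C_0(Y)\rtimes_r\G$ and exploit the fact that $X$ being open and $\G$-invariant gives a canonical inclusion $C_0(X)\rtimes_r\G \unlhd C_0(Y)\rtimes_r\G$ as an ideal, compatibly with the canonical conditional expectations onto $C_0(X)$ and $C_0(Y)$. The key structural observation is that $C_0(X)$ sits inside $C_0(Y)$ as the ideal of functions vanishing off $X$, and since $X$ is $\G$-invariant this ideal is $\G$-invariant, so $C_0(X)\rtimes_r\G$ is an ideal of $C_0(Y)\rtimes_r\G$. I would set up both canonical conditional expectations $E_Y\colon C_0(Y)\rtimes_r\G\to C_0(Y)$ and $E_X\colon C_0(X)\rtimes_r\G\to C_0(X)$ and note that $E_Y$ restricts to $E_X$ on the ideal $C_0(X)\rtimes_r\G$.

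For the implication (i)$\implies$(ii), I would start with a non-zero ideal $I\unlhd C_0(X)\rtimes_r\G$. The natural move is to consider the ideal $J$ of $C_0(Y)\rtimes_r\G$ generated by $I$. Since $C_0(X)\rtimes_r\G$ is itself an ideal of $C_0(Y)\rtimes_r\G$, one checks that $J\cap (C_0(X)\rtimes_r\G) = I$ (an ideal of an ideal, when saturated inside the larger algebra, recovers itself upon re-intersection). Applying the intersection property of $Y$ to $J$ yields $J\cap C_0(Y)\neq\{0\}$. I would then argue that this intersection actually lands in $C_0(X)$: a positive element of $J\cap C_0(Y)$ can be approximated/compressed using an approximate unit of $C_0(X)$ so that $I\cap C_0(X)\neq\{0\}$. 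The cleanest route here is probably to use that $C_0(X)$ is an ideal of $C_0(Y)$ and that $J$ lies in $C_0(X)\rtimes_r\G$ up to the ideal structure, so $J\cap C_0(Y) = J\cap C_0(X)\subset I\cap C_0(X)$.

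For the converse (ii)$\implies$(i) under the density hypothesis, I would take a non-zero ideal $I\unlhd C_0(Y)\rtimes_r\G$ and consider $I\cap (C_0(X)\rtimes_r\G)$. The main point is to show this intersection is non-zero: because $X$ is dense in $Y$, the ideal $C_0(X)\rtimes_r\G$ is an \emph{essential} ideal of $C_0(Y)\rtimes_r\G$, so it meets every non-zero ideal non-trivially. Granting this, $I\cap(C_0(X)\rtimes_r\G)$ is a non-zero ideal of $C_0(X)\rtimes_r\G$, and applying the intersection property of $X$ gives a non-zero element of $(I\cap C_0(X)\rtimes_r\G)\cap C_0(X) \subset I\cap C_0(X)\subset I\cap C_0(Y)$, which is what we want.

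The main obstacle I anticipate is establishing that $C_0(X)\rtimes_r\G$ is an essential ideal of $C_0(Y)\rtimes_r\G$ when $X$ is dense, and more generally keeping careful track of the compatibility of the reduced crossed product construction with the passage between $X$ and $Y$. Essentiality should follow from the fact that $C_0(X)$ is essential in $C_0(Y)$ (density of $X$) together with faithfulness of the conditional expectation $E_Y$: if an ideal $I$ is orthogonal to $C_0(X)\rtimes_r\G$, then applying $E_Y$ to products shows $E_Y(I)$ is orthogonal to $C_0(X)$ in $C_0(Y)$, forcing $E_Y(I)=0$ and hence $I=0$ by faithfulness. I would want to verify this argument handles the non-unital subtleties correctly, using approximate units for $C_0(X)$ in place of a unit throughout, which is exactly why the $C_0$ rather than $C$ setting requires a little extra care.
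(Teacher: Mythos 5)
Your proof is correct and follows essentially the same route as the paper: both directions rest on the fact that $C_0(X)\rtimes_r\G$ is an ideal of $C_0(Y)\rtimes_r\G$, together with the faithfulness and $C_0(Y)$-bimodule property of the canonical conditional expectation and the observation that density of $X$ makes $C_0(X)$ essential in $C_0(Y)$. The only cosmetic difference is that you package the key step of (ii)$\implies$(i) as an essentiality statement for the ideal $C_0(X)\rtimes_r\G$ and argue directly, whereas the paper runs the identical computation (apply $E$ to products $af$ with $f\in C_0(X)$) as a proof by contradiction.
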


\begin{proof}
The fact that (i)$\implies$(ii) follows immediately from the fact that $C_0(X)\rtimes_r\G$ is an ideal of $C_0(Y)\rtimes_r\G$.

Assume $X$ has the intersection property and is dense in $Y$; we will show that also $Y$ has the intersection property.  

Suppose there exists a non-zero ideal $I\unlhd C_0(Y)\rtimes_r\G$ such that $I\cap C_0(Y)=\{0\}$. In particular, $I\cap C_0(X)=\{0\}$. Since $X$ has the intersection property, it implies 
\begin{equation}\label{eq:ideal}
I\cap (C_0(X)\rtimes_r\G)=\{0\}.
\end{equation}  
Let $E\colon C_0(Y)\rtimes_r \G\to C_0(Y)$ be the canonical conditional expectation.  Since $E$ is faithful and $X$ is dense in $Y$, there is $a\in I$ and $f\in C_0(X)$ such that $E(af)=E(a)f\neq 0$, which contradicts \eqref{eq:ideal}.
\end{proof}

Denote by $\mathcal{S}_a(\G)$ the set of amenable subgroups of $\G$, endowed with the Chabauty topology. Given a locally compact $\G$-space $X$, let $\mathcal{S}_a(X,\G):=\{(x,\La)\in X\times\mathcal{S}_a(\G):\La\leq\G_x\}$.  The group $\G$ acts on $\mathcal{S}_a(X,\G)$ by $g(x,\La):=(gx,g\La g^{-1})$, for $g\in\G$ and $(x,\La)\in\mathcal{S}_a(X,\G)$. Since $\mathcal{S}_a(\G)$ is compact, one can readily check that the map $p_X\colon \mathcal{S}_a(X,\G)\to X$ given by projecting on the first coordinate is closed.   

The following result extends some results from \cite{Kaw} which were shown in the case that the space is compact.
\begin{theorem}\label{thm:intp}
Let $X$ be a locally compact $\G$-space. The following conditions are equivalent:
\begin{enumerate}
\item[(i)]The space $X$ has the intersection property.
\item[(ii)] For every closed $\G$-invariant set $Y\subset \mathcal{S}_a(X,\G)$ such that $p_X(Y)=X$, the space $Y$ contains $X\times\{e\}$.
\item[(iii)] The crossed product $C_0(X)\rtimes_r\G$ admits a unique $(\G,X)$-boundary map.
\item[(iv)] The action $\G\act \partial(\G,X)$ is topologically free.
\end{enumerate}
\end{theorem}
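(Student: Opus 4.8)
The plan is to establish the four equivalences in the order $\mathrm{(i)}\Leftrightarrow\mathrm{(iii)}$, $\mathrm{(iii)}\Leftrightarrow\mathrm{(iv)}$, and $\mathrm{(ii)}\Leftrightarrow\mathrm{(iv)}$, since each adjacent pair is amenable to a direct argument while the bridging through boundary maps keeps the analytic and dynamical conditions cleanly separated. The key observation underpinning everything is that the canonical conditional expectation $E\colon C_0(X)\rtimes_r\G\to C_0(X)$ is itself a $(\G,X)$-boundary map (via the embedding $C_0(X)\hookrightarrow C_0(\partial(\G,X))$ given by $\widetilde{\b_X}$), so uniqueness of the boundary map is a statement that $E$ is the \emph{only} such map.

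First I would prove $\mathrm{(iii)}\Rightarrow\mathrm{(i)}$ and $\mathrm{(i)}\Rightarrow\mathrm{(iii)}$. For the forward direction, given a non-zero ideal $I$ with $I\cap C_0(X)=\{0\}$, Proposition~\ref{prop:ideal1} (together with Proposition~\ref{prop:ideal}) supplies a boundary map $\psi$ on $C_0(X)\rtimes_r\G$ with $I\subset I_\psi$; if the boundary map were unique it would equal $E$, whose associated left ideal $I_E$ is $\{0\}$ because $E$ is faithful, forcing $I=\{0\}$ and giving the intersection property. Conversely, assuming the intersection property, let $\psi$ be any $(\G,X)$-boundary map; then $I_\psi$ is an ideal by Proposition~\ref{prop:ideal}, and since $C_0(X)$ lies in the multiplicative domain of $\psi$ one checks $I_\psi\cap C_0(X)=\{0\}$, so the intersection property forces $I_\psi=\{0\}$, i.e. $\psi$ is faithful; a faithful boundary map extending $\widetilde{\b_X}$ on $C_0(X)$ must coincide with $E$ by a standard Cauchy--Schwarz/multiplicative-domain argument.

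Next I would handle $\mathrm{(iii)}\Leftrightarrow\mathrm{(iv)}$. The direction $\mathrm{(iv)}\Rightarrow\mathrm{(iii)}$ is where Proposition~\ref{supp} does the work: if $\psi$ is any boundary map, one passes to the extension on $C^*_{\pi\times\tilde\rho}(\G,\X)$ and Proposition~\ref{supp} shows $\supp\psi(\pi(g))\subset\overline{\b_X^{-1}(\interior X^g)}$; topological freeness of $\partial(\G,X)$ means $\interior(\partial(\G,X))^g=\emptyset$, and combined with $\b_X$ being open on appropriate sets (Lemma~\ref{lem:int} via minimality, or directly using that $\partial(\G,X)$ is minimal when $X$ is, by Proposition~\ref{Kaw3.3}(vi)) this forces $\psi(\pi(g))=0$ for $g\neq e$, pinning $\psi$ down to $E$. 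For $\mathrm{(iii)}\Rightarrow\mathrm{(iv)}$ I would argue contrapositively: if some $g\neq e$ has $\interior(\partial(\G,X))^g\neq\emptyset$, this open fixed set lets one build a second boundary map differing from $E$ on $\pi(g)$, for instance by using a germinal representation argument or by exhibiting a non-trivial equivariant map supported on that fixed set.

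Finally, $\mathrm{(ii)}\Leftrightarrow\mathrm{(iv)}$ is the genuinely dynamical equivalence and the step I expect to be the main obstacle, since it translates topological freeness of $\partial(\G,X)$ into a condition on closed invariant subsets of the amenable-subgroup bundle $\mathcal{S}_a(X,\G)$. The strategy is to use Proposition~\ref{Kaw3.3}(iii), which says stabilizers $\G_y$ for $y\in\partial(\G,X)$ are amenable with $\G_y=\G_y^0$, so the assignment $y\mapsto(\b_X(y),\G_y)$ maps $\partial(\G,X)$ into $\mathcal{S}_a(X,\G)$ equivariantly; its image closure is a candidate $Y$ with $p_X(Y)=X$ by Proposition~\ref{Kaw3.3}(ii)-type surjectivity. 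Topological freeness of $\partial(\G,X)$ is exactly the statement that the generic stabilizer is trivial, which should correspond to every such minimal $Y$ meeting—and hence (by minimality of $\partial(\G,X)$ and the closedness of $p_X$) containing—the section $X\times\{e\}$. The delicate points will be (a) controlling the Chabauty limits of stabilizers along nets in $\partial(\G,X)$, where upper semicontinuity of $\Stab^0$ and the density of $\Xoo$ enter, and (b) ensuring that a closed invariant $Y$ with $p_X(Y)=X$ but $Y\not\supset X\times\{e\}$ produces a non-trivial element of some $\G_y$, $y\in\partial(\G,X)$, witnessing failure of topological freeness; I would lift such a $Y$ to $\partial(\G,X)$ using $\G$-injectivity and the rigidity of the boundary to extract the offending group element.
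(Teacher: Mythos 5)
Your proposal attempts to reprove the whole theorem from first principles, whereas the paper's proof is a reduction: for compact $X$ all four equivalences are precisely Kawabe's results (\cite[Theorems 3.4, 4.2 and 5.2]{Kaw}), which the paper simply cites, and the non-compact case is reduced to the compact case of $\X$ via Lemma~\ref{lem:intp} (the intersection property passes between $X$ and a compactification in which $X$ is dense), a direct comparison of closed invariant subsets of $\mathcal{S}_a(X,\G)$ and $\mathcal{S}_a(\X,\G)$, and Proposition~\ref{prop:ext} (unique extension of $(\G,X)$-boundary maps to $(\G,\X)$-boundary maps). Re-deriving Kawabe's theorems would be acceptable if the derivations were complete, but yours are not, and the gaps sit exactly where the real work lies.

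Concretely: (1) In (i)$\Rightarrow$(iii) you claim that a \emph{faithful} $(\G,X)$-boundary map extending $\widetilde{\b_X}$ ``must coincide with $E$ by a standard Cauchy--Schwarz/multiplicative-domain argument.'' This implication is false as a standalone principle: take $\G=\Z/2$ acting trivially on a point $X$; then $\partial(\G,X)=\partial_F(\Z/2)$ is a point, every state on $C(X)\rtimes_r\G\cong C^*_r(\Z/2)\cong\C^2$ is a $(\G,X)$-boundary map (equivariance is vacuous), and there are many faithful states besides the trace $E$. Getting from ``all boundary maps are faithful'' to ``the boundary map is unique'' requires exactly the dynamical input of condition (iv), i.e.\ the substance of Kawabe's Theorems 3.4 and 4.2, which cannot be dismissed as routine. (2) You invoke Proposition~\ref{supp}, Lemma~\ref{lem:int} and Proposition~\ref{prop:ideal1}, all of which assume $X$ \emph{minimal}, but Theorem~\ref{thm:intp} carries no minimality hypothesis; Proposition~\ref{supp} genuinely uses minimality (through Lemma~\ref{lem:int}), so your (iv)$\Rightarrow$(iii) argument does not apply to a general locally compact $\G$-space. (3) The implication (iii)$\Rightarrow$(iv) and the entire equivalence (ii)$\Leftrightarrow$(iv) are left as programmatic sketches (``build a second boundary map \dots for instance by using a germinal representation argument,'' ``lift such a $Y$ to $\partial(\G,X)$ using $\G$-injectivity''), with the delicate points explicitly acknowledged but unresolved; these are precisely the hard parts of Kawabe's Theorems 4.2 and 5.2. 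What survives is essentially only (iii)$\Rightarrow$(i), and even there you should note that Proposition~\ref{prop:ideal1} is being used outside its stated hypotheses --- the argument does go through because $I\cap C_0(X)=\{0\}$ ensures the quotient $(C_0(X)\rtimes_r\G)/I$ is again a $(\G,X)$-$C^*$-algebra, so Proposition~\ref{Kaw3.3}(iv) applies to it.
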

\begin{proof}
If $X$ is compact, then the result follows from \cite[Theorems 3.4, 4.2 and 5.2]{Kaw}. So we assume that $X$ is not compact.

(i)$\implies$(ii). By Lemma~\ref{lem:intp}, $\X$ also has the intersection property. 

Let $Y$ be a closed $\G$-invariant subset of $\mathcal{S}_a(X,\G)$ such that $p_X(Y)=X$. Note that $W:=Y\cup \mathcal{S}_a(\X\setminus X,\G)$ is a closed $\G$-invariant subset of $\mathcal{S}_a(\X,\G)$ such that $p_{\X}(W)=\X$.  It follows from \cite[Theorem 5.2]{Kaw} that $\X\times\{e\}\subset W$, which implies that $X\times\{e\}\subset Y$.

(ii)$\implies$(i).  Let us show that $\X$ has the intersection property. By \cite[Theorem 5.2]{Kaw}, it suffices to show that, given $Y\subset \mathcal{S}_a(\X,\G)$ closed and $\G$-invariant such that $p_{\X}(Y)=Z$, the space $Y$ contains $\X\times\{e\}$.  Let $W:=Y\cap\mathcal{S}_a(X,\G)$. Since $p_X(Y)=X$, we conclude that $X\times\{e\}\subset W$. Since $X$ is dense in $\X$, it follows that $\X\times\{e\}\subset Y$.

(i)$\iff$(iii). This follows immediately from the corresponding fact for compact $\G$-spaces and Proposition~\ref{prop:ext}.  

(iv)$\iff$(i). This follows immediately from the corresponding fact for compact $\G$-spaces.
\end{proof}

The following result was proven in \cite[Theorem 6.1]{Kaw} for actions on compact spaces.
\begin{theorem}\label{thm:scp}
Let $X$ be a minimal locally compact $\G$-space. The following conditions are equivalent:
\begin{enumerate}
\item[(i)] The reduced crossed product $C_0(X)\rtimes_r\G$ is simple.
\item[(ii)] For every $x,z\in X$ and every amenable subgroup $\La$ of $\G_x$, there exists a net $(g_i)\subset\G$ such that $g_i\La g_i^{-1}\to\{e\}$ and $g_ix\to z$. 
\item[(iii)]There exists $x\in X$ such that for every amenable subgroup $\La$ of $\G_x$, there exists a net $(g_i)\subset\G$ such that $g_i\La g_i^{-1}\to\{e\}$ and $g_i x$ converges to a point in $X$.
\end{enumerate}
\end{theorem}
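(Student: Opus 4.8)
The plan is to prove Theorem~\ref{thm:scp} by reducing simplicity of $C_0(X)\rtimes_r\G$ to the intersection property established in Theorem~\ref{thm:intp}, together with the observation that for a minimal action $C_0(X)\rtimes_r\G$ is simple precisely when $X$ has the intersection property. Since $\G\act X$ is minimal, there are no non-trivial proper $\G$-invariant ideals coming from $C_0(X)$ itself, so the intersection property forces every non-zero ideal $I$ to meet $C_0(X)$ non-trivially in a $\G$-invariant ideal, which by minimality must be all of $C_0(X)$, yielding simplicity. Conversely, simplicity obviously implies the intersection property. Thus (i) is equivalent to the intersection property, and my task becomes translating condition (ii) of Theorem~\ref{thm:intp}---that every closed $\G$-invariant $Y\subset\mathcal{S}_a(X,\G)$ with $p_X(Y)=X$ contains $X\times\{e\}$---into the dynamical approximation conditions (ii) and (iii) above.

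First I would establish the implication (i)$\implies$(ii). Fixing $x,z\in X$ and an amenable $\La\le\G_x$, I consider the closure $Y:=\overline{\G\cdot(x,\La)}$ of the orbit of $(x,\La)$ in $\mathcal{S}_a(X,\G)$. Using minimality of $X$ together with the fact that $p_X$ is closed (as stated in the paragraph preceding Theorem~\ref{thm:intp}, since $\mathcal{S}_a(\G)$ is compact), I would argue that $p_X(Y)$ is a non-empty closed $\G$-invariant subset of $X$, hence equals $X$. By the intersection property and Theorem~\ref{thm:intp}(ii), $Y\supset X\times\{e\}$; in particular $(z,\{e\})\in Y=\overline{\G\cdot(x,\La)}$. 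Unwinding what convergence in the product $X\times\mathcal{S}_a(\G)$ means, this produces a net $(g_i)$ with $g_i x\to z$ and $g_i\La g_i^{-1}\to\{e\}$ in the Chabauty topology, which is exactly condition (ii).

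Next, (ii)$\implies$(iii) is immediate: pick any $x\in X$, and for each amenable $\La\le\G_x$ apply (ii) with an arbitrary fixed $z\in X$ to get a net with $g_i x\to z\in X$. The substantive remaining direction is (iii)$\implies$(i), which I would prove by verifying condition (ii) of Theorem~\ref{thm:intp} for the one-point compactification or directly for $X$. Let $Y\subset\mathcal{S}_a(X,\G)$ be closed, $\G$-invariant, with $p_X(Y)=X$; I must show $X\times\{e\}\subset Y$. Since $p_X(Y)=X$, the fixed $x$ from (iii) satisfies $(x,\La)\in Y$ for some amenable $\La\le\G_x$ (and by passing to the largest such, or by a closedness argument, I can assume $\La$ is the fibre over $x$). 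Applying the hypothesis in (iii) to this $\La$, I obtain a net $(g_i)$ with $g_i\La g_i^{-1}\to\{e\}$ and $g_i x\to w$ for some $w\in X$. Then $g_i(x,\La)=(g_i x, g_i\La g_i^{-1})\to(w,\{e\})$, and since $Y$ is closed and $\G$-invariant, $(w,\{e\})\in Y$. By minimality the orbit closure of $(w,\{e\})$ projects onto all of $X$ and its points all have trivial second coordinate, giving $X\times\{e\}\subset Y$ after taking the orbit closure of $(w,\{e\})$ inside $Y$.

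The main obstacle I anticipate is the last step: upgrading a single point $(w,\{e\})\in Y$ to the full slice $X\times\{e\}\subset Y$. The difficulty is that $\G$-invariance and closedness give me $\overline{\G\cdot(w,\{e\})}\subset Y$, but I need to know that this orbit closure contains $(z',\{e\})$ for \emph{every} $z'\in X$, i.e. that the trivial-subgroup slice is itself minimal under the $\G$-action on $\mathcal{S}_a(X,\G)$. This should follow because the second coordinate $\{e\}$ is a global fixed point for the conjugation action (conjugating the trivial group gives the trivial group), so the orbit closure of $(w,\{e\})$ is exactly $\overline{\G w}\times\{e\}=X\times\{e\}$ by minimality of $X$. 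I would need to confirm that the Chabauty topology indeed isolates the behaviour so that no larger subgroups creep into the closure of this slice, but since $\{e\}$ is fixed and $X$ is minimal this is clean. Throughout, the reduction to Theorem~\ref{thm:intp} does the heavy lifting, so the proof is essentially a careful dynamical translation of condition (ii) of that theorem.
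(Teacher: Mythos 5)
Your proposal is correct and follows essentially the same route as the paper's proof: reduce simplicity to the intersection property via minimality, prove (i)$\implies$(ii) by taking the orbit closure of $(x,\La)$ in $\mathcal{S}_a(X,\G)$ and invoking Theorem~\ref{thm:intp}, and prove (iii)$\implies$(i) by producing a point of $(X\times\{e\})\cap Y$ and using minimality (your observation that $\{e\}$ is fixed under conjugation, so the orbit closure of $(w,\{e\})$ is $\overline{\G w}\times\{e\}=X\times\{e\}$, is exactly the justification the paper leaves implicit). The only superfluous element is your parenthetical about passing to a ``largest'' amenable subgroup over $x$: since $p_X(Y)=X$ already yields some $(x,\La)\in Y$ with $\La\leq\G_x$ amenable, no such normalization is needed.
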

\begin{proof}
Since $X$ is minimal, we have that $C_0(X)\rtimes_r\G$ is simple if and only if $X$ has the intersection property.

(i)$\implies$(ii). Given $x\in X$ and $\La\leq\G_x$ amenable, let $Y$ be the closure of the $\G$-orbit of $(x,\La)$ in $\mathcal{S}_a(\G,X)$. Since $p_X$ is a closed map and $X$ is minimal, we have $p_X(Y)=X$. By Theorem~\ref{thm:intp}, we conclude that $X\times\{e\}\subset Y$. Hence, there exists a net $(g_i)\subset\G$ such that $g_i\La g_i^{-1}\to\{e\}$ and $g_ix\to z$.

 (ii)$\implies$(iii) is immediate.

(iii)$\implies$(i).  We will use Theorem~\ref{thm:intp} to show that $X$ has the intersection property. Take $Y\subset \mathcal{S}_a(\G,X)$ closed and $\G$-invariant such that $p_X(Y)=X$.  Let $\La\leq\G_x$ amenable such that $(x,\La)\in Y$ and $(g_i)\subset\G$ a net such that $g_i\La g_i^{-1}\to\{e\}$ and $g_ix$ converges to a point in $X$.  In particular, $(X\times\{e\})\cap Y\neq\emptyset$. Since $X$ is minimal, we conclude that $X\times\{e\}\subset Y$.
\end{proof}
\begin{remark}
In \cite[Section 8]{CryNag}, Crytser and Nagy obtained some partial results about the simplicity of reduced crossed products of locally compact $\G$-spaces. 

Shortly after this paper appeared on arXiv, Kennedy, Kim, Li, Raum and Ursu obtained in \cite{KKLRU} a characterization of the intersection property for étale groupoids which generalizes Theorem~\ref{thm:intp}.
\end{remark} 
\begin{example}Given a $C^*$-simple group $\G$ and a minimal compact $\G$-space $X$, it follows from \cite[Theorem 7.1]{BKKO} that $C(X)\rtimes_r\G$ is simple. This is not necessarily true for actions on locally compact spaces.
Indeed, given a non-trivial group $\G$, take $\La$ a non-trivial amenable subgroup of $\G$. By \cite[Corollary 2.9]{Gre}, $c_0(\G/\La)\rtimes_r\G$ is isomorphic to $K(\ell^2(\G/\La))\otimes C^*_r(\La)$, hence not simple (see also Example~\ref{ex:amenable}).
\end{example}

The following result was proven in the compact case in \cite[Lemma 2.4]{Kaw}.

\begin{proposition}\label{prop:conv}
Let $X$ be a locally compact $\G$-space.  
\begin{enumerate}
\item[(i)] If $\{x\in X:\text{ $\G_x$ is $C^*$-simple}\}$ is dense in $X$, then $X$ has the intersection property.
\item[(ii)] If $X$ has the intersection property and $\G_x^0$ is amenable for all $x\in X$, then $X$ is topologically free.
\end{enumerate}
\end{proposition}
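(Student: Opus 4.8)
The plan is to deduce both parts from the equivalent formulations of the intersection property collected in Theorem~\ref{thm:intp}: for (i) I would use formulation (ii) of that theorem (the condition on closed invariant subsets of $\mathcal{S}_a(X,\G)$), and for (ii) formulation (iv) (topological freeness of the boundary action).

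For part (i), fix a closed $\G$-invariant set $Y\subseteq\mathcal{S}_a(X,\G)$ with $p_X(Y)=X$; by Theorem~\ref{thm:intp} it suffices to show $X\times\{e\}\subseteq Y$. For each $x\in X$ consider the slice $Y_x:=\{\La\in\mathcal{S}_a(\G):(x,\La)\in Y\}$. Since $p_X(Y)=X$ it is nonempty; it is closed, it consists of amenable subgroups of $\G_x$, and it is invariant under conjugation by $\G_x$ (because $g(x,\La)=(x,g\La g^{-1})\in Y$ whenever $g\in\G_x$). When $\G_x$ is $C^*$-simple, a nonempty closed $\G_x$-invariant subset of $Y_x$ that is minimal with respect to inclusion—which exists by compactness of $\mathcal{S}_a(\G_x)$ together with Zorn's lemma—is an amenable uniformly recurrent subgroup of $\G_x$, hence equal to $\{\{e\}\}$ by Kennedy's intrinsic characterization of $C^*$-simplicity \cite{Ken}; thus $\{e\}\in Y_x$, i.e.\ $(x,\{e\})\in Y$. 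Letting $D$ denote the (dense) set of points with $C^*$-simple stabilizer, this shows $D\times\{e\}\subseteq Y$; since $\{e\}$ is a closed point of $\mathcal{S}_a(\G)$ and $Y$ is closed, passing to closures yields $X\times\{e\}=\overline{D\times\{e\}}\subseteq Y$, as needed.

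For part (ii), by Theorem~\ref{thm:intp}(iv) the intersection property is equivalent to topological freeness of $\G\act\partial(\G,X)$, which—since $\G_y=\G_y^0$ for every $y\in\partial(\G,X)$ by Proposition~\ref{Kaw3.3}—is the same as \emph{freeness} of that action. Arguing by contraposition, suppose $\G\act X$ is not topologically free, so $\interior X^g\neq\emptyset$ for some $g\neq e$; fix $x\in\interior X^g$ and set $N:=\G_x^0$, a nontrivial amenable subgroup of $\G_x$ containing $g$. Since $\b_X$ is proper, the fiber $F:=\b_X^{-1}(x)$ is a nonempty compact space on which $N\leq\G_x$ acts. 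The target is to produce a point $y\in F$ with nontrivial isotropy (for instance an $N$-fixed point, giving $g\in\G_y$), which would contradict freeness of $\G\act\partial(\G,X)$ and thereby contradict the intersection property.

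The main obstacle is precisely this last step. Amenability of $N$ furnishes an $N$-invariant probability measure on the compact fiber $F$, but an amenable group acting on a compact space need not have a fixed point, so invariance by itself is not enough; the crux is to upgrade the invariant measure to a genuine $N$-fixed boundary point. I expect this to be where the rigidity of the injective envelope must be used: $F$ sits inside the $\G$-injective, extremally disconnected space $\partial(\G,X)$, and the fact that $N$ acts trivially on the base over the open set $\interior X^g$ should force, via the essential/rigid extension property of $\cI_\G(C(\X))$ over $C(\X)$, that $N$ fixes a point of $\b_X^{-1}(\interior X^g)$ (morally, that $\G_x^0$ acts trivially on the germ part of the fiber). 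Making this rigidity argument precise—equivalently, showing that a nontrivial amenable open stabilizer forces nontrivial isotropy somewhere in $\partial(\G,X)$, rather than merely an invariant measure—is the technical heart of (ii); an alternative route worth attempting is to use formulation (iii) of Theorem~\ref{thm:intp} and instead construct directly a second $(\G,X)$-boundary map on $C_0(X)\rtimes_r\G$ that is nonzero on $\la_\G(g)$, contradicting uniqueness.
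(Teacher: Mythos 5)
Your argument for (i) is correct, but it takes a genuinely different route from the paper. The paper's proof is a two-line reduction: the compact case is Kawabe's result \cite[Lemma 2.4(i)]{Kaw} applied to $\X$ (a dense subset of $X$ with $C^*$-simple stabilizers stays dense in $\X$), and Lemma~\ref{lem:intp} then transfers the intersection property from $\X$ down to the open dense invariant subset $X$. You instead verify criterion (ii) of Theorem~\ref{thm:intp} directly: the slice $Y_x=\{\La:(x,\La)\in Y\}$ is nonempty, closed, consists of amenable subgroups of $\G_x$, and is invariant under $\G_x$-conjugation; a minimal nonempty closed invariant subset of it is an amenable URS of $\G_x$, hence $\{\{e\}\}$ by Kennedy's theorem; and $\overline{D\times\{e\}}=X\times\{e\}\subset Y$ since $Y$ is closed and $\{e\}$ is a closed point of $\mathcal{S}_a(\G)$. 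All of these steps check out (in particular $\mathcal{S}_a(\G)$ is indeed Chabauty-closed, since a group all of whose finitely generated subgroups are amenable is amenable). What your route buys is self-containedness: it essentially reproves Kawabe's lemma in the locally compact setting instead of quoting it. The one caveat is that \cite{Ken} is stated for countable groups, whereas Proposition~\ref{prop:conv} (Section 6 of the paper) carries no countability hypothesis on the stabilizers $\G_x$; the implication you actually use ($C^*$-simplicity forces every amenable URS to be trivial) does hold for arbitrary discrete groups via freeness of $\G_x\act\partial_F\G_x$ from \cite{BKKO}, but that should be said or the statement restricted.

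\textbf{Part (ii).} Here there is a genuine gap, which you yourself flag: you reduce to producing a point of the fiber $F=\b_X^{-1}(x)$ with nontrivial isotropy, and you cannot. This reduction is not completable by any general fixed-point principle: an amenable group acting on a compact, even extremally disconnected, space with an invariant measure need not fix a point. For instance $\Z$ acts on $\beta\Z$ by translation; invariant measures exist by amenability, but no shift-invariant ultrafilter exists (partition $\Z$ into the three residue classes mod $3$: each class is moved off itself by the shift, so no ultrafilter can be invariant). Worse, the rigidity statement you hope for --- that $\G_x^0$ must act trivially on the part of $\partial(\G,X)$ lying over $\interior X^g$ --- is simply false without amenability: take $X$ a point and $\G$ a $C^*$-simple group, so that $\G_x^0=\G$, while $\G\act\partial_F\G$ is free by \cite{BKKO}. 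So amenability of $\G_x^0$ must enter in an essential, representation-theoretic way, not merely through an invariant measure on a fiber.

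The paper's proof avoids fibers and boundary points entirely. It sets $(\pi,\rho):=\bigoplus_{x\in X}(\la_{\G/\G_x^0},\rho_x)$, a germinal representation by Example~\ref{ex:grprep}. Amenability of each $\G_x^0$ together with Proposition~\ref{prop:prec} gives $(\la_{\G/\G_x^0},\rho_x)\prec(\la_\G,\rho_x)$, hence a canonical $*$-homomorphism $\varphi\colon C_0(X)\rtimes_r\G\to C^*_{\pi\times\rho}(\G,X)$. If $X$ is not topologically free, choose $g\neq e$ and $0\neq f\in C_0(X)$ with $\supp f\subset\interior X^g$; the germinal identity $\pi(g)\rho(f)=\rho(f)$ puts the nonzero element $\la_\G(g)f-f$ (nonzero because the canonical expectation sends it to $-f$) into $\ker\varphi$, while $\ker\varphi\cap C_0(X)=\{0\}$ since $\rho$ is faithful on $C_0(X)$. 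Hence the intersection property fails. I note that your fallback suggestion --- produce a second $(\G,X)$-boundary map and contradict Theorem~\ref{thm:intp}(iii) --- is workable and requires no minimality: compose any $(\G,X)$-boundary map on $C^*_{\pi\times\rho}(\G,X)$, which exists by Proposition~\ref{Kaw3.3}(iv), with $\varphi$; the composite annihilates $\la_\G(g)f-f$ while the canonical conditional expectation does not, so the two boundary maps differ. But carrying this out still requires constructing $\varphi$ from the germinal representation built out of the amenable open stabilizers, and that construction is exactly the ingredient missing from your write-up.
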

\begin{proof}
(i). This is an immediate consequence of \cite[Lemma 2.4.(i)]{Kaw} and Lemma~\ref{lem:intp}.

(ii).  Let $(\pi,\rho):=\bigoplus_{x\in X}(\la_{\G/\G_x^0},\rho_x)$. Since each $\G_x^0$ is amenable, there is a canonical $*$-homomorphism $\varphi\colon C_0(X)\rtimes_r\G\to C^*_{\pi\times\rho}(\G,X)$. If $X$ is not topologically free, then, since $(\pi,\rho)$ is a germinal representation, it follows immediately that $\varphi$ is not injective. Since $\ker \varphi\cap C_0(X)=\{0\}$, we conclude that $X$ does not have the intersection property.
\end{proof}
\begin{example}
Let $F$ be Thompson's group.  It is known that $F$ is non-amenable if and only if it is $C^*$-simple (\cite[Corollary 4.2]{LeM}).  

Clearly, $F\act(0,1)$ is not topologically free.  
It follows from \cite[Lemma 4.4]{CFP} that, for any $x\in \Z[1/2]\cap(0,1)$,  it holds that $F_x\simeq F\times F$. Therefore, we conclude from Proposition~\ref{prop:conv} that $F$ is non-amenable if and only if $C_0((0,1))\rtimes_r F$ is simple.
\end{example}


\end{document}